\newtheorem{theorem}{Theorem}[section]
\newtheorem{corollary}[theorem]{Corollary}
\newtheorem{lemma}[theorem]{Lemma}
\newtheorem{proposition}[theorem]{Proposition}
\theoremstyle{definition}
\newtheorem{example}[theorem]{Example}
\numberwithin{equation}{section}
\newcommand{\GL}{\mathrm{GL}}
\newcommand{\SL}{\mathrm{SL}}
\newcommand{\Sp}{\mathrm{Sp}}
\newcommand{\SO}{\mathrm{SO}}
\newcommand{\SU}{\mathrm{SU}}
\renewcommand{\dim}{\mathrm{dim}}
\newcommand{\PSL}{\mathrm{PSL}}
\newcommand{\PSU}{\mathrm{PSU}}
\newcommand{\PSp}{\mathrm{PSp}}
\newcommand{\PGL}{\mathrm{PGL}}
\newcommand{\PGaL}{\mathrm{P\Gamma L}}
\newcommand{\AGaL}{A\Gamma L}
\newcommand{\POm}{\mathrm{P \Omega}}
\newcommand{\A}{\mathrm{Alt}}
\newcommand{\Alt}{\mathrm{Alt}}
\renewcommand{\S}{\mathrm{Sym}}
\newcommand{\Sym}{\mathrm{Sym}}
\newcommand{\Aut}{\mathrm{Aut}}
\newcommand{\Out}{\mathrm{Out}}
\newcommand{\PG}{\mathrm{PG}}
\newcommand{\Z}{\mathrm{Z}}
\newcommand{\Dmc}{\mathcal{D}}
\newcommand{\Bmc}{\mathcal{B}}
\newcommand{\Pmc}{\mathcal{P}}
\newcommand{\e}{\epsilon}
\renewcommand{\leq}{\leqslant}
\renewcommand{\geq}{\geqslant}
\renewcommand{\mod}[1]{\ (\mathrm{mod}{\ #1})}
\newcommand{\imod}[1]{\allowbreak\mkern4mu({\operator@font mod}\,\,#1)}
\begin{document}
	\title[]{Almost simple groups of Lie type and symmetric designs with $\lambda$ prime}
	
	\author[S.H. Alavi]{Seyed Hassan Alavi}
	\thanks{Corresponding author: S.H. Alavi}
	\address{Seyed Hassan Alavi, Department of Mathematics, Faculty of Science, Bu-Ali Sina University, Hamedan, Iran.}
	\email{alavi.s.hassan@basu.ac.ir and  alavi.s.hassan@gmail.com (G-mail is preferred)}
	\author[M. Bayat]{Mohsen Bayat}
	\address{Mohsen Bayat, Department of Mathematics, Faculty of Science, Bu-Ali Sina University, Hamedan, Iran.}
	\email{m.bayat@sci.basu.ac.ir}
	\author[A. Daneshkhah]{Asharf Daneshkhah}
	\address{Asharf Daneshkhah, Department of Mathematics, Faculty of Science, Bu-Ali Sina University, Hamedan, Iran.}
	\email{adanesh@basu.ac.ir}

	\subjclass[]{05B05; 05B25; 20B25}%
	\keywords{Finite simple group of Lie type; symmetric design;  flag-transitive; point-primitive; automorphism group }
	\date{\today}%

\begin{abstract}
  In this article, we investigate symmetric $(v,k,\lambda)$ designs $\Dmc$ with $\lambda$ prime admitting flag-transitive and point-primitive automorphism groups $G$. We prove that if $G$ is an almost simple group with socle a finite simple group of Lie type, then $\Dmc$ is either the point-hyperplane design of a projective space $\PG_{n-1}(q)$, or it is of parameters  $(7,4,2)$, $(11,5,2)$, $(11,6,2)$ or $(45,12,3)$.
\end{abstract}

\maketitle
\section{Introduction}\label{sec:intro}

A \emph{symmetric $(v,k,\lambda)$ design} is an incidence structure $\Dmc=(\Pmc,\Bmc)$  consisting of a set $\Pmc$ of $v$ \emph{points} and a set $\Bmc$ of $v$ \emph{blocks} such that every point is incident with exactly $k$ blocks, and every pair of blocks is incident with exactly $\lambda$ points. If $2<k<v-1$, then  $\Dmc$ is called a \emph{nontrivial} symmetric design.
A \emph{flag} of $\Dmc$ is an incident pair $(\alpha,B)$, where $\alpha$ and $B$ are a point and a block of $\Dmc$, respectively. An \emph{automorphism} of a symmetric design $\Dmc$ is a permutation of the points permuting the blocks and preserving the incidence relation. An automorphism group $G$ of $\Dmc$ is called \emph{flag-transitive} if it is transitive on the set of flags of $\Dmc$. If $G$ acts primitively on the point set $\Pmc$, then $G$ is said to be \emph{point-primitive}. A group $G$ is said to be \emph{almost simple} with socle $X$ if $X\unlhd G\leq \Aut(X)$, where $X$ is a nonabelian simple group. Further definitions and notation can be found in Section~\ref{sec:defn} below.

The main aim of this paper is to study symmetric designs with $\lambda$ prime admitting a flag-transitive and point-primitive almost simple automorphism group with socle being a finite simple groups of Lie type. Recently, we have studied nontrivial symmetric $(v, k, \lambda)$ design with prime $k$ admitting flag-transitive almost simple automorphism groups~\cite{a:ABCD-PrimeRep}, and proved that such a design is either a projective space, or it has a parameters set $(11, 5, 2)$.  We are now interested in possible classification of symmetric $(v, k, \lambda)$ designs $\Dmc$ with $\lambda$ prime admitting a flag-transitive and point-primitive almost simple automorphism group $G$. We have already shown in \cite{a:ABD-Exp} that almost simple exceptional groups of Lie type give rise to no possible symmetric designs with $\lambda$ prime. In the present paper, we focus on the case where $G$ is an almost simple group with socle $X$ being a finite simple classical group of Lie type, and prove that $\Dmc$ is either the point-hyperplane design of a projective space $\PG_{n-1}(q)$, or it is of parameters  $(7,4,2)$, $(11,5,2)$, $(11,6,2)$ or $(45,12,3)$, and we give detailed information of these designs in Section \ref{sec:examples}.

\begin{theorem}\label{thm:main}
    Let $\Dmc$ be a nontrivial symmetric $(v, k, \lambda)$ design with $\lambda$ prime, and let $\alpha$ be a point of $\Dmc$. If $G$ is a flag-transitive and point-primitive automorphism group of $\Dmc$ of almost simple group of Lie type with socle $X$. Then $\Dmc$ is the point-hyperplane design of $\PG_{n-1}(q)$ with $\lambda=(q^{n-2}-1)/(q-1)$ prime and $X=\PSL_{n}(q)$, or $\Dmc$ and $G$ are as in {\rm Table \ref{tbl:main}}.
\end{theorem}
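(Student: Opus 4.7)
The plan is to fix $G$ almost simple with socle $X$ a finite simple group of Lie type, set $H=G_\alpha$ (which is maximal in $G$ by point-primitivity), and exploit the standard design identities $k(k-1)=\lambda(v-1)$, $v=[G:H]$, together with flag-transitivity, which forces $k\mid |H|$ and that the stabilizer $H$ acts transitively on the $k$ blocks through $\alpha$. Since the exceptional Lie-type case has already been dealt with in \cite{a:ABD-Exp}, I may assume $X$ is a finite classical simple group. The proof then reduces to running through the maximal subgroups of classical almost simple groups via Aschbacher's theorem, and eliminating all candidates for $H$ except those producing $\PG_{n-1}(q)$ or one of the four small designs listed in Table~\ref{tbl:main}.

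The primality of $\lambda$ is the main leverage. From $\lambda(v-1)=k(k-1)$ and $\gcd(k,k-1)=1$, we immediately get either $\lambda\mid k$ or $\lambda\mid(k-1)$; in the former case $k=\lambda m$ with $m\mid(v-1)$, and in the latter $k-1=\lambda m$ with $m\mid(v-1)$. Combined with Fisher-type inequalities ($k^{2}>\lambda v$ for nontrivial designs) and the divisibility $k\mid\lambda(v-1)$ together with $k\mid|H|_{\alpha\text{-orbit}}$, this gives strong arithmetic constraints: in practice $k$ is forced to be close to $\sqrt{\lambda v}$, and one can compare against $|H|$ using $k\leq|H|$ and $k\mid\gcd(|H|,\lambda(v-1))$. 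I would also use the fact that under flag-transitivity $k$ divides the subdegrees of $G$ on $\Pmc$, i.e.\ the length of any $H$-orbit on $\Pmc\setminus\{\alpha\}$, which is a classical tool of Dembowski/O'Nan type arguments.

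The case analysis proceeds through the Aschbacher classes $\mathcal{C}_{1},\dots,\mathcal{C}_{8}$ and the class $\mathcal{S}$ of $H$. For $\mathcal{C}_{1}$ (parabolic subgroups), $v$ is the number of points of a projective subspace, and I expect the projective-space family $\PG_{n-1}(q)$ with $\lambda=(q^{n-2}-1)/(q-1)$ to emerge precisely here, using that this $\lambda$ being prime is a known arithmetic condition. For each of the geometric classes $\mathcal{C}_{2}$--$\mathcal{C}_{8}$ one has explicit formulas for $|H|$ and $v$, and the aim is to show that no prime $\lambda$ solves $k(k-1)=\lambda(v-1)$ with $k\mid|H|$, $k<v-1$, except in the small degenerate situations that produce the four parameter triples $(7,4,2)$, $(11,5,2)$, $(11,6,2)$, $(45,12,3)$; these should surface for small $q$ and small-rank $X$ (typically $\PSL_{2}$, $\PSL_{3}$, $\PSU$, $\PSp_{4}$) where $H$ happens to be a subfield, imprimitive, or stabilizer-of-a-form subgroup. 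For class $\mathcal{S}$ I would invoke the bound $|H|<|X|^{c}$ for small $c$ (e.g.\ Liebeck's bound) to produce $v>|H|^{2}$, contradicting $k\leq|H|$ and $k^{2}>\lambda v\geq 2v$.

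The hard part, as usual for this kind of theorem, is not the parabolic class (which is essentially algebraic) nor class $\mathcal{S}$ (which succumbs to order bounds), but the middle geometric classes $\mathcal{C}_{2}$, $\mathcal{C}_{5}$ and $\mathcal{C}_{8}$, where $|H|$ is large enough that the order bound fails and the combinatorics must be done by hand. In these classes $v$ and $|H|$ are given by explicit but intricate polynomials in $q$, and ruling out the existence of a prime $\lambda$ satisfying all the divisibility conditions typically requires a detailed Zsigmondy-type analysis of primitive prime divisors of $q^{i}-1$, together with a separate treatment of the small-$q$ exceptions (where the sporadic parameters of Table~\ref{tbl:main} will appear and must be verified to arise from genuine designs as per Section~\ref{sec:examples}). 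I anticipate that the bookkeeping for $X=\PSL_{n}(q)$ with $H$ of type $\GL_{a}(q)\wr\Sym_{n/a}$ or $\GL_{n}(q^{1/r})$ will be the single most delicate part of the argument.
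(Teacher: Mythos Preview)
Your broad strategy is sound, but you are missing the single most important observation that makes the paper's proof tractable, and without it your proposed Aschbacher case-split would be far longer and in places genuinely difficult to complete.

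The key point is this: since $\lambda$ is an odd prime (the cases $\lambda=2,3$ being already classified in the literature) and $\lambda(v-1)=k(k-1)$ is even, $v$ is \emph{odd}. The paper therefore does not run through all of $\mathcal{C}_1,\ldots,\mathcal{C}_8,\mathcal{S}$; instead it invokes the Liebeck--Saxl classification of primitive permutation groups of odd degree (Lemma~\ref{lem:odd-deg}). For classical $X$ this cuts the list of possible $H$ down to: parabolic subgroups (only when $q$ is even), stabilisers of nonsingular subspaces, stabilisers of orthogonal decompositions, subfield subgroups $N_G(X(q_0))$ with $q=q_0^t$ and $t$ odd prime, and a handful of sporadic small cases. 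In particular $\mathcal{C}_3$, $\mathcal{C}_4$, $\mathcal{C}_6$, $\mathcal{C}_7$, most of $\mathcal{C}_8$, and all of $\mathcal{S}$ simply never arise, and your anticipated ``hard middle'' is largely absent. Your remark that class $\mathcal{S}$ succumbs to Liebeck's order bound is optimistic for small $n$; the paper avoids the issue entirely.

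Two smaller corrections. First, flag-transitivity gives $k\mid \lambda d$ for every nontrivial subdegree $d$, not $k\mid d$; the factor $\lambda$ matters throughout the estimates. Second, the four small designs in Table~\ref{tbl:main} do not arise from $\mathcal{C}_2$/$\mathcal{C}_5$/$\mathcal{C}_8$ subgroups as you guess: the $\PSL_2(7)$ and $\PSL_2(11)$ examples have $H\cap X\cong\Sym_4$ or $\Alt_5$, and the $(45,12,3)$ example has $X=\PSU_4(2)$; these are picked up in the paper via the cited low-dimensional classifications and the $\lambda\in\{2,3\}$ literature rather than inside the main case analysis. Within each surviving case the paper's arithmetic is close to what you sketch (subdegree divisibility, Tits' lemma to control $p$-parts, and the inequality $\lambda v<k^2$), so once you insert the odd-degree reduction your plan aligns well with the actual proof.
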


Despite of the case where $k$ is prime, even in symmetric designs with $\lambda$ prime, flag-transitivity  does not necessarily imply point-primitivity. One of these examples arose from studying flag-transitive biplanes (symmetric designs with $\lambda=2$). It is known that there are only three non-isomorphic symmetric designs with parameters $(16,6,2)$, two of which admit flag-transitive and point-imprimitive design and one is not flag-transitive. The next interesting examples are the symmetric designs with parameters $(45,11,3)$. Indeed, Praeger \cite{a:Praeger-45-12-3} proves that there are only two examples of flag-transitive designs with parameters $(45,11,3)$. One is point-primitive and related to unitary geometry, while the other is point-imprimitive and constructed from a $1$-dimensional affine space for which we also  give an explicit base block in Section \ref{sec:examples} below. In general, Praeger and Zhou \cite{a:Praeger-imprimitive} study symmetric $(v,k,\lambda)$ designs admitting flag-transitive and point-imprimitive designs, and running through the potential parameters, we can only exclude one possibility, and so Corollary~\ref{cor:main} below is an immediate consequence of their result \cite[Theorem~1.1]{a:Praeger-imprimitive}.  To our knowledge, at this stage, any possible classification of flag-transitive and point-imprimitive designs with $\lambda$ prime seems to be out of reach.

\begin{table}
    \centering
    \caption{Parameters in Theorem~\ref{thm:main}}\label{tbl:main}
     \resizebox{\textwidth}{!}{
    \begin{tabular}{lllllllll}
        \noalign{\smallskip}\hline\noalign{\smallskip}
        Line &
        $v$ &
        $k$ &
        $\lambda$ &
        $X$ &
        $G$  &
        $G_{\alpha}$  &
        Designs &
        References$^{\ast}$ \\
        \noalign{\smallskip}\hline\noalign{\smallskip}
        $1$ &
        $7$ &
        $4$ &
        $2$ &
        $\PSL_{2}(7)$&
        $\PSL_{2}(7)$ &
        $\Sym_{4}$&
        Complement of Fano plane  &
        \cite{a:ABD-PSL2, b:Handbook} \\
        $2$ &
        $11$ &
        $5$ &
        $2$ &
        $\PSL_{2}(11)$&
        $\PSL_{2}(11)$ &
        $\Alt_{5}$&
        Hadamard &
        \cite{a:ABD-PSL2, b:Handbook} \\
        $3$ &
        $11$ &
        $6$ &
        $3$ &
        $\PSL_{2}(11)$&
        $\PSL_{2}(11)$ &
        $\Alt_{5}$&
        Complement of line $2$ &
        \cite{a:ABD-PSL2, b:Handbook} \\
        $4$ &
        $45$ &
        $12$ &
        $3$ &
        $\PSU_{4}(2)$  &
        $\PSU_{4}(2)$&
        $2{\cdot}(\Alt_{4}{\times} \Alt_{4}){\cdot} 2$ &
        -
        &\cite{a:Braic-2500-nopower,a:Dempwolff2001, a:Praeger-45-12-3}\\
        $5$ &
        $45$ &
        $12$ &
        $3$ &
        $\PSU_{4}(2)$  &
        $\PSU_{4}(2){:}2$&
        $2{\cdot}(\Alt_{4} {\times} \Alt_{4}).2{:}2$ &
        -
        &\cite{a:Braic-2500-nopower,a:Dempwolff2001, a:Praeger-45-12-3}\\
        \noalign{\smallskip}\hline\noalign{\smallskip}
        Note: &\multicolumn{8}{p{15cm}}{The last column addresses to references in which a design with the parameters in the line has been constructed.}
    \end{tabular}
}
\end{table}

\begin{corollary}\label{cor:main}
    Suppose that $\Dmc$ is a symmetric $(v,k,\lambda)$ design with $\lambda$ prime admitting flag-transitive and point-imprimitive automorphism group $G$. If $G$ leaves invariant a non-trivial partition $\mathcal{C}$ of $\Pmc$ with $d$ classes of size $c$, then there is a constant $l$ such that, for each $B\in \Bmc$ and $\Delta \in \mathcal{C}$,  $|B \cap\Delta|\in \{0,l\}$, and one of the following holds:
    \begin{enumerate}[\rm (a)]
        \item $k \leq \lambda(\lambda -3)/2$;
        \item $(v, k, \lambda) = (\lambda^2(\lambda +2), \lambda(\lambda + 1), \lambda)$ with $(c, d, l) = (\lambda^2,\lambda +2,\lambda)$ or $(\lambda +2,\lambda^2, 2)$;
        \item $(v, k, \lambda, c, d, l) = ((\lambda + 6)(\lambda^2+4\lambda-1)/4 , \lambda (\lambda+5)/2 , \lambda, \lambda+6, (\lambda^2+4\lambda-1)/4,3)$, where $\lambda\equiv 1$ or $3$ $\mod 6$.
    \end{enumerate}
\end{corollary}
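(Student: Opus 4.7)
The plan is to deduce the statement directly from \cite[Theorem~1.1]{a:Praeger-imprimitive}, which classifies the parameter sets $(v,k,\lambda)$ and the intersection structure for symmetric designs admitting a flag-transitive and point-imprimitive automorphism group preserving a partition $\mathcal{C}$ of $\Pmc$ into $d$ classes of size $c$. That theorem already supplies a constant $l$ with $|B\cap\Delta|\in\{0,l\}$ for every block $B\in\Bmc$ and every class $\Delta\in\mathcal{C}$, together with the general bound $k\leq\lambda(\lambda-3)/2$ and a short list of sporadic parametric families falling outside this bound. So no new combinatorial inequality has to be established; the task reduces to bookkeeping.

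First I would restate Praeger--Zhou's conclusion in the notation of the present paper, carefully matching the normalisations of $(v,k,\lambda,c,d,l)$ between the two sources. Next I would run through each family in their list in turn and impose the extra hypothesis that $\lambda$ is prime, checking for each family whether $\lambda$ can in fact be prime. The general bound in item (a) is unaffected by this hypothesis, and the two parametric families in items (b) and (c) survive: in (b) the parameters $(\lambda^{2}(\lambda+2),\lambda(\lambda+1),\lambda)$ are polynomial in $\lambda$ and admit infinitely many primes; in (c) the condition $\lambda\equiv 1$ or $3\imod{6}$ merely excludes $\lambda$ divisible by $2$ or $3$, so is compatible with $\lambda$ being prime (with $\lambda=3$ itself appearing as a boundary case).

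The key step is to isolate the single remaining family in \cite[Theorem~1.1]{a:Praeger-imprimitive} in which $\lambda$ is forced to be composite, and to eliminate it. For that family I would read off the expression for $\lambda$ in terms of the auxiliary parameters of the family and verify, by an elementary factorisation, that $\lambda$ always decomposes as a product of two integers each at least $2$, contradicting primality; this removes the family completely and leaves exactly the three conclusions (a), (b), (c) stated in Corollary~\ref{cor:main}.

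The main difficulty I anticipate is purely organisational rather than mathematical: aligning the parametric labels used by Praeger and Zhou with the variables $c,d,l$ of the corollary and checking that no borderline value of $\lambda$ in family (c) has been silently discarded (for example, $\lambda=3$ satisfying $\lambda\equiv 3\imod{6}$ but producing a non-integer design parameter must be verified explicitly). Once this accounting is done, the corollary follows as an immediate specialisation of \cite[Theorem~1.1]{a:Praeger-imprimitive} to the case of prime $\lambda$.
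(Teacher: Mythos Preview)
Your overall strategy matches the paper's exactly: invoke \cite[Theorem~1.1]{a:Praeger-imprimitive}, keep cases (a)--(c), and eliminate the one remaining family using the primality of $\lambda$. However, your guess about \emph{how} that family dies is off. The excluded case in Praeger--Zhou is not one in which $\lambda$ carries a built-in nontrivial factorisation; rather it is the family with $k=\lambda^{2}/2$ (and corresponding $v,c,d,l$). The contradiction is then a parity argument: for $k$ to be an integer one needs $2\mid\lambda^{2}$, hence $\lambda=2$ since $\lambda$ is prime, and then $k=2=\lambda$, violating $k>\lambda$ for a nontrivial design. So when you actually carry out your ``read off and factorise $\lambda$'' step you will not find a factorisation of $\lambda$, but rather this divisibility obstruction on $k$; adjust your write-up accordingly.
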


\subsection{Outline of proofs}\label{sec:outline}
In order to prove Theorem~\ref{thm:main} in Section~\ref{sec:Proof}, as noted above, by \cite[Corollary~1.2]{a:ADM-AS-CP}, we only need to consider the case where the socle $X$ of $G$ is a finite simple classical group. In particular, by \cite{a:ABD-PSL3,a:ABD-PSL4,a:ABD-PSL2,a:ABDZ-PSU4,a:DZ-PSU3},  in the case where $X$ is a linear or unitary group, we can assume that the  dimension of the underlying vector space is at least $5$. Moreover, we include all possible symmetric $(v,k,\lambda)$ designs for $\lambda=2,3$ obtained in \cite{a:Zhou-lam3-affine,t:Regueiro,a:Regueiro-classification} and therein references, and so we can also assume that $\lambda\geq 5$. If $\lambda$ is coprime to $k$, then the possible designs can be read off from  \cite[Corollary~1.2]{a:ABD-Exp}. Since $\lambda(v-1)=k(k-1)$,
we need to focus on the case where $\lambda$ divides $k$.
Since also $G$ is point-primitive, a point-stabiliser $H=G_\alpha$ is maximal in $G$. Note that $v=|G:H|$ is odd as $\lambda$ is odd prime and $\lambda(v-1)=k(k-1)$. Therefore, as a key tool, we use a classification of primitive permutation groups of odd degree \cite[Theorem]{a:Liebeck-Odd} which gives
the possible candidates for $H$. Another important and useful fact is that $k$ divides the order of $H$, and  so $\lambda$ is a prime divisor of $|H|$. At some stage, the knowledge of subdegrees (length of suborbits) of the $G$-action on the right cosets of $H$ in $G$ is essential. We now analyse each possibilities of $H$. Considering the fact that $k$ divides $\lambda(v-1)$ and if applicable $k$ also divides $\lambda d$ with $d$ a subdegree, we find a polynomial $f(q)$ of smallest possible degree for which $k$ divides $\lambda f(q)$. As $\lambda$ is a odd prime divisor of $|H|$, we find possible upper bounds $u_{\lambda}$. In most cases, we observe that $v< u_{\lambda} f(q)^2$ does not hold and this violates the fact that $\lambda v<k^2$. In some cases, the inequality $v< u_{\lambda} f(q)^2$ has some solutions, and these solutions suggest some parameters set that are needed to be argued as well. In the remaining cases, however, we need to use some other arguments and new techniques to settle down our claims. In this manner, Theorem \ref{thm:main} follows from Propositions~\ref{prop:psl}-\ref{prop:orth}.
The proof of Corollary~\ref{cor:main} is also given in Section~\ref{sec:Proof}, and the proof follows immediately from  \cite[Theorem~1.1]{a:Praeger-imprimitive} by ruling out one possible case.
In this paper, we use the software \textsf{GAP} \cite{GAP4} for computational arguments.

\subsection{Definitions and notation}\label{sec:defn}

All groups and incidence structures in this paper are finite. Symmetric and alternating groups on $n$ letters are denoted by $\S_{n}$ and $\A_{n}$, respectively.  We write ``$n$'' for a group of order $n$. Also for a given positive integer $n$ and a prime divisor $p$ of $n$, we denote the $p$-part of $n$ by $n_{p}$, that is to say, $n_{p}=p^{t}$ with $p^{t}\mid n$ but $p^{t+1}\nmid n$. For finite simple groups of Lie type, we adopt the standard notation as in \cite{b:Atlas}, and in particular, we use the standard notation to denote the finite simple classical groups, that is to say, $\PSL_{n}(q)$, for $n\geq 2$ and $(n,q)\neq (2,2), (2,3)$,
$\PSU_{n}(q)$,  for  $n\geq 3$  and  $(n,q)\neq (3,2)$,
$\PSp_{2m}(q)$,  for $n=2 m \geq 4$  and  $(m,q)\neq (2,2)$,
$\Omega_{2m+1}(q)=\POm_{2m+1}(q)$,  for  $n=2m+1\geq 7$  and  $q$  odd,
$\POm_{2m}^{\pm}(q)$,   for $n=2m\geq 8$. In this manner, the only repetitions are
\begin{align*}
\nonumber  &\PSL_{2}(4)\cong \PSL_{2}(5)\cong \A_{5}, & &
\PSL_{2}(7)\cong \PSL_{3}(2), & &
\PSL_{2}(9)\cong \A_{6},\\
&\PSL_{4}(2)\cong \A_{8}, & &\PSp_{4}(3)\cong \PSU_{4}(2).
\end{align*}

Recall that a symmetric design $\Dmc$ with parameters  $(v, k, \lambda)$ is a pair $(\Pmc,\Bmc)$, where $\Pmc$ is a set of $v$ points and $\Bmc$ is a set of $v$ blocks such that each block is a $k$-subset of $\Pmc$ and each two distinct points are contained in $\lambda$ blocks. We say that $\Dmc$ is nontrivial if $2 < k < v-1$.
Further notation and definitions in both design theory and group theory are standard and can be found, for example in~\cite{b:Beth-I,b:Atlas,b:Dixon,t:Kleidman,b:Lander}.

\section{Examples and Comments}\label{sec:examples}

In this section, we provide some examples of symmetric designs with $\lambda$ prime admitting a flag-transitive automorphism almost simple group with socle $X$. We remark here that the designs in Table~\ref{tbl:main} can be found in \cite{a:ABD-PSL2, a:ABDZ-PSU4}, but the construction given here is obtained by \textsf{ GAP} \cite{GAP4}.\smallskip

\begin{example}\label{ex:proj-space}
    The point-hyperplane of a projective space $\PG_{n-1}(q)$ with parameters  $((q^{n}-1)/(q-1),(q^{n-1}-1)/(q-1),(q^{n-2}-1)/(q-1))$ for $n\geq 3$ is a well-known example of flag-transitive symmetric designs. Any group $G$ with $\PSL_{n}(q) \leq G \leq \PGaL_{n}(q)$ acts flag-transitively on $\PG_{n-1}(q)$. If $n=3$, then we have the \emph{Desargusian plane} with parameters $(q^{2}+q+1,q+1,1)$ which is a projective plane. The design $\Dmc$ with parameters $(7, 4, 2)$ in line $1$ of Table~\ref{tbl:main} is the complement of the unique well-known symmetric design, namely, \emph{Fano Plane} admitting flag-transitive and point-primitive automorphism group $\PSL_{2}(7)\cong \PSU_{2}(7)$ with point-stabiliser $\Sym_{4}$.
\end{example}

\begin{example}\label{ex:hadamard}
    The symmetric $(11,5, 2)$ design is a \emph{Paley difference set} which is also a Hadamard design with the base block $\{ 1, 2, 3, 5, 11 \}$, and its full automorphism group is $\PSU_{2}(11)$ acting  flag-transitively and point-primitively. In this case, the point-stabiliser is isomorphic to $\Alt_{5}$. The complement of this design is the unique symmetric $(11,6,3)$ design whose full automorphism group $\PSU_{2}(11)$ is also flag-transitive and point-primitive with $\Alt_{5}$ as point-stabiliser.
\end{example}

\begin{example}\label{ex:16-6-2}
    There are exactly three non-isomorphic symmetric $(16,6,2)$ design, two of which are flag-transitive.
    The first symmetric design admitting a flag-transitive automorphism group is constructed from a difference set in $2^4$ whose automorphism group is $2^4~\Sym_6< 2^4~\GL_4(2)$ with point-stabiliser $\Sym_6$. The second example of  symmetric $(16,6,2)$ design admitting a flag-transitive automorphism group  arose from a difference set in $\Z_2\times \Z_8$ , and the point-stabiliser of order $48$ acts as the full group of symmetries of the cube, hence is a central extension $\Sym_4\circ 2$ of the symmetric group $\Sym_4$ by a group of order $2$. These two designs admit point-imprimitive automorphism group. The last symmetric $(16,6,2)$ design can be constructed as a difference set in $Q_{8} \times \Z_2$. The full automorphism group of order $16\cdot 24$ of this design is not flag-transitive.
\end{example}

\begin{example}\label{ex:other}
    Mathon and Spence \cite{a:Mathon-96} have constructed $3,752$ pairwise non-isomorphic symmetric $(45,12,3)$ designs, and they have shown that at least $1,136$ of these designs have a trivial automorphism group. Cheryl E. Praeger in \cite{a:Praeger-45-12-3} constructs two flag-transitive symmetric $(45,12,3)$ designs, and proves that these designs are the only two examples. One of these symmetric designs is related to unitary
    geometry and admits point-primitive automorphism group $\PSU_{4}(2)\cdot 2$, while the other has  point-imprimitive  automorphism group $G\leq {\rm \AGaL}_1(81)$. The base block of the former design is $\{ 1, 2, 4, 5, 12, 15, 17, 21, 28, 34,$ $35, 38 \}$, and more detailed information about this design can be found in \cite{a:Braic-2500-nopower,b:Handbook} and therein references. We here give an explicit base block for the point-imprimitive example. Let $G$ be a permutation group on the set $\Pmc:=\{1,\ldots,45\}$ generated by the permutations $\sigma_1$, \ldots, $\sigma_5$ below\smallskip
    
    \small
    \begin{tabular}{p{.5cm}p{13cm}}
        $\sigma_1:=$&$(1,2,4,5,3)( 6,16,43,13,14)( 7,39,33,45,26)( 8,21,37,32,28)( 9,11,25,35,10)$
        $( 12,44,24,40,17) (15,30,38,23,19) ( 18,34,20,31,41) ( 22,36,27,42,29)$, \\
        $\sigma_2:=$&$(1,5,2,3,4)( 6,10,16,9,43,11,13,25,14,35)( 7,40,39,17,33,12,45,44,26,24)$ $( 8,23,21,19,37,15,32,30,28,38)( 18,22,34,36,20,27,31,42,41,29)$, \\
        $\sigma_3:=$&$(2,5,3,4)( 6,17,32,20,11,26,23,29)( 7,30,42,43,12,21,34,35)$ $( 8,31,10,45,15,22,13,
        40)( 9,39,19,27,14,44,28,18)( 16,24,37,41,25,33,38,36)$, \\
        $\sigma_4:=$&$(2,3) ( 4,5) ( 6,32,11,23) ( 7,42,12,34) ( 8,10,15,13) ( 9,19,14,28)( 16,37,
        25,38)$ $( 17,20,26,29)( 18,39,27,44) ( 21,35,30,43) ( 22,40,31,45) ( 24,41,33,36)$, \\
        $\sigma_5:=$&$(1,6,11) ( 3,40,45) ( 4,41,36) ( 5,13,10) ( 8,35,
        39) ( 9,42,38) ( 14,37,34) ( 15,44,43)$ 
        $( 17,32,29)( 18,30,33) ( 20,23,26) ( 21,27,24)$. \\
    \end{tabular}
    \normalsize
    \smallskip
    
    \noindent Then $G\cong 3^4:(5:8)$ is isomorphic to a subgroup of ${\rm \AGaL}_1(81)$. The group $G$ has a subgroup $K\cong 3^2:8$ with an orbit of size $12$, namely, $B=\{1, 2, 3, 4, 6, 11, 19, 28, 36, 40, 41, 45\}$. Let now $\Bmc$ be the set of $G$-orbits $B^G$. Then $\Dmc=(\Pmc,\Bmc)$ forms a symmetric $(45,12,3)$ design with flag-transitive automorphism group $G$. Moreover, $C=\{1, 6, 11, 17, 20, 23, 26, 29, 32 \}$ is a $G$-invariant partition on $\Pmc$, and so $G$ is point-imprimitive. Note that the full automorphism group of $\Dmc$ is isomorphic to $3^4:(\SL_{2}(5):2)$ which is also point-imprimitive.
\end{example}

\section{Preliminaries}\label{sec:pre}

In this section, we state some useful facts in both design theory and group theory. Recall that a group $G$ is called almost simple if $X\unlhd G\leq \Aut(X)$, where $X$ is a (nonabelian) simple group.

\begin{lemma}\label{lem:New}{\rm \cite[Lemma 2.2]{a:ABD-PSL3}}
    Let $G$  be an almost simple group with socle $X$, and let $H$ be maximal in $G$ not containing $X$. Then $G=HX$ and $|H|$ divides $|\Out(X)|\cdot |X\cap H|$.
\end{lemma}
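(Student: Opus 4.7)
The plan is to exploit the maximality of $H$ together with the normality of $X$ in $G$. First I would consider the subgroup $HX$ of $G$; since $X\trianglelefteq G$, the product $HX$ is indeed a subgroup, and it contains $H$. By hypothesis $X\not\leq H$, so the inclusion $H\leq HX$ is strict, and therefore the maximality of $H$ in $G$ forces $HX=G$. This is the first assertion.

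For the divisibility claim, I would apply the standard product formula $|HX|=|H|\,|X|/|H\cap X|$. Rearranging and using $HX=G$ gives
\[
|H|=\frac{|G|}{|X|}\cdot|H\cap X|=|G:X|\cdot|H\cap X|.
\]
It then remains to bound $|G:X|$ by $|\Out(X)|$. Because $X$ is a nonabelian simple group we have $\Z(X)=1$, so $\Inn(X)\cong X$, and the embedding $G\hookrightarrow\Aut(X)$ provided by the almost simple hypothesis sends $X$ isomorphically onto $\Inn(X)$. Consequently $G/X$ embeds into $\Aut(X)/\Inn(X)=\Out(X)$, which shows that $|G:X|$ divides $|\Out(X)|$, and hence $|H|$ divides $|\Out(X)|\cdot|H\cap X|$, as required.

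There is essentially no obstacle here: the statement is a routine consequence of maximality and the structure of almost simple groups, and I would expect the entire argument to fit in a few lines once the product formula and the identification $X\cong\Inn(X)$ are invoked.
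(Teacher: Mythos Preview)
Your argument is correct and is the standard proof of this fact. Note that the paper does not actually supply a proof of this lemma; it is quoted from \cite[Lemma~2.2]{a:ABD-PSL3} and stated without proof, so there is no alternative approach in the paper to compare against.
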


\begin{lemma}\label{lem:six}{\rm \cite[Lemma 2.1]{a:ABD-PSL2}}
    Let $\Dmc$ be a symmetric $(v,k,\lambda)$ design, and let $G$ be a flag-transitive automorphism group of $\Dmc$. If $\alpha$ is a point of $\Dmc$ and $H=G_{\alpha}$, then
    \begin{enumerate}[\rm \quad (a)]
        \item $k(k-1)=\lambda(v-1)$;
        \item $k\mid |H|$ and $\lambda v<k^2$;
        \item $k\mid \lambda d$, for all nontrivial subdegrees $d$ of $G$.
    \end{enumerate}
\end{lemma}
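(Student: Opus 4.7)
Part (a) is the classical replication count for a symmetric design. Since every block has size $k$ and, in a symmetric design, every point is incident with exactly $r=k$ blocks, a double count of the pairs $(\beta,B)$ with $\beta\neq\alpha$ and $\alpha,\beta\in B$ produces $k(k-1)$ one way and $\lambda(v-1)$ the other. This step is purely combinatorial and uses nothing about $G$.

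For the divisibility in (b), the plan is to invoke flag-transitivity directly: $H=G_{\alpha}$ permutes the set $\Bmc_{\alpha}$ of blocks incident with $\alpha$, and because $G$ is transitive on flags, $H$ must act transitively on $\Bmc_{\alpha}$. Since $|\Bmc_{\alpha}|=r=k$, the divisibility $k\mid|H|$ is immediate. The inequality $\lambda v<k^{2}$ is then algebraic: rewriting (a) as $\lambda v = k^{2}-k+\lambda$ reduces the claim to $\lambda<k$, and for a nontrivial design (so $v>k$) the identity $\lambda=k(k-1)/(v-1)\leq k-1$ gives this at once.

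For (c), the approach is another short double count, now exploiting the orbit structure of $H$. Fix a nontrivial $H$-suborbit $\Delta$ of length $d$ and set $\mu:=|B\cap\Delta|$ for some $B\in\Bmc_{\alpha}$. Because $H$ is transitive on $\Bmc_{\alpha}$ and preserves $\Delta$ setwise, $\mu$ is independent of the choice of $B\in\Bmc_{\alpha}$. I would then count the set $\{(\beta,B):\beta\in\Delta,\,B\in\Bmc_{\alpha},\,\beta\in B\}$ in two ways: summing over $B$ yields $k\mu$, while summing over $\beta\in\Delta$ yields $\lambda d$, since each $\beta\in\Delta$ lies on exactly $\lambda$ blocks that also contain $\alpha$. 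Equating the two counts gives $k\mu=\lambda d$, and hence $k\mid\lambda d$.

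No step presents a real obstacle; the entire lemma reduces to elementary double counting combined with the two basic consequences of flag-transitivity used above. The subtlest point worth flagging explicitly is the constancy of the intersection number $\mu$ in (c), which must be deduced from the $H$-transitivity of $\Bmc_{\alpha}$ together with the $H$-invariance of the suborbit $\Delta$; once that is in hand, part (c) follows by the one-line count.
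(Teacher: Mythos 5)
Your argument is correct and is exactly the standard one: the paper itself gives no proof of this lemma (it is quoted from \cite[Lemma~2.1]{a:ABD-PSL2}), and the double counts you describe for (a) and (c), together with the observation that flag-transitivity forces $H$ to act transitively on the $k$ blocks through $\alpha$ (whence $k\mid |H|$ and the constancy of $|B\cap\Delta|$ across $B\in\Bmc_{\alpha}$), are precisely how the cited source establishes it. You are also right to flag that $\lambda v<k^{2}$ reduces to $\lambda<k$ and hence needs $v-1\geq k$, which is guaranteed by the nontriviality hypothesis in force throughout the paper.
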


\begin{lemma}[Tits' Lemma]\label{lem:Tits1}{\rm \cite[1.6]{a:Seitz-TitsLemma}}
    If $X$ is a group of Lie type in characteristic $p$, then any proper subgroup of index prime to $p$ is contained in a proper parabolic
    subgroup of $X$.
\end{lemma}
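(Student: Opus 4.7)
The plan is to deduce Tits' Lemma from Sylow's theorem combined with the Borel--Tits structure theorem for finite groups of Lie type.

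\emph{Reduction.} Since $[X:H]$ is coprime to $p$, any Sylow $p$-subgroup $U$ of $X$ acts on the coset space $X/H$ with orbits of $p$-power length; these sum to $[X:H]$, which is not divisible by $p$, so there is a fixed coset and some conjugate of $U$ is contained in $H$. Replacing $H$ by a conjugate, which preserves both the hypothesis on the index and the form of the conclusion since conjugates of proper parabolics are proper parabolics, I may assume $U \leq H$. In a group of Lie type $X$ in characteristic $p$, the Bruhat decomposition $X = \bigsqcup_{w \in W} B w B$ yields $|X|_{p} = |U|$, so $U = R_u(B)$ is the unipotent radical of a Borel subgroup $B = U \rtimes T$, and moreover $N_X(U) = B$.

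\emph{Applying Borel--Tits.} The Borel--Tits theorem states that for every non-trivial unipotent subgroup $Q$ of $X$ the normaliser $N_X(Q)$ is contained in a proper parabolic subgroup of $X$. Setting $Q := O_p(H)$ gives $H \leq N_X(O_p(H)) \leq P$ for some proper parabolic $P$ of $X$, which finishes the proof, provided that $O_p(H)$ is non-trivial.

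\emph{The main obstacle.} The decisive point is therefore to verify $O_p(H) \neq 1$. If $U$ is already normal in $H$ then $O_p(H) \geq U \neq 1$ and we are done. Otherwise $H$ contains some $h$ with $U^{h} \neq U$; writing $h \in BwB$ via Bruhat for a non-trivial Weyl element $w$, the pair $(U, U^{h})$ generates a rank-one Levi-type subgroup, which forces $H$ to contain opposite root subgroups $U_{\alpha}$ and $U_{-\alpha}$ for some positive root $\alpha$. Letting $J \subseteq S$ index the set of simple reflections arising in this way, a Chevalley-commutator-relations argument shows $J$ is a proper subset of $S$, for otherwise $H$ would contain generators of all opposite root subgroups and hence all of $X$, contradicting properness. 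In this generic case one obtains $H \leq P_{J} = B W_{J} B$, a proper standard parabolic. The cleanest uniform presentation I would adopt in the write-up is to invoke Borel--Tits in its standard strengthened form, namely that every proper subgroup of $X$ containing a Sylow $p$-subgroup is contained in a proper parabolic, which sidesteps the above dichotomy entirely.
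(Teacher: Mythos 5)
The paper does not prove this statement: it is quoted from \cite[1.6]{a:Seitz-TitsLemma}, where it is attributed to Tits, so the only honest comparison is with that classical argument. Your opening reduction is correct --- a proper subgroup of index prime to $p$ contains a Sylow $p$-subgroup $U$, and one may take $U$ to be the unipotent radical of the standard Borel $B$ with $N_X(U)=B$ --- and the finite Borel--Tits theorem does give $H\leq N_X(O_p(H))\leq P$ for a proper parabolic $P$ once $O_p(H)\neq 1$. But the step you yourself flag as decisive is not closed, and it cannot be dismissed: for $H\supseteq U$ one has $O_p(H)=\bigcap_{h\in H}U^h$, and proving this is nontrivial is essentially equivalent to the lemma itself (given the conclusion $H\leq P$, one gets $1\neq O_p(P)\leq O_p(H)$ for free, and conversely). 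Your sketch of the remaining case contains a false intermediate claim: if $U^h\neq U$ then $\langle U,U^h\rangle$ is in general not ``a rank-one Levi-type subgroup'' --- for $h$ in the big cell it is all of $X$, since $X=\langle U,U^-\rangle$. Even granting that $H$ acquires opposite root subgroups $U_{\pm\alpha}$ for the simple roots $\alpha$ in some subset $J$ of $S$, you only argue that $J\neq S$; you give no argument that every element of $H$ lies in $P_J=BW_JB$, i.e.\ that $H$ contains no element of a double coset $BwB$ with $w\notin W_J$. Excluding such elements is the substantive content of Tits' proof.

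The fallback you adopt --- invoking ``Borel--Tits in its standard strengthened form, namely that every proper subgroup of $X$ containing a Sylow $p$-subgroup is contained in a proper parabolic'' --- is circular: that statement is not the Borel--Tits theorem (whose hypothesis is precisely $O_p(H)\neq 1$) but is, after your first paragraph, a verbatim restatement of the lemma to be proved. As written the proposal therefore has a genuine gap. The remedy is either to cite \cite[1.6]{a:Seitz-TitsLemma} as the paper does, or to carry out the $BN$-pair generation argument in full: show that $H\supseteq U$ together with $H\not\leq B$ forces $H$ to contain full negative root subgroups for the simple roots in a set $J$, that $J=S$ would give $H=X$, and --- the part currently missing --- that $H$ is then actually contained in $P_J$.
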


\begin{lemma}\label{lem:Tits}{\rm \cite[1.6]{a:Seitz-TitsLemma}}
    Suppose that $\Dmc$ is a symmetric $(v,k,\lambda)$ design admitting a flag-transitive and point-primitive almost simple automorphism group $G$ with socle $X$ of Lie type in odd characteristic $p$. Suppose also that the point-stabiliser $G_{\alpha}$, not containing $X$, is not a parabolic subgroup of $G$. Then $\gcd(p,v-1)=1$.
\end{lemma}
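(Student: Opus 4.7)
The plan is to deduce the statement from Tits' Lemma (Lemma \ref{lem:Tits1}) by proving that $p\mid v$. I set $H:=G_{\alpha}$, which is maximal in $G$ by point-primitivity and does not contain $X$ by hypothesis. Lemma \ref{lem:New} gives $G=HX$, so $v=|G:H|=|X:X\cap H|$, and since $p\mid v$ implies $v-1\equiv -1\pmod{p}$, it will be enough to show that $p$ divides $|X:X\cap H|$.

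I would argue this by contradiction: assume $p\nmid |X:X\cap H|$. Then $X\cap H$ contains a full Sylow $p$-subgroup $U$ of $X$, which for $X$ of Lie type in characteristic $p$ is the unipotent radical of the Borel subgroup $B=N_{X}(U)$. Tits' Lemma supplies at least one proper parabolic subgroup of $X$ containing $X\cap H$, and any such parabolic must contain $U$, hence contain $B$, so it is a standard parabolic with respect to $B$. Standard parabolics are indexed by subsets of the Dynkin diagram and are closed under intersection, so the family of proper parabolic overgroups of $X\cap H$ admits a unique minimal element $P_{0}$.

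Next, because $X\unlhd G$, the subgroup $X\cap H$ is normal in $H$, so conjugation by $H$ permutes the proper parabolic subgroups of $X$ containing $X\cap H$, and by uniqueness fixes $P_{0}$. Thus $H\leq N_{G}(P_{0})$. Parabolic subgroups are self-normalising in $X$, so $N_{G}(P_{0})\cap X=P_{0}$ is a proper subgroup of $X$, whence $N_{G}(P_{0})$ is a proper parabolic subgroup of $G$ in the almost simple sense. Maximality of $H$ in $G$ then forces $H=N_{G}(P_{0})$, contradicting the hypothesis that $H$ is not parabolic. This gives $p\mid v$ and consequently $\gcd(p,v-1)=1$.

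The main subtlety I anticipate is justifying the existence of a unique minimal parabolic overgroup $P_{0}$ of $X\cap H$: this is precisely where the hypothesis $p\nmid |X:X\cap H|$ bites, forcing a Sylow $p$-subgroup $U$ into $X\cap H$ and thereby reducing all parabolic overgroups of $X\cap H$ to standard ones relative to the unique Borel $B=N_{X}(U)$. Once this is in place, the transfer from a parabolic of $X$ to a parabolic of $G$ is routine, using only $X\unlhd G$, the maximality of $H$, and the self-normalisation of parabolic subgroups of $X$.
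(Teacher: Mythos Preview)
Your argument is correct and is the standard deduction of this corollary from Tits' Lemma. The paper itself does not supply a proof of Lemma~\ref{lem:Tits}; it merely records the statement with the same citation \cite[1.6]{a:Seitz-TitsLemma} as Lemma~\ref{lem:Tits1}, treating it as an immediate consequence. Your write-up makes that deduction explicit: the contrapositive reduces to showing $p\mid |X:X\cap H|$, and if not, then $X\cap H$ contains a full Sylow $p$-subgroup $U$, so every parabolic overgroup of $X\cap H$ contains the Borel $B=N_X(U)$; the intersection of these standard parabolics is the unique minimal proper parabolic $P_0\geq X\cap H$, which is therefore $H$-invariant, forcing $H=N_G(P_0)$ by maximality and contradicting the hypothesis. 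The only point worth a word of caution is terminological: ``parabolic subgroup of $G$'' for almost simple $G$ is being used in the sense of $N_G(P)$ for $P$ a parabolic of $X$, which is indeed how the paper uses the term throughout. With that understood, your proof is complete and matches the intended (though unwritten) argument.
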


If a group $G$ acts on a set $\Pmc$ and $\alpha\in \Pmc$, the \emph{subdegrees} of $G$ are the size of orbits of the action of the point-stabiliser $G_\alpha$ on $\Pmc$.

\begin{lemma}\label{lem:subdeg}{\rm \cite{a:LSS1987}}
    If $X$ is a group of Lie type in characteristic $p$, acting on the set of cosets of a maximal parabolic subgroup, and $X$ is not $\PSL_{n}(q)$, $\POm^{+}_{n}(q)$(with $n/2$ odd) and $E_{6}(q)$, then there is a unique subdegree which is a power of $p$.
\end{lemma}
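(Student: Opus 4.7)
The plan is to exploit the Bruhat decomposition and reduce the computation of subdegrees to Weyl-group combinatorics. Let $P=P_J$ be a maximal parabolic of $X$ corresponding to removing a simple root $\alpha_i$, with $J=\Pi\setminus\{\alpha_i\}$ and parabolic Weyl subgroup $W_J\leq W$. By the Bruhat decomposition $X=\bigsqcup_w PwP$, the nontrivial $P$-orbits on $X/P$ are indexed by the nontrivial double cosets $W_JwW_J\in W_J\backslash W/W_J$, and each subdegree admits the closed form
\[
d_w\;=\;\frac{|PwP|}{|P|}\;=\;\sum_{u}q^{\ell(u)},
\]
where the sum ranges over the distinguished minimal representatives $u$ of the right cosets of $W_J$ inside $W_JwW_J$.

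The crux of the argument is to determine when $d_w$ is a pure $p$-power. Since $d_w$ is a sum of powers of $q$, the generic reason for $d_w$ to collapse to a single monomial $q^a$ is that $W_JwW_J$ consists of a single right $W_J$-coset; this happens exactly when $w$ normalises $W_J$, so the nontrivial $p$-power subdegrees of this type are in bijection with the non-identity elements of $N_W(W_J)/W_J$. A separate degenerate case arises when all representatives $u$ have the same length and the number of cosets is itself a power of $p$; such accidental $p$-powers must be ruled out by inspection in each Lie type.

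I would therefore proceed in two steps. First, I would invoke Howlett's description of parabolic normalisers to show that $|N_W(W_J)/W_J|=2$ for every maximal parabolic $P_J$ in the non-excluded Lie types $B_n$, $C_n$, $D_n$ (with $n$ even), $G_2$, $F_4$, $E_7$, $E_8$; this establishes existence and one direction of uniqueness. Second, I would verify that no accidental $p$-power subdegree arises from an equal-length multi-coset double coset, by tabulating the lengths of distinguished coset representatives against the $W_J$-orbits on the positive roots outside the subsystem generated by $J$.

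For the three excluded families $\PSL_n(q)$, $\POm_{2m}^+(q)$ with $m$ odd, and $E_6(q)$, a non-trivial Dynkin diagram automorphism interacts with certain choices of $J$ in such a way that either $N_W(W_J)/W_J$ is trivial (so no nontrivial $p$-power subdegree exists) or has order at least $3$ (so uniqueness fails). The main obstacle is the finite but intricate case-analysis over all pairs (Lie type, removed node), which requires a careful matching of Weyl-group combinatorics against the structure of the Dynkin diagram and its automorphism group; this is the step at which Liebeck, Saxl and Seitz's detailed inspection in their original paper does most of the work.
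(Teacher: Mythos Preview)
The paper does not supply a proof of this lemma at all; it is simply quoted from Liebeck, Saxl and Seitz \cite{a:LSS1987} as a black-box tool, so there is no in-paper argument to compare against.

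That said, your outline is the standard and correct framework, and it is essentially the one used in the original reference: parametrise the $P$-orbits on $X/P$ by $W_J\backslash W/W_J$ via the Bruhat decomposition, write each subdegree as a $q$-polynomial over distinguished coset representatives, and identify the monomial (hence $p$-power) subdegrees with nontrivial elements of $N_W(W_J)/W_J$; then invoke Howlett's description of parabolic normalisers to compute $|N_W(W_J)/W_J|$ type by type. Two points to tighten. First, your list of non-excluded types covers only the untwisted groups; the twisted families $^2\!A_n$, $^2D_n$, $^3D_4$, $^2E_6$, $^2B_2$, $^2G_2$, $^2F_4$ have their own (possibly non-crystallographic) BN-pair data and must be handled separately, though the same mechanism applies. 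Second, your dichotomy ``single right coset versus accidental equal-length multi-coset'' understates the issue slightly: the elimination of accidental $p$-powers is not merely a finite inspection but requires, in several types, explicit knowledge of the length distribution of $W_J$-coset representatives in each double coset, which is exactly the detailed case analysis you defer to the cited paper. Your proposal is therefore an accurate roadmap rather than a self-contained proof, and you correctly identify where the real work lies.
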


For a point stabiliser $H$ of an automorphisms group $G$ of a flag-transitive design $\Dmc$, by Lemma~\ref{lem:six}(b), we conclude that $\lambda|G|\leq |H|^{3}$, and so we have that

\begin{corollary}\label{cor:large}
    Let $\Dmc$ be a flag-transitive $(v, k, \lambda)$ symmetric design with automorphism group $G$. Then $|G|\leq |G_{\alpha}|^3$,  where $\alpha$ is a point in $\Dmc$, and so $|X|<|\Out(X)|^2{\cdot}|H\cap X|^3$.
\end{corollary}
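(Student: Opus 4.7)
The plan is to combine Lemma~\ref{lem:six}(b) with Lemma~\ref{lem:New}. Both ingredients are already in place, so the argument will be a short manipulation of inequalities rather than anything structural; the only subtlety is to organise the counting so that one does not waste a factor of $|\Out(X)|$.

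First I would derive the bound $|G|\leq |G_{\alpha}|^{3}$. Lemma~\ref{lem:six}(b) gives two ingredients: $k$ divides $|H|$, whence $k\leq|H|$, and $\lambda v<k^{2}$. Multiplying the second inequality by $|H|$ and using the first yields
\[
\lambda|G|=\lambda|H|\cdot v<|H|\cdot k^{2}\leq |H|^{3},
\]
which, since $\lambda\geq 1$, already gives the first assertion of the corollary.

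Next I would pass from $|H|$ to $|H\cap X|$ using Lemma~\ref{lem:New}. Since $G$ is almost simple with socle $X$ and $H$ is maximal in $G$ not containing $X$, we have $G=HX$, hence $d:=|G:X|=|H:H\cap X|$ divides $|\Out(X)|$. Writing $|H|=d\cdot|H\cap X|$ and $|X|=|G|/d$, and plugging into $|G|<|H|^{3}/\lambda\leq|H|^{3}$, I get
\[
|X|=\frac{|G|}{d}<\frac{|H|^{3}}{d}=d^{2}\cdot|H\cap X|^{3}\leq|\Out(X)|^{2}\cdot|H\cap X|^{3},
\]
which is the second assertion. The key bookkeeping point is to cube $|H|=d\cdot|H\cap X|$ and cancel exactly one copy of $d$ against the $|G:X|$ in the denominator of $|X|=|G|/d$; doing this carelessly would give the weaker bound with $|\Out(X)|^{3}$.

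There is no real obstacle in this corollary beyond that observation: everything follows from the two preceding lemmas by an elementary computation. The reason the corollary is stated separately is that it is the single inequality that will be applied repeatedly in the classification arguments of Section~\ref{sec:Proof}, where for each candidate socle $X$ and each candidate maximal subgroup $H$ it is the first and cheapest test used to rule out putative parameter sets.
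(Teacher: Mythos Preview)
Your proposal is correct and follows essentially the same approach as the paper: the paper merely states, in the sentence preceding the corollary, that ``by Lemma~\ref{lem:six}(b), we conclude that $\lambda|G|\leq |H|^{3}$'', and leaves the passage to $|X|<|\Out(X)|^{2}\cdot|H\cap X|^{3}$ implicit. Your write-up spells out both steps --- the multiplication $\lambda|G|=\lambda|H|v<|H|k^{2}\leq|H|^{3}$ and the bookkeeping with $d=|G{:}X|=|H{:}H\cap X|$ via Lemma~\ref{lem:New} --- exactly as intended.
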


\begin{lemma}\label{lem:divisible}{\rm \cite[Lemma 2.5]{a:ABCD-PrimeRep}}
    Suppose that $\Dmc$ is a $(v, k, \lambda)$ symmetric design. Let $G$ be a flag-transitive automorphism group of $\Dmc$ with simple socle $X$ of Lie type in characteristic $p$. If the point-stabiliser $H=G_{\alpha}$ contains a normal quasi-simple subgroup $N$ of Lie type in characteristic $p$ and $p$ does not divide $|Z(N)|$, then either $p$ divides $k$, or $N_{B}$ is contained in a parabolic subgroup $P$ of $N$ and $k$ is divisible by $|N{:}P|$.
\end{lemma}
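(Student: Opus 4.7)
The plan is to combine flag-transitivity with Tits' Lemma (Lemma~\ref{lem:Tits1}) applied to the quasi-simple subgroup $N\unlhd H$. First I would fix a flag $(\alpha,B)$ and set $H=G_{\alpha}$. Since $G$ is flag-transitive, $H$ acts transitively on the set of $k$ blocks through $\alpha$; as $N$ is normal in $H$, the $N$-orbits on this set all have a common length $m=|N:N_B|$, so in particular $|N:N_B|$ divides $k$.

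If $p$ divides $k$, the first conclusion of the lemma holds and we are done, so I may assume $p\nmid k$, and hence $p\nmid|N:N_B|$. The key step is to transfer this coprimality to the simple quotient $\bar N=N/Z(N)$. Writing $\bar M=N_B Z(N)/Z(N)$ for the image of $N_B$ in $\bar N$, the identity
\[
|N:N_B|=|\bar N:\bar M|\cdot|Z(N):Z(N)\cap N_B|,
\]
combined with the hypothesis $p\nmid|Z(N)|$, forces $|\bar N:\bar M|$ to be prime to $p$ as well. Applying Tits' Lemma to the simple group $\bar N$ of Lie type in characteristic $p$ then places $\bar M$ inside a proper parabolic subgroup of $\bar N$; its full preimage is a parabolic subgroup $P$ of $N$ containing $N_B$, and $|N:P|$ divides $|N:N_B|$, which in turn divides $k$. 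This is precisely the second conclusion.

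The main delicate point is the passage from $N$ to its simple quotient $\bar N$: one has to observe that the correction factor $|Z(N):Z(N)\cap N_B|$ divides $|Z(N)|$ and is therefore coprime to $p$, which is exactly where the hypothesis $p\nmid|Z(N)|$ is needed. Once this is in place the remainder is essentially formal — the equal length of the $N$-orbits on the blocks through $\alpha$ is a standard consequence of $N\unlhd H$ acting inside the transitive group $H$, and Tits' Lemma then delivers the parabolic containment. In the write-up I would also be careful to recall the convention that a parabolic subgroup of a quasi-simple group of Lie type means the full preimage of a parabolic subgroup of its simple quotient, so that the statement ``$N_B\le P$ and $|N{:}P|\mid k$'' carries the intended meaning.
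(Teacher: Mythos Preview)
The paper does not prove this lemma; it is quoted verbatim from \cite[Lemma~2.5]{a:ABCD-PrimeRep} without argument. So there is no ``paper's own proof'' to compare against here.

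Your argument is the natural one and is essentially correct: normality of $N$ in the transitive group $H$ gives equal $N$-orbit lengths $|N:N_B|$ on the blocks through $\alpha$, hence $|N:N_B|\mid k$; then Tits' Lemma applied to $\bar N=N/Z(N)$ forces $\bar M=N_BZ(N)/Z(N)$ into a parabolic $\bar P$, whose preimage $P$ contains $N_B$ with $|N:P|=|\bar N:\bar P|\mid|\bar N:\bar M|\mid|N:N_B|\mid k$.

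One small comment on your diagnosis of where the hypothesis $p\nmid|Z(N)|$ enters. In the factorisation $|N:N_B|=|\bar N:\bar M|\cdot|Z(N):Z(N)\cap N_B|$, once you know $p\nmid|N:N_B|$ you immediately get $p\nmid|\bar N:\bar M|$ because the latter is a \emph{factor} of the former; you do not need to know anything about the other factor. So that particular step does not use $p\nmid|Z(N)|$. The hypothesis is really there to ensure that the notion of ``parabolic subgroup of $N$'' behaves as expected (preimages of parabolics of the simple quotient, with the same index) and that Tits' Lemma transfers cleanly between $N$ and $\bar N$. You should also note, for completeness, that if $\bar M=\bar N$ then Tits' Lemma is vacuous, but in that case one may take $P=N$ and the conclusion $|N:P|\mid k$ is trivially satisfied.
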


The following result gives a classification of primitive groups of odd degree of almost simple type with socle finite simple classical groups. This result is proved independently in \cite{a:Kantor-87-Odd} and \cite{a:Liebeck-Odd}. Here we follow the description of this groups as in \cite{a:Liebeck-Odd}.

\begin{lemma}{\rm \cite[Theorem]{a:Liebeck-Odd}}\label{lem:odd-deg}
    Let $G$ be a primitive permutation group of odd degree $v$ on the set $\Gamma$. Assume that the socle $X=X(q)$ of $G$ is a simple classical group with a natural projective module $V=V_{n}(q)$, where $q=p^a$ and $p$ prime, and let $H=G_{\alpha}$ be the stabilizer of a point $\alpha \in \Gamma$, then one of the following holds:
    \begin{enumerate}[\rm(a)]
        \item  if $q$ is odd then one of $(i),(ii)$ below holds:
        \begin{enumerate}[\rm(i)]
            \item $X$ is a classical group with natural projective module $V=V_{n}(q)$ and one of $(1)$-$(7)$ below holds:
            \begin{enumerate}[\rm(1)]
                \item $H$ is the stabilizer of a nonsingular subspace (any subspace for $X=\PSL_{n}(q)$);
                \item $H\cap X$ is the stabilizer of an orthogonal decomposition $V=\oplus V_{j}$ with all $V_{j}$'s isometric (any decomposition $V=\oplus V_{j}$ with $\dim(V_{j})$ constant for $X=\PSL_{n}(q)$);
                \item $X=\PSL_{n}(q)$, $H$ is the stabilizer of a pair $\{U,W\}$ of subspaces of complementary dimensions with $U\leq W$ or $U\oplus W=V$, and G contains a graph automorphism;
                \item $H\cap X$ is $\SO_{7}(2)$ or $\Omega_{8}^{+}(2)$ and $X$ is $\Omega_{7}(q)$ or $\POm_{8}^{+}(q)$, respectively, $q$ is prime and $q\equiv \pm 3 \mod{8}$;
                \item $X=\POm_{8}^{+}(q)$, $q$ is prime and $q\equiv \pm 3 \mod{8}$, $G$ contains a triality automorphism of $X$ and $H\cap X$ is $2^3\cdot 2^6\cdot \PSL_{3}(2)$;
                \item $X=\PSL_{2}(q)$ and $H\cap X$ is dihedral, $\Alt_{4}$, $\Sym_{4}$, $\Alt_{5}$ or $\PGL_{2}(q_{0})$, where $q=q_{0}^2$;
                \item $X=\PSU_{3}(5)$ and $H\cap X=M_{10}$.
            \end{enumerate}
            \item  $H = N_{G}(X(q_{0}))$, where $q=q_{0}^t$ and $t$ is an odd prime;
        \end{enumerate}
        \item  if $q$ is even then $H\cap X$ is a parabolic subgroup of $X$.
    \end{enumerate}
\end{lemma}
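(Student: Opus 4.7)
The plan is to reduce the odd-degree hypothesis to the statement that the maximal subgroup $H\cap X$ of $X$ has odd index in $X$, and then to run through Aschbacher's theorem on maximal subgroups of finite classical groups. By Lemma~\ref{lem:New} we have $G=HX$, so $|G:H|=|X:H\cap X|$; the hypothesis that $|G:H|$ is odd therefore forces $H\cap X$ to contain a full Sylow $2$-subgroup of $X$. Primitivity of $G$ is equivalent to maximality of $H$ in $G$, and modulo bookkeeping the almost simple overgroup structure (field, graph and triality automorphisms), this reduces the classification to determining those maximal subgroups of $X$ of odd index.

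For case (b), when $q$ is even, the defining characteristic is $p=2$ and the odd-index condition is exactly the statement that $H\cap X$ has index prime to $p$. Tits' Lemma~\ref{lem:Tits1} then immediately gives that $H\cap X$ is contained in a proper parabolic subgroup $P$ of $X$; maximality of $H$ in $G$ combined with the fact that normalisers of parabolic subgroups are themselves maximal forces $H\cap X=P$, proving (b).

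For case (a), with $q$ odd, I would apply Aschbacher's theorem partitioning the maximal subgroups of $X$ into the geometric classes $\mathcal{C}_1,\dots,\mathcal{C}_8$ and the residual almost simple irreducible class $\mathcal{S}$, and then compute the $2$-part of the index $|X:H\cap X|$ for each class using the standard index formulae. In $\mathcal{C}_1$, analysing the $2$-adic valuation of Gaussian binomial coefficients and of orthogonal-space indices shows that $H\cap X$ must stabilise a nonsingular subspace (any subspace if $X=\PSL_n(q)$), giving (1) and, after adjoining a graph automorphism of $\PSL_n(q)$, also (3); in $\mathcal{C}_2$, the parity constraint forces the summands in the decomposition to be isometric (respectively of equal dimension when $X=\PSL_n(q)$), yielding (2); in $\mathcal{C}_5$, writing $q=q_0^t$ and computing $|X(q):X(q_0)|_2$ forces $t$ to be an odd prime, producing case (ii); while in the remaining geometric classes $\mathcal{C}_3,\mathcal{C}_4,\mathcal{C}_6,\mathcal{C}_7,\mathcal{C}_8$ the $2$-part of the index is almost always even, with the only surviving configurations being the small characteristic-two subgroups $\SO_7(2)$, $\Omega_8^+(2)$ and the triality-extended $2^3{\cdot}2^6{\cdot}\PSL_3(2)$ under the congruence $q\equiv\pm 3\pmod{8}$, yielding (4) and (5).

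The residual $\mathcal{S}$-class is the central obstacle, since here there is no generic index formula: one must rely on the explicit lists of absolutely irreducible representations of quasisimple groups in low dimension (in the style of Kleidman--Liebeck and Hiss--Malle) and check the odd-index condition case by case, arriving only at the sporadic $X=\PSU_3(5)$, $H\cap X=\mathrm{M}_{10}$ of (7). The remaining case $X=\PSL_2(q)$, where Aschbacher's theorem is replaced by Dickson's classical list of subgroups, is handled directly and contributes (6). A secondary subtlety that accounts for the appearance of (3) and (5) is correctly lifting maximality of $H\cap X$ in $X$ to maximality of $H$ in the almost simple overgroup $G$, since novelty maximal subgroups of $G$ arise precisely when graph or triality automorphisms fuse geometric objects that are inequivalent inside $X$.
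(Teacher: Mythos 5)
This statement is not proved in the paper at all: it is quoted verbatim from Liebeck--Saxl (and, independently, Kantor) as an external classification theorem, so there is no internal argument to measure your proposal against. What you have written is, in outline, a faithful description of how the theorem is actually proved in the literature: reduce the odd-degree hypothesis to the condition that $H\cap X$ has odd index in $X$ (hence contains a Sylow $2$-subgroup of $X$), dispose of even $q$ via Tits' Lemma since odd index then means index prime to $p$, and for odd $q$ run through the Aschbacher classes computing the $2$-part of each index, with the class $\mathcal{S}$ handled by inspection of the known quasisimple irreducible subgroups. So the strategy is sound and is the right one.

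As a proof, however, the proposal is only a plan. Every step that carries actual content is asserted rather than executed: the $2$-adic valuation of the Gaussian binomial coefficients and of the orthogonal/unitary index formulae in $\mathcal{C}_1$ and $\mathcal{C}_2$, the verification that $\mathcal{C}_3$, $\mathcal{C}_4$, $\mathcal{C}_6$, $\mathcal{C}_7$, $\mathcal{C}_8$ contribute nothing further, and above all the analysis of class $\mathcal{S}$, which is the bulk of the original papers and requires detailed information about irreducible quasisimple subgroups of classical groups. There is also a concrete misattribution: the subgroups $\SO_{7}(2)\cong \Sp_{6}(2)$ of $\Omega_{7}(q)$ and $\Omega_{8}^{+}(2)$ of $\POm_{8}^{+}(q)$ in conclusion (4) are class-$\mathcal{S}$ (irreducible almost simple) subgroups, not survivors of the geometric classes as you claim, and your assertion that $\mathcal{S}$ yields only the $\PSU_{3}(5)$, $M_{10}$ example is therefore false as stated. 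None of this invalidates the approach, but it means the proposal cannot be regarded as a proof of the lemma; in the context of this paper the correct move is simply to cite the theorem, as the authors do.
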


We will use the following results in order to obtain suitable lower or upper bounds for parameters of possible designs. The proof of these results can be found in \cite{a:ABD-Un-CP,a:AB-Large-15}

\begin{lemma}{\rm\cite[Lemma~4.2 and Corollary~4.3]{a:AB-Large-15}}\label{lem:up-lo-b}
    \begin{enumerate}[\rm(a)]
        \item  If $n\geq 2$, then
        \begin{align*}
        q^{n^2-2}<&|\PSL_{n}(q)|\leq |\SL_{n}(q)|<(1-q^{-2})q^{n^2-1}\\
        (1-q^{-1})q^{n^2-2}<&|\PSU_{n}(q)|\leq |\SU_{n}(q)|<(1-q^{-2})(1+q^{-3})q^{n^2-1}
        \end{align*}
        \item  If $n\geq 4$, then
        \begin{align*}
        \frac{1}{4}q^{n(n-1)/2}<&|\Omega_{n}(q)|<|\SO_{n}(q)|\leq (1-q^{-2})(1-q^{-4})q^{n(n-1)/2}\\
        \frac{1}{2\beta}q^{n(n+1)/2}<&|\PSp_{n}(q)|\leq |\Sp_{n}(q)|\leq (1-q^{-2})(1-q^{-4})q^{n(n+1)/2}
        \end{align*}
        with $\beta=\gcd(2, q-1)$.
        \item  If $n\geq 6$, then
        \begin{align*}
        \frac{1}{8}q^{n(n-1)/2}<& |\POm^{\pm}_{n}(q)|<|\SO^{\pm}_{n}(q)|\leq \delta (1-q^{-2})(1-q^{-4})(1+q^{-n/2})q^{n(n-1)/2}
        \end{align*}
        with $\delta=\gcd(2, q)$.
    \end{enumerate}
\end{lemma}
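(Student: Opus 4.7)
The plan is to derive all the stated bounds by direct computation from the standard order formulas for the finite classical groups: in each case one factors out the dominant power $q^{E}$ of $q$ and estimates the remaining product of terms of the shape $(1\pm q^{-i})$ by elementary inequalities. For part~(a) I would start from
\[
|\SL_n(q)|=q^{n^2-1}\prod_{i=2}^{n}\bigl(1-q^{-i}\bigr),\qquad |\SU_n(q)|=q^{n^2-1}\prod_{i=2}^{n}\bigl(1-(-1)^{i}q^{-i}\bigr),
\]
and for (b) and (c) from the analogous expressions
\[
|\Sp_{2m}(q)|=q^{m(2m+1)}\prod_{i=1}^{m}\bigl(1-q^{-2i}\bigr),\qquad |\SO_{2m+1}(q)|=q^{m(2m+1)}\prod_{i=1}^{m}\bigl(1-q^{-2i}\bigr),
\]
\[
|\SO^{\varepsilon}_{2m}(q)|=q^{m(2m-1)}\bigl(1-\varepsilon q^{-m}\bigr)\prod_{i=1}^{m-1}\bigl(1-q^{-2i}\bigr),\qquad \varepsilon=\pm 1.
\]

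The upper bounds then follow at once: each residual factor $(1-q^{-i})$ lies in $(0,1)$, so the full product is strictly less than the product of its first one or two factors, which give precisely $(1-q^{-2})$, $(1-q^{-2})(1-q^{-4})$, or $(1-q^{-2})(1-q^{-4})(1+q^{-n/2})$, the three bracketed expressions in the statement. The single positive factor $(1+q^{-3})$ on the right of the $\PSU$ estimate comes from the alternating signs in the unitary product, and the multiplier $\delta=\gcd(2,q)$ in (c) records the index of $\Omega^{\pm}_n(q)$ in $\SO^{\pm}_n(q)$.

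For the lower bounds I would apply the elementary inequality
\[
\prod_{j}\bigl(1-x_j\bigr)\;\ge\;1-\sum_{j}x_j\qquad (0\le x_j<1)
\]
to $x_j=q^{-j}$ (respectively $q^{-2j}$), combined with the geometric-series estimate $\sum_{j\ge 2}q^{-j}\le\tfrac{1}{q(q-1)}$, to obtain a clean lower estimate on the residual product. Dividing by the orders of the relevant scalar subgroups, namely $\gcd(n,q\mp 1)\le q\mp 1$ for $\PSL_n$ and $\PSU_n$, $\beta$ for $\PSp_n$, and bounds of $2$ or $4$ for the orthogonal quotients, and absorbing a single factor of $q$ from the dominant power to drop the exponent by one (from $n^2-1$ to $n^2-2$, or to preserve $n(n\pm1)/2$), one recovers exactly the constants $1$, $(1-q^{-1})$, $\tfrac{1}{4}$, $\tfrac{1}{2\beta}$ and $\tfrac{1}{8}$ in the statement.

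The main technical obstacle lies in the smallest characteristics $q=2,3$, where the residual product is noticeably below $1$ and the centre is proportionally largest. In these corner cases the elementary series inequality alone is barely sufficient, and I would either keep a few initial factors of the product exactly before applying the tail estimate, or appeal to the Euler-product limits $\prod_{i\ge 2}(1-2^{-i})\approx 0.577$ and $\prod_{i\ge 2}(1-3^{-i})\approx 0.84$, which comfortably exceed the worst-case ratios $1/2$ and $2/3$ imposed by the corresponding centres. Outside this small-characteristic bookkeeping the argument is uniform in $n$ and $q$, and (a)--(c) follow.
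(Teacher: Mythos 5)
The paper does not actually prove this lemma; it quotes it verbatim from Alavi--Burness \cite[Lemma~4.2 and Corollary~4.3]{a:AB-Large-15}, where it is established by exactly the computation you describe: factor the standard order formulas as $q^{E}\prod(1\pm q^{-i})$, truncate the product for the upper bounds, and combine the Weierstrass inequality $\prod(1-x_j)\geq 1-\sum x_j$ with bounds on the relevant centres for the lower bounds. Your proposal is therefore correct and essentially identical to the source's argument. The one place where the bare ``first factor plus geometric tail'' estimate genuinely fails is the lower bound for $|\PSU_n(q)|$ in the extremal case $\gcd(n,q+1)=q+1$: there one must retain the factor $(1+q^{-3})$ explicitly and observe that $\prod_{i\geq 3}\bigl(1-(-1)^iq^{-i}\bigr)>1$ by pairing consecutive terms, which is covered by your remark about keeping a few initial factors exact before estimating the tail.
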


\begin{lemma}{\rm\cite[Lemma~4.4]{a:AB-Large-15}}\label{lem:factor-bound}
    Suppose that $t$ is a positive integer. Then
    \begin{enumerate}[\rm (a)]
        \item if $t\geq 5$, then $t!<5^{(t^2-3t+1)/3}$;
        \item if $t\geq 4$, then $t!<2^{4t(t-3)/3}$.
    \end{enumerate}
\end{lemma}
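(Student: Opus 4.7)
The plan is to prove both bounds by induction on $t$. In each case the inductive step reduces to a linear-versus-exponential inequality that holds with ample slack at the base case and improves monotonically thereafter, so the only point that actually requires attention is the verification of the base case. Indeed, the thresholds $t \geq 5$ in (a) and $t \geq 4$ in (b) are essentially the smallest values for which the stated bounds first become valid, which is why the authors picked them.

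For part (a), the base case reads $5! = 120 < 5^{11/3} = 125 \cdot 5^{2/3}$, which holds since $5^{2/3} > 2$ (equivalently, $5^2 = 25 > 8$). For the step from $t$ to $t+1$, the target exponent is
\begin{equation*}
\frac{(t+1)^2 - 3(t+1) + 1}{3} = \frac{t^2 - t - 1}{3},
\end{equation*}
which differs from $(t^2 - 3t + 1)/3$ by $(2t-2)/3$. Plugging the inductive hypothesis into $(t+1)! = (t+1)\, t!$ therefore reduces the step to verifying
\begin{equation*}
t+1 \leq 5^{(2t-2)/3} \qquad \text{for all } t \geq 5.
\end{equation*}
At $t=5$ this reads $6 \leq 5^{8/3}$, and since $5^{8/3} > 5^2 = 25$ the inequality holds with large margin; thereafter the left-hand side grows linearly while the right-hand side grows exponentially, so the inequality is preserved.

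For part (b), the base case is $4! = 24 < 2^{16/3} = 32 \cdot 2^{1/3}$, which is immediate. The analogous exponent difference is
\begin{equation*}
\frac{4(t+1)(t-2) - 4t(t-3)}{3} = \frac{8t-8}{3},
\end{equation*}
so the inductive step reduces to showing $t+1 \leq 2^{(8t-8)/3}$ for $t \geq 4$. At $t=4$ this reads $5 \leq 2^{8} = 256$, and once again the right-hand side grows exponentially against a linear left-hand side, so induction closes. No serious obstacle is anticipated; the entire argument is a routine double induction, and the only part worth actually writing out in the paper is the arithmetic of the two base cases together with the one-line reduction of each inductive step.
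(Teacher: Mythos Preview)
Your induction argument is correct. Note, however, that the paper does not actually prove this lemma: it is quoted verbatim from \cite[Lemma~4.4]{a:AB-Large-15} and used as a black box, so there is no ``paper's own proof'' to compare against. The approach you outline---checking the base case and then reducing the inductive step to a linear-versus-exponential inequality in $t$---is the natural one and is presumably what appears in the cited reference as well.
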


\begin{lemma}\label{lem:equation}{\rm \cite[Lemma 3.12]{a:ABD-Un-CP}}
    Let $q$ be a prime power and $n\geq 3$ be a positive integer number, then
    \begin{align*}
    q^{\frac{n(n-1)}{2}}<\prod_{j=2}^{n}(q^{j}-1)<\prod_{j=2}^{n}(q^{j}-(-1)^{j})<q^{\frac{n^2+n-2}{2}}.
    \end{align*}
\end{lemma}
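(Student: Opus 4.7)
The plan is to establish the three inequalities in the chain separately, handling each by a direct factor-by-factor (or paired factor) estimate.

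For the leftmost inequality $q^{n(n-1)/2} < \prod_{j=2}^{n}(q^{j}-1)$, I would rewrite the left-hand side as $q^{n(n-1)/2}=\prod_{j=2}^{n}q^{j-1}$ using $n(n-1)/2=\sum_{j=2}^{n}(j-1)$, and then verify the factor-wise bound $q^{j-1}<q^{j}-1$ for every $j\geq 2$ and every prime power $q\geq 2$. This is equivalent to $q^{j-1}(q-1)>1$, which is immediate since $q^{j-1}(q-1)\geq 2$. Multiplying these strict inequalities over $j=2,\ldots,n$ gives the claim.

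The middle inequality $\prod_{j=2}^{n}(q^{j}-1)<\prod_{j=2}^{n}(q^{j}-(-1)^{j})$ is essentially trivial: for every even $j$ the two factors agree (both equal $q^{j}-1$), and for every odd $j$ the right-hand factor $q^{j}+1$ strictly exceeds $q^{j}-1$. Since $n\geq 3$, the index $j=3$ lies in the range, so at least one strict comparison occurs and the product inequality is strict.

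The rightmost inequality $\prod_{j=2}^{n}(q^{j}-(-1)^{j})<q^{(n^2+n-2)/2}$ is the most delicate step, since a direct factor-by-factor comparison with $q^{j}$ fails on odd indices. The plan here is to pair each odd $j\geq 3$ with the preceding even index $j-1$ and exploit the algebraic identity
\[
(q^{j-1}-1)(q^{j}+1) \;=\; q^{2j-1}-q^{j-1}(q-1)-1,
\]
whose right-hand side is strictly less than $q^{2j-1}$. When $n$ is odd, the pairs $(2,3),(4,5),\ldots,(n-1,n)$ exhaust the range, and the total exponent telescopes via $\sum_{k=1}^{(n-1)/2}(4k+1)=(n^2+n-2)/2$. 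When $n$ is even, the pairs $(2,3),\ldots,(n-2,n-1)$ leave the single unpaired factor $q^{n}-1<q^{n}$, and adding $n$ to the pair-sum $(n^2-n-2)/2$ again yields $(n^2+n-2)/2$. Either way, the desired strict bound follows; the only bookkeeping to verify is the exponent arithmetic, which is routine via the Gauss sum formula.
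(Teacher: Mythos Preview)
Your proof is correct in all three steps; the factor-wise bound for the first inequality, the trivial comparison for the middle one, and the pairing argument for the third are each valid, and your exponent bookkeeping checks out in both parities of $n$.

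There is nothing to compare against here: the paper does not prove this lemma but merely quotes it from \cite[Lemma~3.12]{a:ABD-Un-CP}. Your argument is a clean, self-contained verification of the cited inequality.
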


\section{Proof of the main results}\label{sec:Proof}

In this section, we prove Theorem \ref{thm:main} and Corollary~\ref{cor:main}. Suppose that $\Dmc$ is a nontrivial symmetric design with $\lambda$ prime, and that $G$ is an automorphism group of $\Dmc$ which is an almost simple group whose socle $X$ is a finite nonabelian simple group of Lie type. Suppose now that $G$ is flag-transitive and point-primitive. Let $H=G_{\alpha}$, where $\alpha$ is a point of $\Dmc$. Then $H$ is maximal in $G$ (see \cite[7, Corollary 1.5A]{b:Dixon}), and so Lemma~\ref{lem:New} implies that
\begin{align}
v=\frac{|X|}{|H \cap X|}.\label{eq:v}
\end{align}

As mentioned in Section \ref{sec:outline}, we only need  to focus on the case where $X$ is a finite simple classical group. Moreover, the parameter $v$ is odd and the possibilities for $H$ can be read off from \cite{a:Liebeck-Odd} which are also recorded in Lemma \ref{lem:odd-deg}. Further, we can assume that $\lambda\geq 5$ is an odd prime and in the case where $X$ is $\PSL_{n}(q)$ or $\PSU_{n}(q)$, we can also assume that $n\geq 5$. In Propositions~\ref{prop:psl}-\ref{prop:orth} below, we discuss possible cases for the pairs $(X,H)$, and finally prove Theorem \ref{thm:main}.
In what follows, we denote by $\,^{\hat{}}H$ the pre-image of the group $H$ in the corresponding group.

\begin{proposition}\label{prop:psl}
    Let $\Dmc$ be a nontrivial symmetric $(v,k,\lambda)$ design with $\lambda\geq 5$ prime. Suppose that $G$ is an automorphism group of $\Dmc$ of almost simple type with socle $X=\PSL_{n}(q)$ for $ n\geq 5$. If $G$ is flag-transitive and $H=G_{\alpha}$ with $\alpha$ a point of $\Dmc$, then $\Dmc$ is the point-hyperplane design of $\PG_{n-1}(q)$ with $\lambda=(q^{n-2}-1)/(q-1)$ prime and $H\cap X\cong \,^{\hat{}}[q^{n-1}]{:}\SL_{n-1}(q){\cdot} (q-1)$.
\end{proposition}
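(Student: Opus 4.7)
The plan is to invoke Lemma~\ref{lem:odd-deg} to enumerate the candidate point-stabilisers $H$, and then to eliminate all of them except the parabolic subgroup stabilising a $1$-space in $\PG_{n-1}(q)$. Since $v=[G:H]$ is odd and $n\ge 5$, items~(4)--(7) of Lemma~\ref{lem:odd-deg}(a)(i) do not apply, so the surviving candidates are: (1) $H$ is a maximal parabolic $P_m$ stabilising an $m$-subspace, $1\le m\le n-1$ (this also covers the even-$q$ case via Lemma~\ref{lem:odd-deg}(b)); (2) $H$ preserves a direct-sum decomposition $V=\bigoplus_{j=1}^{t}V_j$ with all $V_j$ of common dimension $n/t$; (3) $H$ stabilises a pair $\{U,W\}$ of subspaces of complementary dimension (either $U\le W$ or $U\oplus W=V$), and $G$ contains a graph automorphism; and (ii) $H=N_G(X(q_0))$ with $q=q_0^r$, $r$ an odd prime. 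The main tools will be Lemma~\ref{lem:six} (the divisibility relations $k\mid |H|$, $k\mid \lambda(v-1)$ and $k\mid \lambda d$ for every nontrivial subdegree $d$, together with $\lambda v<k^2$), Corollary~\ref{cor:large}, and the numerical estimates of Lemmas~\ref{lem:up-lo-b}--\ref{lem:equation}.

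The parabolic case~(1) is where the desired design appears. When $m=1$ (or, via a graph automorphism, $m=n-1$) one has $H\cap X\cong \,^{\hat{}}[q^{n-1}]{:}\SL_{n-1}(q){\cdot}(q-1)$ and $G$ acts on the points of $\PG_{n-1}(q)$, whence $\Dmc$ is forced to be the classical projective-space design with $v=(q^n-1)/(q-1)$, $k=(q^{n-1}-1)/(q-1)$, $\lambda=(q^{n-2}-1)/(q-1)$, yielding the stated conclusion. For $2\le m\le n/2$ I would use that the action on $m$-subspaces admits the nontrivial subdegree $q(q^m-1)(q^{n-m}-1)/(q-1)^2$ (adjacent subspaces in the Grassmann graph); combining $k\mid\lambda(v-1)$ with $k\mid\lambda d$ forces $k$ to divide $\lambda f(q)$ for a polynomial $f$ of low degree, while the bound $\lambda\le u_\lambda$ coming from $\lambda\mid|H|$ (with $\lambda$ an odd prime at least $5$) is then incompatible with $\lambda v<k^2$, since $v$ grows like $q^{m(n-m)}$ whereas $f(q)^2$ grows only like $q^{2n-2}$. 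I expect this subdegree-and-cyclotomic bookkeeping, together with pinning down exactly which cyclotomic factors $\lambda$ can divide, to be the chief technical obstacle.

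For the remaining three families Corollary~\ref{cor:large} does almost all of the work. In case~(2), $|H\cap X|\le |\GL_{n/t}(q)|^t\cdot t!/(q-1)$, and Lemmas~\ref{lem:up-lo-b}(a) and~\ref{lem:factor-bound} make the inequality $|X|<|\Out(X)|^2\,|H\cap X|^3$ fail for all $t\ge 2$ with $n=mt\ge 5$, apart from a short list of small pairs $(n,q,t)$ which can be ruled out directly from $k(k-1)=\lambda(v-1)$ with $\lambda\ge 5$ prime. Case~(3) reduces to the flag stabiliser $P_m\cap P_{n-m}$, whose order exceeds that of $P_m$ by only a bounded factor, and the same estimates (with an extra factor of $2$ from the graph automorphism) eliminate it. Case~(ii) is handled at once by Corollary~\ref{cor:large}: the index satisfies $v\ge q_0^{(n^2-1)(r-1)}/(r\log_p q)$, which grossly violates $|X|\le|\Out(X)|^2\,|H\cap X|^3$ for every odd prime $r$ and every $n\ge 5$. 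Combining all four eliminations, only the maximal parabolic $P_1$ (equivalently $P_{n-1}$) survives, producing the projective-space design stated in the proposition.
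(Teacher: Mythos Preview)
Your overall strategy---invoke the odd-degree classification and eliminate everything except $P_1$---matches the paper's. However, the claim that Corollary~\ref{cor:large} ``does almost all of the work'' for the non-parabolic families is wrong, and this is a genuine gap. The inequality $|X|<|\Out(X)|^2\,|H\cap X|^3$ \emph{holds} for most of the subgroups in question: for the imprimitive case with $t=2$ one has $|H\cap X|\sim q^{n^2/2}$, so $|H\cap X|^3\sim q^{3n^2/2}\gg|X|$; for the direct-sum stabiliser $N_G(\SL_i(q)\times\SL_{n-i}(q))$ with small $i$ one has $|H\cap X|\sim q^{(n-i)^2}$, again giving $|H\cap X|^3\gg|X|$; and for the subfield case $q=q_0^r$ with $r=3$ the two sides are of the same order, so Corollary~\ref{cor:large} only reduces to $r=3$ and does not finish. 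These are precisely the cases that require work, and the large-subgroup bound gives no information there.

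What the paper actually uses for these families is subdegree information (drawn from Saxl's tables): explicit nontrivial subdegrees $d$ such that $k\mid\lambda d$, combined with $\lambda v<k^2$ and the bound on $\lambda$ coming from $\lambda\mid|H|$. For instance, in the imprimitive case one has $k\mid \lambda t(t-1)(q^i-1)^2/(q-1)$ (or $k\mid 2\lambda n(n-1)(q-1)$ when $i=1$), and in the direct-sum case $k\mid\lambda(q^i-1)(q^{n-i}-1)$; these are much sharper than anything obtainable from $|H|$ alone. For the pair $\{U,W\}$ with $U\le W$ the key input is Lemma~\ref{lem:subdeg} (a subdegree is a $p$-power) together with $(v-1)_p=q$. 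For the subfield case one needs Lemma~\ref{lem:Tits} to strip the $p$-part from $|H|$ before bounding $k$. Your remark that the flag case ``reduces to $P_m\cap P_{n-m}$, whose order exceeds that of $P_m$'' is also backwards: $P_m\cap P_{n-m}$ is smaller than $P_m$, and the $U\oplus W=V$ case is not parabolic at all.

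Finally, your sketch for the parabolic case $m\ge 2$ is on the right track but understates the difficulty. The paper's argument for $i=3$ alone runs to several pages, splitting on the parity of $q$, on which cyclotomic factor $\lambda$ divides, and ultimately requiring explicit polynomial identities relating $v-1$ to the candidate divisors of $k$; the bound $k\mid\lambda\cdot q(q^m-1)(q^{n-m}-1)/(q-1)^2$ you propose is the starting point, not the end.
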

\begin{proof}
    Let $H_{0}=H\cap X$, where $H=G_{\alpha}$ with $\alpha$ a point of $\Dmc$. It follows from Lemma \ref{lem:six}(a) that $v$ is odd. Then by Lemma \ref{lem:odd-deg}, we have one of the following possibilities:
    \begin{enumerate}[\rm (1)]
        \item $H_{0}$ is a parabolic subgroup of $X$;
        \item $H$ is the stabilizer of a pair $\{U,W\}$ of subspaces of complementary dimensions with $U\leq W$ and $G$ contains a graph automorphism.
        \item $q$ is odd, and $H$ is the stabilizer of a pair $\{U,W\}$ of subspaces of complementary dimensions with $U\oplus W=V$, and $G$ contains a graph automorphism.
        \item $q$ is odd, and $H_{0}$ is the stabilizer of a partition $V=V_{1} \oplus \cdots \oplus V_{t}$ with $\dim (V_{j})=i$;
        \item  $q=q_{0}^t$ is odd with $t$ odd prime, and $H = N_{G}(X(q_{0}))$;
    \end{enumerate}
    In what follows, we analyse each of these possible cases separately. \smallskip
    
    \noindent \textbf{(1)} Let $H_{0}$ be a parabolic subgroup of $X$. In this case, $H=P_{i}$, where $i\leq \lfloor n/2 \rfloor$, and by~\cite[Proposition 4.1.17]{b:KL-90}, the subgroup $H_{0}$ is isomorphic to
    \begin{align*}
    \,^{\hat{}}q^{i(n-i)}:\SL_{i}(q)\times \SL_{n-i}(q)\cdot(q-1).
    \end{align*}
    Suppose first that $H =P_{1}$. Then $G$ is $2$-transitive, and this case has already been studied by Kantor \cite{a:Kantor-85-2-trans}. Therefore,  $\Dmc$ is the point-hyperplane design of $\PG_{n-1}(q)$ with parameters set $((q^{n}-1)/(q-1), (q^{n-1}-1)/(q-1), (q^{n-2}-1)/(q-1))$ and $\lambda=(q^{n-2}-1)/(q-1)$ prime, as desired.
    
    Suppose now that $H=P_{i}$ with $i\geq 2$. It follows from \eqref{eq:v} and \cite[p. 534]{a:Regueiro-classical} that
    \begin{align}\label{eq:A-ii-1-L-v-1}
    v=\frac{(q^{n}-1)(q^{n-1}-1)\cdots(q^{n-i+1}-1)}{(q^{i}-1)\cdots(q^2-1)(q-1)}>q^{i(n-i)}.
    \end{align}
    Then by Lemmas \ref{lem:New} and \ref{lem:six}(b), the parameter $k$ divides $|\Out(X)|\cdot|H_{0}|$, where $|H_{0}|=q^{n(n-1)/2}\gcd(n, q-1)^{-1}\cdot \prod_{j=2}^{n-i}(q^{j}-1)\cdot\prod_{j=1}^{i}(q^{j}-1)$ and $|\Out(X)|=2a \cdot \gcd(n, q-1)$. Note that $\lambda$ is an odd prime divisor of $k$. Then $\lambda$ must divide $a$, $p$, $q-1$ or $(q^{j}-1)/(q-1)$, for some $j\in \{2, \ldots, n-i\}$, and so
    \begin{align}\label{eq:lam-A-ii-1-L-1}
    \lambda\leq (q^{n-i}-1)/(q-1).
    \end{align}
    Here by Lemma \ref{lem:six}(c) and \cite[Corollary 2]{a:Korableva-LnUn}, the parameter $k$ divides $\lambda d_{i,j}(q)$, where
    \begin{equation}\label{eq:Sub-A-ii-L}
    d_{i, j}(q)=q^{j^2}\cdot \prod_{l=i-j+1}^{i}(q^{l}-1)\cdot \prod_{l=n-j-i+1}^{n-i}(q^{l}-1)\cdot \prod_{l=1}^{j}(q^{l}-1)^{-2},
    \end{equation}
    for $l=1, 2,\ldots, i$. Therefore, $k$ divides $\lambda d_{i,1}(q)$, where $d_{i,1}(q)=q(q^i-1)(q^{n-i}-1)(q-1)^{-2}$. Then by \eqref{eq:A-ii-1-L-v-1} and Lemma~\ref{lem:six}(b), we have that
    \begin{align*}
    \lambda q^{i(n-i)}<\lambda v< k^{2}\leq \lambda^2 q^2(q^i-1)^2(q^{n-i}-1)^2(q-1)^{-4}.
    \end{align*}
    Thus $q^{i(n-i)}\cdot (q-1)^4 <\lambda q^2(q^i-1)^2(q^{n-i}-1)^2$, and so \eqref{eq:lam-A-ii-1-L-1} implies that $q^{i(n-i)}(q-1)^5< q^2(q^i-1)^2(q^{n-i}-1)^3<q^{3n-i+2}$. Thus
    \begin{align}\label{eq:psl-parab-1}
    q^{i(n-i)}(q-1)^5<q^{3n-i+2},
    \end{align}
    and hence $n(i-3)<i^2-i+2$. Note that $2i\leq n$. Thus  $2i(i-3)\leq n(i-3) <i^2-i-1$, and so $i^2<5i+2$. Hence $i=2, 3, 4, 5$.
    
    If $i=5$, then by \eqref{eq:psl-parab-1}, we have that $q^{2n-22}(q-1)^5<1$. Since $n\geq 2i=10$, the last inequality holds only for $(n,q)=(10, 2)$, in which case by \eqref{eq:A-ii-1-L-v-1}, $v=109221651$. Moreover, by Lemmas \ref{lem:New} and \ref{lem:six}(b), $k$ divides $|\Out(X)|\cdot|H_{0}|$. Thus $k$ is a divisor of $6710027434028590694400$. It is easy to check that for possible $k$, the fraction $k(k-1)/(v-1)$ is not a prime number.
    
    If $i=4$, then \eqref{eq:psl-parab-1} implies that $q^{n-14}(q-1)^5<1$, and so  $n\in \{8,9,10,11,12,13,14\}$ as $n\geq 2i=8$. Note by \eqref{eq:A-ii-1-L-v-1} that $q$ is even as $v$ is odd. Then $\gcd(v-1,q^{2}+1)=1$. Recall by Lemma~\ref{lem:six} that $k$ divides $\lambda \gcd(v-1, d_{4,1})$. Then $v<\lambda\cdot [d_{4,1}/(q^{2}+1)]^{2}$, where $\lambda\leq (q^{n-4}-1)/(q-1)$,  and hence
    \begin{align}\label{eq:parab-i=4}
    v<(q-1)^{-1}(q^{n-4}-1)\cdot [d_{4,1}/(q^{2}+1)]^{2}.
    \end{align}
    For each possible $n$, by straightforward calculation,  we observe that  \eqref{eq:parab-i=4} does not hold.
    
    If $i=2$, then $G$ is a rank $3$ primitive group, see \cite{a:Kantor-rank3}.  The symmetric designs admitting primitive rank $3$ automorphism groups have been classified by Dempwolff \cite{a:Dempwolff2001}. Running through all these possible cases, we can not find any such symmetric design with $\lambda\geq 5$ prime.

    If $i=3$, then \eqref{eq:A-ii-1-L-v-1} implies that
    \begin{align*}
    v=\frac{(q^n-1)(q^{n-1}-1)(q^{n-2}-1)}{(q^3-1)(q^2-1)(q-1)}>q^{3n-9}.
    \end{align*}
    We now consider the following cases:\smallskip
    
    \noindent \textbf{(1.1)} Let $q$ be odd. If $n$ is even, then $v$ is also even, which is impossible. Therefore, $n$ is odd. Note by \eqref{eq:Sub-A-ii-L},  \cite[p. 338]{a:Saxl2002} and Lemma \ref{lem:six}(c) that $k$ divides $\lambda \gcd(d_{3,1}(q), d_{3,2}(q))$, where $d_{3,1}(q)=q(q^{2}+q+1)(q^{n-3}-1)(q-1)^{-1}$ and $d_{3,2}(q)=q^{4}(q^{2}+q+1)(q^{n-4}-1)(q^{n-3}-1)(q-1)^{-1}(q^{2}-1)^{-1}$. Therefore,
    \begin{align}\label{eq:k-A-ii-L}
    k  \text{ divides } \lambda f(q),
    \end{align}
    where $f(q)=q(q^2+q+1)(q^{n-3}-1)(q^2-1)^{-1}$. Then by Lemma \ref{lem:six}(b), we have that $\lambda q^{3n-9}<\lambda v<k^2\leq \lambda^2 q^2(q^2+q+1)^2(q^{n-3}-1)^2(q^2-1)^{-2}$. Thus
    \begin{align}\label{eq:A-ii-L}
    q^{3n-9}(q^2-1)^2<\lambda q^2(q^2+q+1)^2(q^{n-3}-1)^2.
    \end{align}
    Since $\lambda$ is an odd prime divisor of $k$, Lemmas \ref{lem:New} and \ref{lem:six}(b) imply that $\lambda$  divides $a$, $p$ , $q-1$ or $(q^{j}-1)(q-1)^{-1}$, for some $j\in\{2, 3, \ldots, n-3\}$.
    
    Suppose first that $\lambda$ divides $a$, $p$ or $q-1$. Then by \eqref{eq:A-ii-L}, we have that $q^{3n-9}(q^2-1)^2<q^3(q^2+q+1)^2(q^{n-3}-1)^2$, and so $q^{3n-9}(q^2-1)^2<q^{2n-3}(q^2+q+1)^2$. Hence $q^{n-6}(q^2-1)^2<(q^2+q+1)^2$. Since $(q^2+q+1)^2<q(q^2-1)^2$, we conclude that $q^{n-6}<q$, which is impossible as  $n\geq 2i=6$ is odd.
    
    Suppose now that
    \begin{align}\label{eq:lam-A-ii-L}
    \lambda \quad \text{divides} \quad (q^{j}-1)(q-1)^{-1},
    \end{align}
    for some $j\in \{2, 3, \ldots, n-3\}$. Since $q^{n-3}-1<q^{n-3}$ and $q^{j}-1<q^{j}$, it follows from \eqref{eq:A-ii-L} that $q^{3n-9}(q-1)(q^2-1)^2<q^{2n+j-4}(q^2+q+1)^2$, and so $q^{n-j-5}<(q^2+q+1)^2/[(q-1)(q^2-1)^2]$. As $(q^2+q+1)^2<q(q-1)(q^2-1)^2$, we conclude that $q^{n-j-6}<1$, and so $j>n-6$.  Since $j\leq n-3$, we have that $j\in\{n-5, n-4, n-3\}$, where $n$ is odd. We now consider the following two subcases.\smallskip
    
    \noindent \textbf{(1.1.1)} Let $j=n-3$ or $n-5$. Note that $j$ is even and $\lambda$ divides $q^{j}-1$ by \eqref{eq:lam-A-ii-L}. Since $\lambda$ is prime, it follows that $\lambda \leq q^{(n-3)/2}+1$, and so  \eqref{eq:A-ii-L} yields $q^{n-9}<(q^2+q+1)^4(q^2-1)^{-4}$. Since $(q^2+q+1)^4<q^2(q^2-1)^4$, we have that $q^{n-9}<q^2$, or equivalently, $q^{n-11}<1$. Since also $n>6$ is odd, we conclude that $n=7, 9, 11$. Then by \eqref{eq:A-ii-1-L-v-1}, we can obtain $v$. Note for these parameters $v$ that $\gcd(v-1,q^{2}+q+1)=1$. Since by Lemma~\ref{lem:six}, the parameter $k$ divides $\lambda \gcd(v-1, d_{3,1})$, we conclude by \eqref{eq:lam-A-ii-L} that $v<(q-1)^{-1}(q^{n-3}-1)\cdot [d_{3,1}/(q^{2}+q+1)]^{2}$, but for each possible $n$, this inequality does not hold for $q\geq 3$.\smallskip

    \noindent \textbf{(1.1.2)} Let $j=n-4$. Then by \eqref{eq:lam-A-ii-L}, the parameter $\lambda$ divides $(q^{n-4}-1)(q-1)^{-1}$. Let $u$ be a positive integer  such that $\lambda u=(q^{n-4}-1)(q-1)^{-1}$. Note that $(q^{n-4}-1)(q-1)^{-1}$ is odd, and so $u$ is an odd number. Here by \eqref{eq:A-ii-L} and \eqref{eq:lam-A-ii-L}, $uq^{3n-9}(q-1)(q^2-1)<q^2(q^2+q+1)^2(q^{n-3}-1)^2(q^{n-4}-1)<(q^2+q+1)^2 q^{3n-8}$, and so $u\cdot (q-1)(q^2-1)^2<q(q^2+q+1)^2$. This inequality holds only for $u=1$ or $(u,q)=(3,3)$. In the latter case, since $\lambda u=(q^{n-4}-1)(q-1)^{-1}$, it follows that $u=3$ divides $q^{n-5}+q^{n-6}+\ldots+q+1$, where $q=3$, which is impossible. Therefore, $u=1$, and hence $\lambda=q^{n-5}+q^{n-6}+\ldots+q+1$. Thus  by \eqref{eq:k-A-ii-L}, the parameter $k$ divides $\lambda f(q)$, where $f(q)=q(q^2+q+1)(q^{n-3}-1)(q^2-1)^{-1}$. Let now $m$ be a positive integer such that $mk=\lambda f(q)$. Then by Lemma \ref{lem:six}(a), we have that $k=1+m\cdot (v-1)/f(q)$. Note by \eqref{eq:A-ii-1-L-v-1} that $v-1=g(q)\cdot q(q^{n-3}-1)(q^3-1)^{-1}(q^2-1)^{-1}(q-1)^{-1}$, where $g(q)=q^{2n-1}+q^{n+2}-q^{n+1}-q^{n}-q^{n-1}+q^5-q^4-q^3+q+1$. Therefore, $k=1+[m\cdot g(q)/(q^3-1)^2]>mq^{2n-7}$. Since $k$ divides $q(q^2+q+1)(q^{n-4}-1)(q^{n-3}-1)(q^2-1)^{-1}(q-1)^{-1}$, we conclude that
    \begin{align*}
    m\cdot q^{2n-7}<q(q^2+q+1)(q^{n-4}-1)(q^{n-3}-1)(q^2-1)^{-1}(q-1)^{-1},
    \end{align*}
    and so $m\cdot (q-1)(q^2-1)<q(q^2+q+1)$. This inequality holds only for $m=1, 2$. Let now $r(q)=(q+1)(q^3-1)^2$ and $h(q)=q^{n+3}+q^{7}+q^{6}-q^{5}-q^{4}-q^{3}$. Then
    \begin{align}\label{lem31-1}
    (q^{3}-1)^2\cdot k=m\cdot g(q)+(q^{3}-1)^2=m\cdot h(q)(q^{n-4}-1)+m\cdot r(q)+(q^{3}-1)^2.
    \end{align}
    Since $\lambda=(q^{n-4}-1)/(q-1)$ is an odd prime divisor of $k=[m\cdot g(q)+(q^{3}-1)^2]/(q^{3}-1)^{2}$, it follows from \eqref{lem31-1} that $q^{n-4}-1$ divides $m\cdot r(q)+(q^{3}-1)^2=[m\cdot (q+1)+1](q^3-1)^2$. Recall that $m\leq 2$. Therefore, $q^{n-4}-1\leq [m\cdot (q+1)+1](q^3-1)^2\leq (2q+3)(q^3-1)^2$. Since $q$ is odd, we conclude that $2q+3\leq q^2$, and so $q^{n-4}-1\leq q^{8}$. This inequality holds only for $n=7, 9, 11$. By the same manner as in the previous cases, we observe that $v-1$ is coprime to $q^{2}+q+1$, and since $\lambda\leq (q-1)^{-1}(q^{n-4}-1)$, it follows that $v<(q-1)^{-1}(q^{n-4}-1)\cdot [d_{3,1}/(q^{2}+q+1)]^{2}$ for $n\in \{7,9,11\}$. But for each possible $n$, this inequality does not hold for any $q\geq 3$.\smallskip
    
    \noindent \textbf{(1.2)} Let $q$ be even. Then \eqref{eq:Sub-A-ii-L} and Lemma \ref{lem:six}(c) imply that
    \begin{align}\label{eq:k-B-L}
    k \text{ divides } \lambda f(q),
    \end{align}
    where $f(q):=d_{3,1}=q(q^2+q+1)(q^{n-3}-1)/(q-1)$. Note that $\lambda$ is an odd prime divisor of $a$ or $q^{j}-1$ with $j\in\{1,\ldots, n-3\}$.\smallskip
    
    Suppose first that $\lambda$ divides $a$ or $q-1$. Then by Lemma \ref{lem:six}(b), we have that $\lambda v<k^2\leq \lambda^2 q^2(q^2+q+1)^2(q^{n-3}-1)^2/(q-1)^2$. Thus $q^{3n-9}(q-1)^2<\lambda q^2(q^2+q+1)^2(q^{n-3}-1)^2<q^3(q^2+q+1)^2(q^{n-3}-1)^2$, and so $q^{3n-9}(q-1)^2<q^{2n-3}(q^2+q+1)^2$. Hence $q^{n-6}(q-1)^2<(q^2+q+1)^2$. Since $(q^2+q+1)^2<q^6(q-1)^2$, we conclude that $q^{n-6}<q^{6}$, and so $n=6,7, 8,9, 10$. Define
    \begin{align*}
    d_{n}(q)=\left\{
    \begin{array}{ll}
    f(q)/(q^{2}+q+1)^{2} , & \hbox{if $n=6,9$;} \\
    f(q)/(q^{2}+q+1) , & \hbox{if $n=7,8,10$.}
    \end{array}
    \right.
    \end{align*}
    Note that $\lambda\leq q-1$. Then by the same manner as before, we must have $v<(q-1)\cdot d_{n}(q)^{2}$. By solving this inequality for $n\in\{6,\ldots,10\}$, we conclude that $q=2$ when $n\in\{7,8,10\}$. In these cases, however, $\lambda\leq \max\{a,q-1\}=1$, which is a contradiction.\smallskip
    
    Suppose now that $\lambda$ divides $q^{j}-1$, for some $j\in \{2,\ldots, n-3\}$. Therefore, $\lambda \leq q^{j}-1$. By \eqref{eq:k-B-L} and Lemma \ref{lem:six}(b), we have that $\lambda v<k^2\leq \lambda^2 q^2(q^2+q+1)^2(q^{n-3}-1)^2/(q-1)^2$. Thus
    \begin{align}\label{eq:eq-B-L-2}
    q^{3n-9}(q-1)^2<\lambda q^2(q^2+q+1)^2(q^{n-3}-1)^2.
    \end{align}
    Recall that $\lambda \leq q^{j}-1$. Hence $q^{3n-9}(q-1)^2<(q^2+q+1)^2 q^{2n+j-4}$. Since $(q^2+q+1)^2<q^6(q-1)^2$, we conclude that $q^{3n-9}<q^{2n+j+2}$, and so $j>n-11$.  Since $j\leq n-3$, we have that $j\in\{n-10,n-9,\ldots,n-3\}$. Recall that $\lambda$ divides $q^{j}-1$. Let $u$ be a positive integer such that
    \begin{align}\label{eq:B-L-lam}
    \lambda=\frac{q^{j}-1}{u}.
    \end{align}
    Let now $m$ be a positive integer such that $mk=\lambda f_{n}(q)$ , where
    \begin{align*}
    f_{n}(q)=q(q^2+q+1)(q^{n-3}-1)/(q-1).
    \end{align*}
    Then by Lemma \ref{lem:six}(a), we have that $k=1+m(v-1)/f_{n}(q)$. Note that $v-1=q(q^{n-3}-1)g_{n}(q)(q^3-1)^{-1}(q^2-1)^{-1}(q-1)^{-1}$, where $g_{n}(q)=q^{2n-1}+q^{n+2}-q^{n+1}-q^{n}-q^{n-1}+q^5-q^4-q^3+q+1$. Therefore,
    \begin{align}\label{eq:B-L-k}
    k=1+\frac{m\cdot g_{n}(q)}{(q+1)(q^3-1)^2}>m\cdot q^{2n-9}.
    \end{align}
    Since $k\leq \lambda f_{n}(q)$ and $\lambda=(q^{j}-1)/u$, we conclude that $mq^{2n-9}<k\leq q(q^2+q+1)(q^{j}-1)(q^{n-3}-1)/[u\cdot (q-1)]$, and so $mu\cdot q^{n-j-7}(q-1)<(q^2+q+1)$. Hence,
    \begin{align}\label{eq:B-L-1-mu}
    mu<q^{j-n+7}.
    \end{align}
    Since $mu\geq 1$, it follows  that $q^{j-n+7}>1$, and so $j-n+7>0$. Therefore,
    \begin{align}\label{eq:parab-j}
    j=n-t \text{ with $t\in\{3,\ldots,6\}$.}
    \end{align}
    
    Let $h_{j}(q)$ and $r_{j}(q)$ be as in the second and third columns of Table \ref{tbl:B-L-1}. Then $g_{n}(q)=h_{j}(q)\cdot (q^{j}-1)+r_{j}(q)$, and so
    \begin{align}
    \nonumber    (q+1)(q^{3}-1)^2\cdot k&=m\cdot g_{n}(q)+(q+1)(q^{3}-1)^2\\
    &=m\cdot h(q)(q^{j}-1)+m\cdot r(q)+(q+1)(q^{3}-1)^2.\label{eq:h-d-g}
    \end{align}
    For $j$ as in \eqref{eq:parab-j}, we observe that $|m\cdot r_j(q)+(q^{3}-1)^2(q+1)|>0$.  Since $\lambda=(q^{j}-1)/u$ is a divisor of $k$, it follows from \eqref{eq:h-d-g} that $(q^{j}-1)/u$ divides $|m\cdot r_j(q)+(q+1)(q^{3}-1)^2|$, where $mu< q^{j-n+7}$ and $r_j(q)$ is as in Table~\ref{tbl:B-L-1}.
    Since $|r_j(q)+(q+1)(q^{3}-1)^2|<q^{10}$, we have that $q^{j}-1<muq^{10}$, and so by \eqref{eq:B-L-1-mu}, we have that $q^{n-17}<1$. This inequality holds only for $n=6,\ldots, 16$. Define
    \begin{align*}
    d_{n}(q)=\left\{
    \begin{array}{ll}
    9\cdot f_{n}(q)/(q^{2}+q+1)^{2} , & \hbox{if $n=6,9,12,15$;} \\
    3\cdot f_{n}(q)/(q^{2}+q+1) , & \hbox{if $n=7,8,10,11,13,14,16$.}
    \end{array}
    \right.
    \end{align*}
    Note that $\lambda\leq q^{n-3}-1$. Since $k$ divides $\lambda\cdot \gcd(v-1,f_{n}(q))$ which is a divisor of $\lambda d_{n}(q)$, the inequality $\lambda v<k^{2}$ implies that $v<(q^{n-3}-1)\cdot d_{n}(q)^{2}$, and considering each possible $n$, we conclude that $q\in\{2,4,8\}$. For each $q$, we obtain the parameter $v$ by \eqref{eq:A-ii-1-L-v-1}, and considering all divisors $k$ of $|\Out(X)|\cdot |H_{0}|$, we observe that $k(k-1)/(v-1)$ is not prime, which is a contradiction.\smallskip
    
    \begin{table}
        \centering
        \small
        \caption{The polynomials $h(q)$ and $r(q)$ as in Case 1.2 of Proposition~\ref{prop:psl}.}\label{tbl:B-L-1}
        \begin{tabular}{cll}
            \noalign{\smallskip}\hline\noalign{\smallskip}
            $j$ &
            $h_{j}(q)$ &
            $r_{j}(q)$
            \\
            \noalign{\smallskip}\hline\noalign{\smallskip}
            $n-3$ &
            $q^{n+2}+2q^{5}-q^{4}-q^{3}-q^{2} $ &
            $ 3q^{5}-2q^{4}-2q^{3}-q^{2}+q+1 $
            \\
            $n-4$ &
            $ q^{n+3}+q^{7}+q^{6}-q^{5}-q^{4}-q^{3} $ &
            $ q^{7}+q^{6}-2q^{4}-2q^{3}+q+1 $
            \\
            $n-5$ &
            $ q^{n+4}+q^{9}+q^{7}-q^{6}-q^{5}-q^{4} $ &
            $ q^{9}+q^{7}-q^{6}-2q^{4}-q^{3}+q+1 $
            \\
            $n-6$ &
            $ q^{n+5}+q^{11}+q^{8}-q^{7}-q^{6}-q^{5}+q^{2} $ &
            $ q^{8}-q^{7}-q^{6}-q^{4}-q^{3}+q^{2}+q+1 $
            \\
            \noalign{\smallskip}\hline\noalign{\smallskip}
        \end{tabular}
    \end{table}

    \noindent \textbf{(2)} Let $H$ be the stabilizer of a pair $\{U,W\}$ of subspaces of dimension $i$ and $n-i$ with $2i<n$ and $U\leq W$.
    Then by~\cite[Proposition 4.1.22]{b:KL-90}, the subgroup $H_{0}$ is isomorphic to $\,^{\hat{}}[q^{2in-3i^2}]\cdot \SL_{i}(q)^2\times \SL_{n-2i}(q)\cdot  (q-1)^2$. It follows from~\eqref{eq:v} and Lemma~\ref{lem:up-lo-b} that $v>q^{i(2n-3i)}$. We note here that Lemma~\ref{lem:subdeg} is still true in this case. Then there is a subdegree which is a power of $p$. On the other hand, if $p$ is odd, then the $p$-part $(v-1)_p$ of $v-1$ is $q$. Then by Lemma \ref{lem:six}(c), $k$ divides $\lambda q$. Hence  Lemma \ref{lem:six}(b) implies that $\lambda q^{i(2n-3i)}<\lambda v<k^2\leq \lambda^2 q^{2}$, and so
    \begin{align}\label{eq:eq-A-ii-3-1}
    q^{i(2n-3i)}<\lambda q^2.
    \end{align}
    Note that $\lambda$ is an odd prime divisor of $2a\cdot \gcd(n, q-1)\cdot |H_{0}|$. It follows from Lemmas \ref{lem:New} and \ref{lem:six}(b) that $\lambda$ divides $a$, $p$ or $q^{j}-1$, where $j\leq n-2$. Then  $\lambda \leq q^{n-2}-1$, and so \eqref{eq:eq-A-ii-3-1} implies that $q^{i(2n-3i)}<q^2(q^{n-2}-1)$. Thus $n(2i-1)<3i^2$. Since $n>2i$, we have that $i^2<2i$. This inequality holds only for $i=1$, in which case by \eqref{eq:eq-A-ii-3-1}, we conclude that $q^{2n-3i}<\lambda q^2$, where $\lambda \leq q^{n-2}-1$. Then $q^{2n-3}<q^{n}$, and so $n<3$, which is a contradiction.\smallskip
    
    \noindent \textbf{(3)} Let $H$ be the stabilizer of a pair $\{U,W\}$ of subspaces of dimension $i$ and $n-i$ with $2i<n$ and $V=U\oplus W$. Then by~\cite[Proposition 4.1.4]{b:KL-90}, the subgroup $H_{0}$ is isomorphic to $\,^{\hat{}}\SL_{i}(q)\times \SL_{n-i}(q)\cdot (q-1)$. We first show that $i\neq 1$. If $i=1$, then by \eqref{eq:v}, we have that $v=q^{n-1}(q^{n}-1)/(q-1)$. Note by \cite[p. 339]{a:Saxl2002} that $k$ divides $\lambda q^{n-2}(q^{n-1}-1)/(q-1)$. On the other hand, by Lemmas \ref{lem:six}(a) and~\ref{lem:Tits}, the  parameter $k$ divides $\lambda (v-1)$ and $v-1$ is coprime to $q$. Thus
    \begin{align}\label{eq:k-A-ii-3-2}
    k \text{ divides } \lambda (q^{n-1}-1)/(q-1).
    \end{align}
    We now apply Lemma \ref{lem:six}(b) and conclude that $q^{n-1}(q^{n}-1)<\lambda (q^{n-1}-1)^2/(q-1)$. Note that $\lambda$ is an odd prime divisor of $2a\cdot \gcd(n, q-1)\cdot |H_{0}|$. Then Lemmas \ref{lem:New} and \ref{lem:six}(b) imply that $\lambda$ divides $a$, $p$ or $q^{j}-1$, with $j\leq n-1$. If $\lambda$ divides $a$, $p$ or $q-1$, then the inequality $q^{n-1}(q^{n}-1)<\lambda (q^{n-1}-1)^2/(q-1)$ yields $q^{n-1}(q^{n}-1)(q-1)<q(q^{n-1}-1)^2$, which is impossible. Therefore,
    \begin{align}\label{eq:lam-A-ii-3-2}
    \lambda \text{ divides } (q^{j}-1)/(q-1),
    \end{align}
    for some $j\in\{2, \ldots, n-1\}$. By \eqref{eq:k-A-ii-3-2}, $k$ divides $\lambda (q^{n-1}-1)/(q-1)$. Let $u$ be a positive integer such that $uk=\lambda f_{n}(q)$, where $f_{n}(q)=(q^{n-1}-1)/(q-1)$. Since $v-1=(q^{n-1}-1)(q^{n}+q-1)/(q-1)$, by Lemma \ref{lem:six}(a), we have that
    \begin{align}\label{eq:k-lam-A-ii-2}
    k=u\cdot (q^{n}+q-1)+1 \text{ and }  \lambda=u^2q(q-1)+\frac{u^2(2q-1)+u}{f_{n}(q)}.
    \end{align}
    Recall that $k$ divides $\lambda f_{n}(q)$. So \eqref{eq:k-lam-A-ii-2} implies that $u\cdot (q^{n}+q-1)+1$ divides $(q^{j}-1)(q^{n-1}-1)/(q-1)^2$. Since
    \begin{align*}
    u\cdot (q^{j}-1)(q^{n-1}-1)=q^{j-1}[u\cdot (q^{n}+q-1)+1]-u\cdot (q^{n-1}+2q^{j}-q^{j-1}-1)-q^{j-1},
    \end{align*}
    we have that $u\cdot (q^{n}+q-1)+1$ divides $u\cdot (q^{n-1}+2q^{j}-q^{j-1}-1)+q^{j-1}$. Thus $u\cdot (q^{n}+q-1)+1\leq u\cdot (q^{n-1}+2q^{j}-q^{j-1}-1)+q^{j-1}$, and so $q^{n}+q\leq q^{n-1}+2q^{j}$. Note that $j\leq n-1$. Then $q^{n}+q\leq 3q^{n-1}$, which is impossible. Therefore, $i\geq 2$. In this case, by~\cite[p. 340]{a:Saxl2002}, we have that $v>q^{2i(n-i)}$.  It follows from \cite[p. 339-340]{a:Saxl2002} that $k$ divides $\lambda (q^{i}-1)(q^{n-i}-1)$. Hence by Lemma \ref{lem:six}(b), we have that $\lambda q^{2i(n-i)}<\lambda v<k^2\leq \lambda^2(q^{i}-1)^2(q^{n-i}-1)^2$, and so
    \begin{align}\label{eq:eq-A-ii-3-2}
    q^{2i(n-i)}<\lambda q^{2n}.
    \end{align}
    Note that $\lambda$ is an odd prime divisor of $k$ dividing $2a\cdot \gcd(n, q-1)\cdot |H_{0}|$. Then Lemmas \ref{lem:New} and \ref{lem:six}(b) imply that $\lambda$ is a divisor of $a$, $p$ or $q^{j}-1$, for some $j\leq n-i$. Thus
    \begin{align}\label{eq:lam-A-ii-3-2-1}
    \lambda \leq (q^{n-i}-1)/(q-1),
    \end{align}
    and by \eqref{eq:eq-A-ii-3-2}, we have that $q^{2i(n-i)}(q-1)<q^{2n}(q^{n-i}-1)$. Therefore, $2i(n-i)<3n-i$, and hence $n(2i-3)<2i^2-i$. This implies that $(n,i)=(5,2)$, in which case $v=q^6(q^5-1)(q^2+1)/(q-1)$ and $k$ divides $\lambda (q^2-1)(q^3-1)$. Then by Lemma \ref{lem:six}(a), the parameter $k$ divides $\lambda \gcd(v-1, (q^2-1)(q^3-1))$. Since $\gcd(v-1, q+1)=1$, we conclude that $k$  divides $\lambda (q-1)^2(q^2+q+1)$. Then the inequality $\lambda v<k^2$ and \eqref{eq:lam-A-ii-3-2-1} yields $q^6(q^5-1)(q^2+1)<(q-1)^2(q^{3}-1)^3$, which is impossible.\smallskip
    
    \noindent \textbf{(4)} Here $V=V_{1} \oplus \cdots \oplus V_{t}$ with $\dim (V_{j})=i$ and $n=it$. By~\cite[Proposition 4.2.9]{b:KL-90}, the subgroup $H_{0}$ is isomorphic to $\,^{\hat{}}\SL_{i}(q)^t\cdot (q-1)^{t-1}\cdot  \Sym_{t}$. It follows from \cite[p.12]{a:ABCD-PrimeRep} that $v>q^{n(n-i)}/(t!)$. Let $i=1$. By~\cite[p. 340]{a:Saxl2002}, we have that $k$ divides $2 \lambda n(n-1)(q-1)$. Then Lemma \ref{lem:six}(b) implies that $\lambda q^{n(n-1)}/(n!)<\lambda v<k^2\leq \lambda^2 4n^2(n-1)^2(q-1)^2$. Therefore,
    \begin{align}\label{eq:eq1-A-iii-L}
    q^{n(n-1)}<4\lambda \cdot (n!)\cdot n^2(n-1)^2(q-1)^2.
    \end{align}
    Since $\lambda$ is an odd prime divisor of $k$, by Lemmas \ref{lem:New} and \ref{lem:six}(b), $\lambda$ must divide $a$, $n!$ or $q-1$. Then $\lambda\leq \max\{a, n, q-1\}$, and so $\lambda<n\cdot (q-1) $. Thus by \eqref{eq:eq1-A-iii-L}, we conclude that
    \begin{align}\label{eq:A-ii-2-2-SL}
    q^{n(n-1)}<4n^3\cdot (n!)\cdot (n-2)^2(q-1)^3.
    \end{align}
    It follows from Lemma \ref{lem:factor-bound} that $q^{n(n-1)}<2^{[4n(n-3)+6]/3}\cdot  n^5(q-1)^3$. Since $n^5<2^{3n}$, we conclude that $q^{3n^2-3n-9}<2^{4n^2-3n+6}$, and so $3n^2-3n-9\leq (3n^2-3n-9)\cdot \log_{p}q \leq (4n^2-3n+6)\cdot \log_{p}2\leq (4n^2-3n+6)\cdot \log_{3}2< (4n^2-3n+6)\times  0.7$. Hence $2n^2-9n<132$. This inequality holds only for $n=6, 7, 8, 9, 10$. However, for each such value of $n$, the inequality \eqref{eq:A-ii-2-2-SL} does not hold, which is a contradiction. Therefore, $i\geq 2$, in which case by~\cite[p. 340]{a:Saxl2002}, $k$ must divide $\lambda t(t-1)(q^i-1)^2(q-1)^{-1}$. Then Lemma \ref{lem:six}(b) implies that $\lambda q^{n(n-i)}/(t!)<\lambda v<k^2\leq \lambda ^2 t^2(t-1)^2(q^i-1)^4(q-1)^{-2}$. Therefore,
    \begin{align}\label{eq:eq2-A-iii-L}
    q^{n(n-i)}\cdot (q-1)^2<\lambda\cdot  (t!)\cdot t^2\cdot (t-1)^2(q^i-1)^4.
    \end{align}
    Since $\lambda$ is an odd prime divisor of $k$, by Lemmas \ref{lem:New} and \ref{lem:six}(b), $\lambda$ must divide $a$, $p$ , $t!$ or $q^{j}-1$ for some $j\leq i$, and so $\lambda\leq \max\{a, p, t, (q^{i}-1)(q-1)^{-1}\}$, consequently
    \begin{align}\label{eq:lam-A-iii-2-SL}
    \lambda<t\cdot (q^{i}-1)(q-1)^{-1}.
    \end{align}
    Then \eqref{eq:eq2-A-iii-L} implies that $q^{n(n-i)}\cdot (q-1)^3<t^5\cdot (t!)\cdot (q^i-1)^5$. If $t\geq 4$, then by Lemma \ref{lem:factor-bound}(b), we have that $t!<2^{4t(t-3)/3}$, and hence  $q^{3n(n-i)}\cdot (q-1)^{9}<2^{4t(t-3)}\cdot t^{15}\cdot (q^{i}-1)^{15}$. Since $t^{15}<2^{9t}$ and $q^{i}-1<q^{i}$, it follows that
    \begin{align}\label{eq:A-iii-2-3-SL}
    q^{3n^2-3i(n+5)+3}<2^{4t^2-3t},
    \end{align}
    where $n=it$. Therefore, $t^2(3i^2-4)+3<3t(i^2-1)+15i<3ti(i+5)$, and so $t(3i^2-4)<3i(i+5)$. This inequality holds only for $(i, t)\in\{(2, 4), (2, 5)\}$. For these pairs of $(i, t)$, we can easily observe that the inequality \eqref{eq:A-iii-2-3-SL} does not hold, which is a contradiction. Hence $t=2, 3$. If $t=2$, then \eqref{eq:eq2-A-iii-L} and \eqref{eq:lam-A-iii-2-SL} imply that $q^{2i^2}\cdot (q-1)^3<16\cdot (q^i-1)^5$. As  $(q-1)^3\geq 8$, we conclude that $q^{2i^2-5i-1}<1$, and so $i=2$ for which $n=2i=4$, which is impossible. If $t=3$, then by \eqref{eq:eq2-A-iii-L} and \eqref{eq:lam-A-iii-2-SL}, we have that $q^{6i^2}\cdot (q-1)^3<2^3\cdot 3^4\cdot (q^i-1)^5$, and so $q^{6i^2}<q^{5i+4}$, which is impossible.\smallskip

    \noindent \textbf{(5)} Let $H = N_{G}(X(q_{0}))$ with $q=q_{0}^t$ odd and $t$ odd prime. Then  by~\cite[Proposition 4.5.3]{b:KL-90}, the subgroup $H_{0}$ is isomorphic to
    \begin{align*}
    \,^{\hat{}}\SL_{n}(q_{0})\cdot \gcd((q-1)(q_{0}-1)^{-1}, n)
    \end{align*}
    with $q=q_{0}^t$. Note that $|\Out(X)|=2a\cdot \gcd(n, q-1)$. Since $|X|<|\Out(X)|^2\cdot |H_{0}|^3$ by Corollary \ref{cor:large}, it follows from Lemma~\ref{lem:up-lo-b} that $q_{0}^{t(n^2-2)}<4a^2\cdot q_{0}^{3n^2}(q_{0}^t-1)^3$. As $a^2<2q$, we have that $q_{0}^{n^2(t-3)-6t}<8$. Since also $q_{0}$ is odd, it follows that $3^{n^2(t-3)-6t}\leq q_{0}^{n^2(t-3)-6t}<8<3^2$, and so $3^{n^2(t-3)-6t}<3^2$. Therefore, $t(n^2-6)<3n^2+2$. If $t\geq 5$, then $5(n^2-6)\leq t(n^2-6)<3n^2+2$, and so $n^2<16$, which is impossible as $n\geq 5$. Therefore, $t=3$. In this case by \eqref{eq:v} and Lemma \ref{lem:up-lo-b}, we conclude that $v>q_{0}^{2n^2-9}$. It follows from Lemma \ref{lem:six}(a)-(c) that $k$ divides $\lambda \gcd(v-1,|\Out(X)|\cdot |H_{0}|)$. By Tits' Lemma~\ref{lem:Tits} $v-1$ is coprime to $q_{0}$, and so $k$ must divide $2\lambda a\cdot  g(q_{0})$, where $g(q_{0})=(q_{0}^{n}-1) \cdots (q_{0}^{2}-1)\cdot \gcd(q_{0}^2+q_{0}+1, n)$. Then by Lemma \ref{lem:six}(b), we have that $\lambda q_{0}^{2n^2-9}< \lambda v <k^{2}\leq 4a^2\lambda^2 (q_{0}^{n}-1)^2{\cdots}(q_{0}^{2}-1)^2\cdot (q_{0}^2+q_{0}+1)^2$. Note that $(q_{0}^2+q_{0}+1)^2<q_{0}^5$. So
    \begin{align}\label{eq:eq-A-i-L}
    q_{0}^{n^2-n-12}< 4\lambda a^2.
    \end{align}
    Note by Lemmas \ref{lem:New} and \ref{lem:six}(b) that $\lambda$ is  an odd prime divisor of $a$, $p$, $q_{0}-1$ or $(q_{0}^{j}-1)/(q_{0}-1)$ with $j\in \{2, 3, \ldots, n\}$, and so $\lambda \leq (q_{0}^{n}-1)/(q_{0}-1)$. Then by the inequality \eqref{eq:eq-A-i-L}, we conclude that $q_{0}^{n^2-2n-12}(q_{0}-1)<4a^2$. Recall that $a=ts=3s$. Then $q_{0}^{n^2-2n-12}(q_{0}-1)<36s^2$. Since $n\geq 5$, we have that $q_{0}^3(q_{0}-1)\leq q_{0}^{n^2-2n-12}(q_{0}-1)<36\cdot s^2$, and hence $q_{0}^3(q_{0}-1)<36\cdot s^2$, which is impossible.
\end{proof}

\begin{proposition}\label{prop:unitary}
    Let $\Dmc$ be a nontrivial symmetric $(v,k,\lambda)$ design with $\lambda\geq 5 $ prime. Suppose that $G$ is an automorphism group of $\Dmc$ of almost simple type with socle $X$. If $G$ is flag-transitive, then the socle $X$ cannot be $\PSU_{n}(q)$ with $n\geq 5$.
\end{proposition}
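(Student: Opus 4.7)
The plan is to follow the same template as the proof of Proposition~\ref{prop:psl}, working through the list of maximal subgroups of odd index given by Lemma~\ref{lem:odd-deg} and ruling out each candidate for the point-stabiliser $H$ using the flag-transitivity conditions in Lemma~\ref{lem:six}, the order bound in Corollary~\ref{cor:large}, the Tits Lemma~\ref{lem:Tits}, and the subdegree information from Lemma~\ref{lem:subdeg} and Lemma~\ref{lem:divisible}. Write $H_0 = H \cap X$. Since $v$ is odd and $X = \PSU_n(q)$ with $n \geq 5$, Lemma~\ref{lem:odd-deg} leaves the following cases: (1) $q$ even and $H_0$ a parabolic subgroup $P_i$ of $X$; (2) $H_0$ a nonsingular subspace stabiliser (stabilising either a nondegenerate or a totally singular subspace of $V$); (3) $H_0$ the stabiliser of an orthogonal decomposition $V = \bigoplus V_j$ with isometric summands; (4) $H = N_G(X(q_0))$ with $q = q_0^t$, $t$ an odd prime. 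In every case I will read the structure and order of $H_0$ from \cite{b:KL-90}, obtain a lower bound for $v = |X{:}H_0|$ from Lemma~\ref{lem:up-lo-b}, and bound $\lambda$ by the largest odd prime divisor that could appear in $|\Out(X)| \cdot |H_0|$.

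For the parabolic case I will use that $k$ divides $\lambda d$ for any nontrivial subdegree $d$ (Lemma~\ref{lem:six}(c)); Lemma~\ref{lem:subdeg} gives a subdegree that is a power of $p$, and the smallest Levi subdegree listed in \cite{a:Saxl2002,a:Korableva-LnUn} yields a polynomial $f(q)$ of low degree such that $k \mid \lambda f(q)$. Plugging into $\lambda v < k^2$ together with $\lambda \leq (q^{n-i}+1)/(q+1)$ (or a similar cyclotomic bound) I obtain an inequality $q^{e(n,i)}(q-1)^c < q^{e'(n,i)}$ forcing $(n,i)$ and $q$ into a finite list, after which Lemma~\ref{lem:six}(a) and direct divisibility arguments (of the type used in Case~1.1.2 of Proposition~\ref{prop:psl}, writing $k = 1 + m(v-1)/f(q)$ and forcing $\lambda = (q^j - (-1)^j)/(q - (-1))$ to divide a small remainder) finish off the residual configurations; the few surviving small parameter sets will be eliminated by a \textsf{GAP} check that $k(k-1)/(v-1)$ is never an odd prime $\geq 5$.

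For the nonsingular/totally singular subspace stabilisers and the imprimitive case $V = \bigoplus V_j$, I mirror Cases (2)--(4) of Proposition~\ref{prop:psl}: the structure theorems give $|H_0|$ as a product of unitary group orders with a wreath-type $\Sym_t$ factor, and Lemma~\ref{lem:up-lo-b}, Lemma~\ref{lem:equation} and Lemma~\ref{lem:factor-bound} provide sharp lower bounds for $v$ and upper bounds for $|H_0|$. Combining with $\lambda \leq (q^{n-i}+1)/(q+1)$ and the $k \mid \lambda d$ restriction with $d$ a known suborbit of the unitary action from \cite{a:Saxl2002}, the inequality $\lambda v < k^2$ reduces to a polynomial inequality in $q$ whose solutions $(n,i,q)$ are finite, and each is eliminated either by Lemma~\ref{lem:Tits} (when $\gcd(v-1,q) = 1$ kills the $p$-part of a hoped-for subdegree factor) or by an elementary divisibility contradiction from $k \mid \lambda \gcd(v-1, f(q))$.

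For the subfield case $H = N_G(X(q_0))$ with $q = q_0^t$, Corollary~\ref{cor:large} and Lemma~\ref{lem:up-lo-b} give $q_0^{t(n^2-2)} < 4a^2 \cdot q_0^{3n^2}(q_0^t+1)^3$ with $a = t s$, which forces $t = 3$ and bounds $q_0$ and $n$ very tightly; then Lemma~\ref{lem:Tits} (applicable since $H_0$ is non-parabolic and $q_0$ is odd in the relevant subcase) gives $\gcd(v-1,q) = 1$, so $k \mid 2\lambda a \cdot \prod_{j=2}^{n}(q_0^j - (-1)^j) \cdot \gcd(q_0^2 - q_0 + 1, n)$, and substituting into $\lambda v < k^2$ yields an inequality of the shape $q_0^{n^2 - 2n - c} < C \cdot s^2$ with $n \geq 5$ that has no solutions. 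The main obstacle I expect is the parabolic case and the totally singular subspace stabiliser in characteristic $2$: here $\lambda$ can a priori be as large as a primitive prime divisor of $q^{2(n-i)} - 1$, and, as happened in Case~1.2 of Proposition~\ref{prop:psl}, the natural inequalities do not immediately close, so I expect to introduce an auxiliary parameter $u$ with $\lambda u = (q^j - (-1)^j)/(q - (-1))$, derive a polynomial congruence on $k$ from Lemma~\ref{lem:six}(a), and bound $n$ to a small range for a final \textsf{GAP} verification.
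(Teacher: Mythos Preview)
Your proposal is correct and follows the paper's case-split and overall strategy. The one place where you diverge is the parabolic case~(1): the paper does not use Levi subdegrees or the auxiliary-$u$ machinery at all there. Since $q$ is even in this case, the power-of-$p$ subdegree $d=2^c$ supplied by Lemma~\ref{lem:subdeg}, combined with $k\mid\lambda(v-1)$ and the fact that $\lambda$ is odd, gives $k\mid\lambda\cdot(v-1)_2$; an explicit check shows $(v-1)_2\in\{q,q^2,q^3\}$ depending on $(n,i)$, so $k\leq\lambda q^3$ outright, and $\lambda v<k^2$ then forces $i\leq 3$ with only a handful of residual parameters to dispose of by direct arithmetic. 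Your planned route through Korableva subdegrees and the $u$-parameter arithmetic of Proposition~\ref{prop:psl} Case~1.2 would also succeed but is unnecessarily heavy here; the $(v-1)_2$ shortcut is the cleaner move. For the remaining cases (nonsingular subspace, orthogonal decomposition, subfield) your outline matches the paper's arguments closely, including the $uk=\lambda f(q)$ substitution for the nonsingular $i=1$ case and the Corollary~\ref{cor:large} bound for the subfield case. One small slip: in your case~(2) the phrase ``totally singular subspace'' does not belong---for $\PSU_n(q)$ with $q$ odd, Lemma~\ref{lem:odd-deg} lists only the nonsingular subspace stabilisers, the totally singular ones being exactly the parabolics already covered in your case~(1).
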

\begin{proof}
    Let $H_{0}=H\cap X$, where $H=G_{\alpha}$ with $\alpha$ a point of $\Dmc$.  Then by Lemma \ref{lem:six}(a), the parameter $v$ is odd, and so by Lemma \ref{lem:odd-deg}, one of the following holds:
    \begin{enumerate}[\rm (1)]
        \item $q$ is even, and $H_{0}$ is a parabolic subgroup of $X$;
        \item $q$ is odd, and $H$ is the stabilizer of a nonsingular subspace;
        \item $q$ is odd, and $H_{0}$ is the stabilizer of an orthogonal decomposition $V=\oplus V_{j}$ with all $V_{j}$'s isometric;
        \item $q=q_{0}^t$ is odd with $t$ odd prime, and $H = N_{G}(X(q_{0}))$.
    \end{enumerate}
    We analyse each of these possible cases separately and arrive at a contradiction in each case. \smallskip
    
    \noindent \textbf{(1)} Let $H_{0}$ be a parabolic subgroup of $X$. Note in this case that $q=2^a$ is even. By~\cite[Proposition 4.1.18]{b:KL-90}, the subgroup $H_{0}$ is isomorphic to $\,^{\hat{}}q^{i(2n-3i)}:\SL_{i}(q^2)\cdot\SU_{n-2i}(q)\cdot(q^2-1)$, for $i\leq \lfloor n/2\rfloor$. It follows from~\eqref{eq:v} and \cite{a:Saxl2002} that $v>q^{i(2n-3i)}$. By Lemma~\ref{lem:subdeg}, there is a unique subdegree $d=2^{c}$. Note that
    \begin{align*}
    (v-1)_{2}=\left\{
    \begin{array}{ll}
    q,& \hbox{if $n$ is even and $i = n/2$;} \\
    q^{3}, & \hbox{if $n$ is odd and $i = (n-1)/2$;} \\
    q^{2}, & \hbox{otherwise,}
    \end{array}
    \right.
    \end{align*}
    where $(v-1)_{2}$ is the $2$-part of $v-1$. Since $k$  divides $\lambda \gcd(v-1, d )$, it follows that $k$ divides $\lambda q^t$, where $t=1, 2, 3$. Note that $\lambda$ is an odd prime divisor of $k$. It follows from Lemma \ref{lem:six}(b) that $\lambda$ must divide $a$, $q^{2j}-1$, for some $j \in \{1, 2, \ldots, i\}$ or $q^{j}-1$, for some $j\in\{ 2,\ldots, n-2i\}$. Since $\max\{q^{n-2i}+1, q^{i}+1\}<q^{n-2}+1$, we conclude that
    \begin{align}\label{eq:lam-B-U}
    \lambda <q^{n-2}+1,
    \end{align}
    where $2i\leq n$. Then by Lemma \ref{lem:six}(b), we have that $\lambda q^{i(2n-3i)}<\lambda v<k^2\leq \lambda^2 q^{6}$, and so
    \begin{align}\label{eq:eq-B-U}
    q^{i(2n-3i)}<\lambda q^{6}.
    \end{align}
    It follows from \eqref{eq:lam-B-U}  that  $q^{i(2n-3i)}<q^{6}(q^{n-2}+1)$. Since $q^6(q^{n-2}+1)<q^{n+5}$, we have that $q^{i(2n-3i)}<q^{n+5}$, and so
    \begin{align}\label{eq:Un-par-n}
    n\cdot (2i-1)<3i^2+5.
    \end{align}
    As $n\geq 2i$, it follows that $i^2<2i+5$. This inequality holds only for $i=1, 2, 3$. If $i=1$, then $k$ divides $\lambda q^{2}$. Let $u$ be a positive integer such that $uk=\lambda q^2$. Since $\lambda<k$, we have that $u<q^{2}$. By \cite[Lemma 3.7(a)]{a:ABD-Exp}, $u$ is coprime to $k$, and so $u=1$ or $u=q^2$. In the later case, we would have $k=\lambda$, which is a contradiction. Therefore, $u=1$ and $k=\lambda q^2$. Note for $n\geq 4$ that $v-1=s(q)+q^2$, where $s(q)$ is a polynomial divisible by $q^{4}$. Since $k(k-1)=\lambda (v-1)$ and $k=\lambda q^2$, we have that $k =1+[s(q)+q^2]/q^2=[s(q)+2q^2]/q^2$. Therefore, $\lambda=[s(q)+2q^2]/q^4$. Since  $q^4$ divides $s(q)$, it follows that $q^4$ divides $2q^2$, which is impossible. If $i\in\{2,3\}$, by the same argument as in the case where $i=1$, we conclude that $n=5$ and $k=\lambda q^{3}$ if $i=2$, and $n=6$ and $k=\lambda q$ if $i=3$. Thus
    \begin{align*}
    \lambda=\left\{
    \begin{array}{ll}
    \dfrac{q^5+q^2+2}{q^3},& \hbox{if $i=2$;} \\
    \dfrac{q^8+q^7+q^5+q^4+q^3+q^2+2}{q}, & \hbox{if $i=3$.} \\
    \end{array}
    \right.
    \end{align*}
    Since $\lambda$ has to be integer, it follows that $q=2$ when $(n,i)=(6,3)$ in which case $(v,k,\lambda)=(891,446,223)$, but by \cite{a:Braic-2500-nopower}, we have no symmetric design with this parameters set. \smallskip
    
    \noindent \textbf{(2)} Let $H$ be the stabilizer of a nonsingular subspace, and let $q$ be odd. Here by~\cite[Proposition 4.1.4]{b:KL-90}, $H_{0}$ is isomorphic to
    \begin{align*}
    \,^{\hat{}}\SU_{i}(q)\times \SU_{n-i}(q)\cdot (q+1),
    \end{align*}
    where $2i<n$. Then by~\eqref{eq:v} and Lemma \ref{lem:up-lo-b}, we have that $v>q^{2i(n-i)-6}$. It follows from~\cite[p. 336]{a:Saxl2002} that $k$ divides $\lambda d_{ i}(q)$, where $d_{ i}(q)= (q^i-(-1)^i)(q^{n-i}-(-1)^{n-i})$. Then Lemma \ref{lem:six}(b) implies that $\lambda q^{2i(n-i)-2}<\lambda v<k^2\leq \lambda^2(q^i-(-1)^i)^2(q^{n-i}-(-1)^{n-i})^2$. Since $q^i-(-1)^i<2q^i$ and $q^{n-i}-(-1)^{n-i}<2q^{n-i}$, we have that
    \begin{align}\label{eq:eq-A-ii-U}
    q^{2i(n-i)-6}<16\lambda q^{2n}.
    \end{align}
    Since $\lambda$ is an odd prime divisor of $k$, it follows from Lemmas \ref{lem:New} and \ref{lem:six}(b) that $\lambda$ divides $a$, $p$, $q\pm 1$, or $(q^{j}-(-1)^{j})(q-(-1)^{j})^{-1}$ with $j \in \{3,\ldots, n-i\}$. Thus $\lambda \leq \lambda_{i}(q)$, where
    \begin{align}\label{eq:lam-A-ii-U}
    \lambda_{i}(q)=\left\{
    \begin{array}{ll}
    (q^{n-i}+1)(q+1)^{-1},& \text{if} \quad n-i \quad  \text{is odd}; \\
    (q^{n-i-1}+1)(q+1)^{-1}.& \text{if} \quad n-i \quad  \text{is even}.
    \end{array}
    \right.
    \end{align}
    Then by \eqref{eq:eq-A-ii-U} and \eqref{eq:lam-A-ii-U}, we have that $q^{2i(n-i)-6}\cdot (q+1)<32q^{3n-i}$, and so $n(2i-3)+i<2i^2+8$. Since $n>2i$, we conclude that $4i^2-5i<n(2i-3)+i<2i^2+8$, and so $2i^2<5i+8$. This inequality holds only for $i=1, 2, 3$. If $i=3$, then by \eqref{eq:v}, we have that
    \begin{align*}
    v=\frac{q^{3n-9}(q^{n}-(-1)^n)(q^{n-1}-(-1)^{n-1})(q^{n-2}-(-1)^{n-2})}{(q^3+1)(q^2-1)(q+1)}.
    \end{align*}
    Here $k$ divides $\lambda d_{3}(q)$, where $d_{3}(q)=(q^3+1)(q^{n-3}-(-1)^{n-3})$. Thus by Lemma \ref{lem:six}(b), we have that $\lambda v<k^2\leq \lambda^2(q^3+1)^2(q^{n-3}-(-1)^{n-3})^2$. Therefore, $q^{3n-9}(q^{n}-(-1)^n)(q^{n-1}-(-1)^{n-1})(q^{n-2}-(-1)^{n-2})<16\lambda\cdot q^{2n}(q^3+1)(q^2-1)(q+1)$, and hence since $q^{n-i}+1<2q^{n-i}$, it follows from \eqref{eq:lam-A-ii-U} that
    \begin{align*}
    (q^{n}-(-1)^n)(q^{n-1}-(-1)^{n-1})(q^{n-2}-(-1)^{n-2})<32q^{6}(q^3+1)(q^2-1).
    \end{align*}
    Note that $n>2i=6$. Thus $(q^7+1)(q^6-1)(q^5+1)\leq (q^{n}-(-1)^n)(q^{n-1}-(-1)^{n-1})(q^{n-2}-(-1)^{n-2})<32q^{6}(q^3+1)(q^2-1)$, and so $(q^7+1)(q^6-1)(q^5+1)<32q^{6}(q^3+1)(q^2-1)$, which is impossible. If $i=2$, then by \eqref{eq:v}, we have $v=q^{2n-4}(q^{n}-(-1)^{n})(q^{n-1}-(-1)^{n-1})(q^2-1)^{-1}(q+1)^{-1}$. Since $k$ divides $\lambda d_{2}(q)=\lambda (q^2-1)(q^{n-2}-(-1)^{n-2})$, by \eqref{eq:lam-A-ii-U} and Lemma \ref{lem:six}(b), we conclude that $q^{2n-4}(q^{n}-(-1)^{n})(q^{n-1}-(-1)^{n-1})<(q^2-1)^3(q^{n-2}+1)^3$. Since also  $(q^2-1)(q^{n-2}+1)<q^{n-3}(q^{3}-1)$, we have that $q^{2n-4}(q^{n}-(-1)^{n})(q^{n-1}-(-1)^{n-1})<q^{3n-9}(q^{3}-1)^3$. Therefore, $(q^{n}-(-1)^{n})(q^{n-1}-(-1)^{n-1})<q^{n-5}(q^{3}-1)^3$. Note that $q^{n}(q^{n-1}-1)\leq (q^{n}-(-1)^{n})(q^{n-1}-(-1)^{n-1})$. Then $q^{n}(q^{n-1}-1)<q^{n-5}(q^{3}-1)^2$, and so $q^{5}(q^{n-1}-1)<(q^3-1)^3$. Since $n>2i=4$, we conclude that $q^5(q^4-1)\leq q^{5}(q^{n-1}-1)<(q^{3}-1)^2$, and so $q^5(q^4-1)<(q^3-1)^3$, which is impossible. Hence $i=1$, in which case by \eqref{eq:v}, we have that $v=q^{n-1}(q^{n}-(-1)^n)(q+1)^{-1}$. If $n$ is even, then as $q$ is odd, $v$ is even, which is a contradiction. Therefore, $n$ is odd, and hence $v=q^{n-1}(q^{n}+1)(q+1)^{-1}$. Recall that $k$ divides $\lambda d_{1}(q)$, where $d_{1}(q)=(q+1)(q^{n-1}-1)$.  Since $\gcd(v-1, (q+1)(q^{n-1}-1) )=(q^{n-1}-1)(q+1)^{-1}$, it follows that $k$ divides $\lambda f(q)$, where $f(q)=(q^{n-1}-1)(q+1)^{-1}$. Let $u$ be a positive integer such that $uk=\lambda f(q)$. Then by Lemma \ref{lem:six}(a), we have that
    \begin{align}\label{eq:k-lam-A-ii-U}
    k=u\cdot (q^{n}+q+1)+1 \quad \text{and} \quad \lambda=u^2q(q+1)+\frac{u^2(2q+1)+u}{f(q)}.
    \end{align}
    Recall that $uk=\lambda f(q)$, where $f(q)=(q^{n-1}-1)(q+1)^{-1}$. Then by \eqref{eq:lam-A-ii-U} and \eqref{eq:k-lam-A-ii-U}, we have that $u^2(q^n+q+1)+u<(q^{n-2}+1)(q^{n-1}-1)(q+1)^{-2}$. Therefore, $u^2q^n(q+1)^2<(q^{n-2}+1)(q^{n-1}-1)<q^n(q^{n-3}+1)$. Note that $(q^{n-3}+1)(q+1)^{-2}<q^{n-5}$. Thus
    \begin{align}\label{eq:A-ii-1-u-U}
    u^2<q^{n-5}.
    \end{align}
    Since $\lambda$ is integer, by \eqref{eq:k-lam-A-ii-U}, we conclude that $f(q)$  divides $u^2(2q+1)+u$. Thus $q^{n}-1\leq [u^2(2q+1)+u](q+1)$. As $[u^2(2q+1)+u](q+1)<6u^2q^2$, it follows that $q^{n}-1<6u^2q^2$, and so by \eqref{eq:A-ii-1-u-U}, we conclude that $q^{n}-1<6q^{n-3}$. Therefore, $q^{2}\leq 6$, which is impossible as $q$ is odd.\smallskip
    
    \noindent \textbf{(3)} Let $H_{0}$ be the stabilizer of an orthogonal decomposition $V=\oplus V_{j}$ with all $V_{j}$'s isometric, and let $q$ be odd. In this case, by~\cite[Proposition 4.2.9]{b:KL-90}, $H_{0}$ is isomorphic to
    \begin{align*}
    \,^{\hat{}}\SU_{i}(q)^{t}\cdot{(q+1)}^{t-1}\cdot\S_{t}
    \end{align*}
    It follows from \cite[Proposition 3.5]{a:ABCD-PrimeRep} that $v>q^{i^2t(t-1)/2}/(t!)$. Suppose first that $i \geq 2$. Since $\lambda$ is an odd prime divisor of $k$, the parameter $\lambda$ must divide $2a\cdot (t!)\cdot  q^{it(i-1)/2)}(q^{i}-(-1)^i)^t{\cdots}(q^{2}-1)^t(q+1)^{t-1}$. Since $\max\{p, t, (q^{i}-(-1)^i)/2\}<t\cdot (q^{i}-(-1)^i)/2$, it follows that
    \begin{align}\label{eq:lam-A-iii-2-U}
    \lambda <t\cdot (q^{i}-(-1)^i)/2.
    \end{align}
    By \cite[p.336]{a:Saxl2002}, the parameter $k$ divides $\lambda t(t-1)(q^i-(-1)^i)^2$. Then by Lemma \ref{lem:six}(b), we have that $\lambda q^{i^2t(t-1)/2}/(t!)<\lambda v<k^2\leq t^2(t-1)^2\lambda^2(q^i-(-1)^i)^4$, and so
    \begin{align}\label{eq:i-t-2}
    q^{i^2t(t-1)/2}<\lambda\cdot (t!)\cdot  t^2(t-1)^2(q^i-(-1)^i)^4.
    \end{align}
    Then  by \eqref{eq:lam-A-iii-2-U}, we have that $2q^{i^2t(t-1)/2}<(t!)\cdot t^3(t-1)^2(q^i-(-1)^i)^5$. Since $q^i-(-1)^i<2q^i$, it follows that $q^{i^2(t^2-t)/2}<16(t!)\cdot t^3(t-1)^2q^{5i}$. If $t\geq 4$, then by Lemma \ref{lem:factor-bound}(b), we have that $t!<2^{4t(t-3)/3}$, and so $q^{i^2(t^2-t)/2}<2^{[4t(t-3)+12]/3}\cdot t^5q^{5i}$. Note that $t^5\leq 2^{3t}$. Thus, $q^{i^2(t^2-t)/2}<2^{[4t^2-3t+12]/3}\cdot q^{5i}$, and so $q^{3i^2(t^2-t)-30i}<2^{8t^2-6t+24}$. Therefore,
    \begin{align}\label{eq:i-t-2-1-1-1}
    t^2(3i^2-8)<3i^2t+30i-6t+24.
    \end{align}
    Since $t\geq 4$, we have that $t^2(3i^2-8)<3i^2t+30i-6t+24<(3i^2+30i)t$, and so $t(3i^2-8)<3i^2+30i$. Thus $12i^2-32\leq t(3i^2-8)<3i^2+30i$, and so $9i^2-30i<32$. Then $i=2, 3, 4$.
    
    Suppose that $i=2$. Then by \eqref{eq:i-t-2-1-1-1}, we conclude that $4t^2<6t+84$ implying that  $t=4,5$. If $(i, t)=(2, 4)$, then by \eqref{eq:i-t-2} and \eqref{eq:lam-A-iii-2-U}, we have that $q^{24}<4^4\cdot 3^3(q^2-1)^5$, which is impossible. If $(i, t)=(2, 5)$, then by \eqref{eq:i-t-2} and \eqref{eq:lam-A-iii-2-U}, we conclude that $q^{40}<5^4\cdot 4^3\cdot 3(q^2-1)^5$, which is impossible. The case where $i=3, 4$, can be ruled out by the same manner as above.\smallskip
    
    Suppose now that $i=1$. Then $H_{0}$ is isomorphic to $\,^{\hat{}}(q+1)^{n-1}\cdot \Sym_{n}$, and so by~\eqref{eq:v}, we have that
    \begin{align}\label{eq:2-v}
    v=\frac{q^{n(n-1)/2}(q^{n}-(-1)^{n})\cdots(q^2-1)}{(q+1)^{n-1} \cdot n!}.
    \end{align}
    Note that $\lambda$ is an odd prime divisor of $k$. Then $\lambda$ divides $2a(n!)(q+1)^{n-1}$. Therefore, $\lambda$ must divide $a$, $n!$ or $q+1$, and so $\lambda \leq \max\{a, n, (q+1)/2\}$. In conclusion, $\lambda <n(q+1)/2$. We now consider the following subcases:\smallskip
    
    \noindent{\textbf{(3.1)}} Let $q\geq 5$. Here by \cite[p.337]{a:Saxl2002}, we have that $k$ divides $\lambda n(n-1)(q+1)^2/2$. Then Lemma~\ref{lem:six}(b) implies that $
    4q^{n(n-1)/2}(q^{n}-(-1)^{n})\cdots(q^2-1)<\lambda (n!)\cdot n^2(n-1)^2(q+1)^{n+3}$. Recall that $\lambda<n(q+1)/2$. Therefore,
    \begin{align}\label{eq:lam-A-ii-U-3}
    8q^{n(n-1)/2}(q^{n}-(-1)^{n})\cdots(q^2-1)<(n!)\cdot n^3(n-1)^2(q+1)^{n+4}.
    \end{align}
    Note that $q+1<2q$. Then \eqref{eq:lam-A-ii-U-3} and Lemma \ref{lem:equation} imply that $q^{n^2-2n-4}<2^{n+1}(n!)\cdot n^3(n-1)^2<2^{n+1}(n!)\cdot n^5$. As $n\geq 5$, we conclude that $n^5\leq 2^{3n}$. Then by \cite[Lemma 4.4]{a:AB-Large-15}, we have that $q^{3n^2-6n-12}<2^{4n^2+3}$. Since $q\geq 5$, it follows that $2^{6n^2-12n-24}<q^{3n^2-6n-12}<2^{4n^2+3}$. Thus $2n^2<12n+27$,  and so $n=5, 6, 7$. Let now $h_{n}(q)=8q^{(n^{2}-3n-8)/2}(q^{n}-(-1)^{n})\cdots(q^2-1)$. Then since $q+1<2q$, we conclude by \eqref{eq:lam-A-ii-U-3} that $h_{n}(5)\leq h_{n}(q)<2^{n+4}\cdot n^3(n-1)^2(n!)$, for $n\in\{5,6,7\}$. Define
    \begin{align*}
    u_{n}=\left\{
    \begin{array}{ll}
    2^{16}\cdot 3\cdot5^4, & \hbox{if $n=5$;} \\
    2^{17}\cdot 3^{5}\cdot 5^{3}, & \hbox{if $n=6$;} \\
    2^{17}\cdot 3^{4}\cdot 5\cdot 7^{4}, & \hbox{if $n=7$.}
    \end{array}
    \right.
    \end{align*}
    Then
    $h_{n}(q)< u_{n}$ for $n\in\{5,6,7\}$, and hence $h_{n}(5)< u_{n}$, which is impossible.\smallskip
    
    \noindent{\textbf{(3.2)}} Let $q=3$. Here by \cite[p.337]{a:Saxl2002}, we have that $k$ divides $\lambda n(n-1)(n-2)(q+1)^3/6$. Then Lemma~\ref{lem:six}(b) implies that $
    6q^{n(n-1)/2}(q^{n}-(-1)^{n})\cdots(q^2-1)<\lambda (n!)\cdot n^2(n-1)^2(n-2)^2(q+1)^{n+5}$. Recall that $\lambda<n$. Therefore,
    \begin{align}\label{eq:lam-A-ii-U-3-1}
    3q^{n(n-1)/2}(q^{n}-(-1)^{n})\cdots(q^2-1)<2^{2n+9}n^3(n-1)^2(n-2)^2\cdot (n!).
    \end{align}
    Since $q^{n(n-1)/2}\leq (q^{n}-(-1)^{n})\cdots(q^2-1)$, replacing $q$ by $3$, we have that $3^{n^2-n}<2^{2n+9}(n!)\cdot n^3(n-1)^2(n-2)^2<2^{2n+9}(n!)\cdot n^7$. As $n\geq 5$, we conclude that $n^7\leq 2^{4n}$, and so \cite[Lemma 4.4]{a:AB-Large-15} implies that $3^{3n^2-3n}<2^{4n^2+6n+27}$. Therefore, $3n^2-3n<(4n^2+6n+27)\cdot \log_{3}2<(4n^2+6n+27)\times 0.7$, and so $2n^2<72n+189$, and hence $n\in\{5, \ldots, 38\}$. Let
    $h_{n}(q)=3q^{(n^{2}-n)/2}(q^{n}-(-1)^{n})\cdots(q^2-1)$. Then $h_{n}(3)<u_{n}$, where $u_{n}=2^{2n+9}n^3(n-1)^2(n-2)^2\cdot (n!)$. However, it is easy to check that this inequality does not hold for $n\in\{5, \ldots, 38\}$.\smallskip
    
    \noindent \textbf{(4)} Let $H = N_{G}(X(q_{0}))$ with $q=q_{0}^t$ odd and $t$ odd prime. By~\cite[Proposition 4.5.3]{b:KL-90}, the subgroup $H_{0}$ is isomorphic to
    \begin{align*}
    \,^{\hat{}}\SU_{n}(q_{0})\cdot \gcd((q+1)/(q_{0}+1), n).
    \end{align*}
    Since $|\Out(X)|=2a\cdot \gcd(n, q+1)$, by Lemma~\ref{lem:up-lo-b} and the inequality $|X|<|\Out(X)|^2\cdot |H_{0}|^3$, we have that $q_{0}^{t(n^2-2)}<8a^2\cdot  q_{0}^{3n^2}(1+q_{0}^{-1})^3(1+q_{0}^{-3})^3(q_{0}^t+1)^3$. As $a^2<2q$, $q_{0}^t+1<2q_{0}^t$ and $(1+q_{0}^{-1})^3(1+q_{0}^{-3})^3<2$, we have that $q_{0}^{n^2(t-3)-6t}<256$. Note that $q_{0}$ is odd. So $3^{n^2(t-3)-6t}<256$. If $t\geq 5$, then $3^{2n^2-30}<256<3^6$, and so $2n^2-30<6$, which  contradicts the fact that $n\geq 5$. Therefore, $t=3$. In this case, by \eqref{eq:v} and Lemma \ref{lem:up-lo-b}, we have that $v>q_{0}^{2n^2-10}$. By Lemmas \ref{lem:New} and \ref{lem:six}(b), the parameter $k$ divides $2a\cdot q_{0}^{n(n-1)/2}(q_{0}^{n}-(-1)^n){\cdots}(q_{0}^{2}-1)\cdot \gcd(q_{0}^2-q_{0}+1, n)$. It follows from Lemma \ref{lem:six}(a) and (c) that $k$ divides $\lambda \gcd(v-1,|\Out(X)|\cdot |H_{0}|)$.  Since by Lemma~\ref{lem:Tits},  $v-1$ is coprime to $q_{0}$, we conclude that
    \begin{align}\label{eq:k-A-i-U}
    k \text{ divides }  2a\lambda\cdot |H_{0}|_{p'}.
    \end{align}
    Then by \eqref{eq:k-A-i-U} and Lemma \ref{lem:six}(b), we have that $\lambda q_{0}^{2n^2-10}< \lambda v <k^{2}\leq 4a^2\lambda^2 (q_{0}^{n}-(-1)^n)^2{\cdots}(q_{0}^{2}-1)^2\cdot (q_{0}^2-q_{0}+1)^2$. Since $(q_{0}^2-q_{0}+1)^2<q_{0}^4$, we conclude that $q_{0}^{n^2-n-12}<4a^2\lambda$. Since also  $\lambda$ is an odd prime divisor of $|H|$, it must divide $a$, $p$, $q\pm 1$ or $(q_{0}^{j}-(-1)^{j})/(q_{0}-(-1)^{j})$, for some $j\in \{2, 3, \ldots, n\}$. Then $\lambda \leq (q_{0}^{n}-1)/(q_{0}-1)$,  and so the inequality  $q_{0}^{n^2-n-12}<4a^2\lambda$ implies that $q_{0}^{n^2-2n-12}(q_{0}-1)<4a^2$. As $a=3s$ and  $n\geq 5$, it follows that $q_{0}^3(q_{0}-1)\leq q_{0}^{n^2-2n-12}(q_{0}-1)<36s^2$. Therefore, $q_{0}^3(q_{0}-1)<36s^2$, which is impossible.
\end{proof}

\begin{proposition}\label{prop:psp}
    Let $\Dmc$ be a nontrivial symmetric $(v,k,\lambda)$ design with $\lambda$ prime. Suppose that $G$ is an automorphism group of $\Dmc$ of almost simple type with socle $X$. If $G$ is flag-transitive, then the socle $X$ cannot $\PSp_{2m}(q)$ with $(m, q)\neq (2,2), (2,3)$.
\end{proposition}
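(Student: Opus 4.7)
The plan is to mirror the case analysis established in the proofs of Propositions~\ref{prop:psl} and~\ref{prop:unitary}. By Lemma~\ref{lem:six}(a) the degree $v=|G{:}H|$ is odd, and hence by Lemma~\ref{lem:odd-deg}, the point-stabiliser $H_{0}=H\cap X$ falls into one of the following types: (i) $q$ is even and $H_{0}$ is a maximal parabolic (stabiliser of a totally isotropic subspace of dimension $i\leq m$); (ii) $q$ is odd and $H$ stabilises a nondegenerate subspace, so $H_{0}$ is of the form $\,^{\hat{}}\Sp_{2i}(q)\times \Sp_{2m-2i}(q)$ with $1\leq i<m$; (iii) $q$ is odd and $H_{0}$ stabilises an orthogonal decomposition $V=\perp_{j=1}^{t}V_{j}$ with all $V_{j}$ isometric of symplectic dimension $2m/t$, so $H_{0}\cong \,^{\hat{}}\Sp_{2m/t}(q)\wr \Sym_{t}$ (including the $\GL_{m}(q).2$ type with $i=m/2$-dimensional Lagrangian decomposition when $t=2$); (iv) $H=N_{G}(X(q_{0}))$ is a subfield subgroup with $q=q_{0}^{t}$ odd and $t$ an odd prime.

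In each case I would extract the precise structure of $H_{0}$ from \cite[Propositions~4.1.3, 4.1.19, 4.2.10, 4.5.4]{b:KL-90}, use \eqref{eq:v} together with Lemma~\ref{lem:up-lo-b} to obtain a polynomial lower bound $v>q^{N(m,i,t)}$, and then use Lemma~\ref{lem:six}(b)--(c) to force $k\mid \lambda f(q)$ for some polynomial $f(q)$ of the smallest possible degree coming either from the subdegrees tabulated in \cite{a:Saxl2002} or from $\gcd(v-1,|H_{0}|)$. Combined with the bound $\lambda\leq \lambda_{0}(q)$ obtained by observing that $\lambda$ is an odd prime divisor of $|H_{0}|$ (hence divides one of the factors $a$, $p$, $q\pm1$, $(q^{j}\pm1)/(q\pm1)$, or $t!$, depending on the case), the inequality $\lambda v<k^{2}\leq \lambda^{2}f(q)^{2}$ will reduce to a polynomial inequality in $q$ and $m$ whose finitely many solutions can then be eliminated by a direct calculation checking primality of $k(k-1)/(v-1)$. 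For case~(i), Lemma~\ref{lem:subdeg} supplies a unique $p$-power subdegree; combined with the fact that $(v-1)_{2}$ is a small power of $q$ (depending on whether $i=m$ or $i<m$), this forces $k\mid \lambda q^{s}$ for some small $s$, and the analysis then proceeds as in Case~(1) of Proposition~\ref{prop:psl}. For case~(iv) the quick route is via Corollary~\ref{cor:large}: the inequality $|X|<|\Out(X)|^{2}{\cdot}|H_{0}|^{3}$ together with $|\Out(X)|=2a\gcd(2,q-1)$ forces $t=3$, and then Tits' Lemma~\ref{lem:Tits} gives $\gcd(v-1,p)=1$, so $k$ divides $2a\lambda$ times the $p'$-part of $|H_{0}|$, and the usual estimate yields a contradiction.

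The main obstacle will be case~(iii), the imprimitive $\mathcal{C}_{2}$-action. When $i\geq 2$ the exponent $N(m,i,t)=2i^{2}t(t-1)$ in the lower bound on $v$ grows faster than the contribution from $(q^{2i}\pm1)^{t}$ in $|H_{0}|$, so for $t\geq 4$ one can use Lemma~\ref{lem:factor-bound} to swallow the $t!$ factor and immediately derive $\lambda v\not<k^{2}$, exactly as in Case~(3) of Proposition~\ref{prop:unitary}; the remaining subcases $t=2,3$ will need a careful direct examination of $\gcd(v-1,|H_{0}|)$. The subcase $i=1$ (so $H_{0}$ involves $\Sp_{2}(q)^{m}\cdot \Sym_{m}$) is the most delicate: one must split according to $q\geq 5$ and $q=3$ and use Lemma~\ref{lem:factor-bound} with the sharpened bound $\lambda<m(q+1)/2$ to reduce to a finite list $m\in\{2,\ldots,m_{0}\}$, paralleling subcases~(3.1)--(3.2) of Proposition~\ref{prop:unitary}. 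A similar small-$m$, small-$q$ residue list may also survive in case~(ii) with $i=1$, where $H_{0}\cong \Sp_{2}(q)\times \Sp_{2m-2}(q)$ and the arithmetic of $v-1$ needs to be pinned down explicitly via an equation $uk=\lambda f(q)$ followed by divisibility manipulations exactly as in the end of Case~(3) of Proposition~\ref{prop:psl}.
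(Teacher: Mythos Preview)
Your overall strategy matches the paper's proof: the same four cases from Lemma~\ref{lem:odd-deg}, the same use of Lemma~\ref{lem:subdeg} and $(v-1)_{2}=q$ in case~(i), the same $uk=\lambda d_{1}(q)$ manipulation in case~(ii), and the same Corollary~\ref{cor:large} route to force $t=3$ in case~(iv). Two corrections and one warning, however.

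First, drop the parenthetical about $\GL_{m}(q).2$ in case~(iii). In Lemma~\ref{lem:odd-deg}(a)(i)(2) the phrase ``all $V_{j}$'s isometric'' forces the $V_{j}$ to be nondegenerate symplectic subspaces, so the decomposition into a pair of totally isotropic $m$-spaces does not arise here. Second, the split ``$q\geq 5$ versus $q=3$'' you propose for case~(iii) with $i=1$ is imported from the unitary argument (where it reflects a genuine change in Saxl's subdegree bound) and is not the organising principle here; the symplectic bound $k\mid \lambda t(t-1)(q^{2}-1)(q+1)/2$ is uniform in $q$, and the paper instead splits on $t$.

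The warning is that you underestimate where the real work lies. For $t\geq 4$ the factorial bound does finish case~(iii), and $t=3$ dies quickly, but at $t=2$, $i=1$ (so $m=2$, $X=\PSp_{4}(q)$, $v=q^{2}(q^{2}+1)/2$) the argument is substantial. One has $k\mid c_{1}\lambda(q^{2}-1)$ with $c_{1}=\gcd(3,q+1)$, and writing $uk=c_{1}\lambda(q^{2}-1)$ gives explicit formulae $2c_{1}k=u(q^{2}+2)+2c_{1}$ and $2c_{1}^{2}\lambda=u^{2}+(3u^{2}+2c_{1}u)/(q^{2}-1)$. The paper must then split further according to whether $\lambda$ divides $a$, $p$, $(q-1)/2$, or $(q+1)/2$, each subcase requiring its own divisibility argument (the last via an auxiliary polynomial identity bounding $u$ against $q$). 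This subcase alone occupies roughly as much space as the rest of the proof combined. The technique is precisely the ``$uk=\lambda f(q)$ followed by divisibility manipulations'' you correctly flagged for case~(ii) with $i=1$, so plan to deploy it here as well rather than expecting a routine finite check.
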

\begin{proof}
    Let $H_{0}=H\cap X$, where $H=G_{\alpha}$ with $\alpha$ a point of $\Dmc$.  Then by Lemma \ref{lem:six}(a), $v$ is odd, and so by Lemma \ref{lem:odd-deg} one of the following holds:
    \begin{enumerate}[\rm (1)]
        \item $q$ is even, and $H_{0}$ is a parabolic subgroup of $X$;
        \item $q$ is odd, and $H$ is the stabilizer of a nonsingular subspace;
        \item $q$ is odd, and $H_{0}$ is the stabilizer of an orthogonal decomposition $V=\oplus V_{j}$ with all $V_{j}$'s isometric;
        \item $q=q_{0}^t$ is odd with $t$ odd prime, and $H = N_{G}(X(q_{0}))$.
    \end{enumerate}
    We now analyse each of these possible cases separately. \smallskip
    
    \noindent \textbf{(1)} Let $H_{0}$ be a parabolic subgroup of $X$, and let $q=2^{a}$ be even. Then~\cite[Proposition 4.1.19]{b:KL-90} implies that $H_{0}$ is isomorphic to
    \begin{align*}
    [q^{h}]\cdot (\GL_{i}(q)\times \PSp_{2m-2i}(q)),
    \end{align*}
    where $h=2mi+(i-3i^2)/2$ and $i\leq m$. It follows from~\eqref{eq:v} and Lemma \ref{lem:up-lo-b} that  $v>q^{i(4m-3i)}$. By Lemma~\ref{lem:subdeg}, there is a unique subdegree $d=2^{c}$. The $2$-power $(v-1)_{2}$ is $q$. Since $k$ divides $\lambda \gcd(v-1, d )$, it follows that $k$ divides $\lambda q$. By the fact that $\lambda$ is an odd prime divisor of $k$, Lemma \ref{lem:six}(b) implies that $\lambda$ must divide $a$, $q^{j}-1$ with $j\in \{1,\ldots, i\}$ or $q^{2j}-1$ with $j\in\{1,\ldots, m-i\}$. Thus
    \begin{align}\label{eq:lam-B-S}
    \lambda \leq (q^{m}-1)/(q-1).
    \end{align}
    It follows from Lemma \ref{lem:six}(b) that $\lambda q^{i(4m-3i)}<\lambda v<k^2\leq \lambda^2 q^2$, and so $q^{i(4m-3i)}<\lambda q^2$. Then by \eqref{eq:lam-B-S}, we have that
    \begin{align}\label{eq:eq-B-S}
    q^{i(4m-3i)}(q-1)<q^2(q^m-1).
    \end{align}
    Therefore, $i(4m-3i)<m+2$, and so $m(4i-1)<3i^2+2$. Since $i\leq m$, it follows that $i(4i-1)\leq m(4i-1)<3i^2+2$. Thus $i^2<i+2$, and hence $i=1$. By \eqref{eq:eq-B-S}, we have that $q^{4m-3}(q-1)<q^2(q^m-1)$, and so $q^{4m-3}<q^{m+2}$, which is impossible.\smallskip

    \noindent \textbf{(2)} Let $H$ be the stabilizer of a nonsingular subspace, and let $q$ be odd.  Here by~\cite[Proposition 4.1.3]{b:KL-90}, the subgroup $H_{0}$ is isomorphic to
    \begin{align*}
    \PSp_{2i}(q)\times \PSp_{2m-2i}(q) \cdot 2,
    \end{align*}
    where $2i<m$. In this case, $v>q^{4i(m-i)}$, and so Lemma~\ref{lem:six}(c) implies that $k$ divides $\lambda d_{ i}(q)$, where $d_{ i}(q)=(q^{2i}-1)(q^{2m-2i})(q^2-1)^{-2}$. Again by Lemma~\ref{lem:six}(b), we conclude that $\lambda q^{4i(m-i)}< \lambda v < k^{2}\leq \lambda^2 (q^{2i}-1)^2(q^{2m-2i}-1)^2(q^2-1)^{-4}$. Therefore,
    \begin{align}\label{eq:A-ii-1-S}
    q^{4i(m-i)}(q^2-1)^4<\lambda(q^{2i}-1)^2(q^{2m-2i}-1)^2
    \end{align}
    Since $\lambda$ is an odd prime divisor of $k$, Lemmas \ref{lem:New} and \ref{lem:six}(b) imply that $\lambda$ must divide $a$, $p$ or $q^{2j}-1$, for some $j \in \{1, \ldots, m-i\}$, and so
    \begin{align}\label{eq:A-ii-1-lam-S}
    \lambda \leq q^{m-i}+1.
    \end{align}
    Then by \eqref{eq:A-ii-1-S}, we have that $q^{4i(m-i)+6}<q^{5m-i}$. Thus $4i(m-i)+6<5m-i$, and so $m(4i-5)<4i^2-i-6$. As $m>2i$, the last inequality holds only for $i=1$, in which case by \eqref{eq:v}, we have that $v=q^{2m-2}(q^{2m}-1)(q^2-1)^{-1}$ and
    \begin{align}\label{eq:k-A-ii-1-S}
    k \text{ divides }  \lambda d_{1}(q),
    \end{align}
    where $d_{ 1}(q)=(q^{2m-2}-1)(q^2-1)^{-1}$. Let $u$ be a positive integer such that $uk=\lambda d_{1}(q)$. Since $v-1=(q^{2m-2}-1)(q^{2m}+q^2-1)(q^2-1)^{-1}$, by Lemma \ref{lem:six}(a), we have that
    \begin{align}\label{eq:k-lam-A-ii-S}
    k=u\cdot (q^{2m}+q^2-1)+1 \quad \text{and} \quad \lambda=u^2q^2(q^2-1)+\frac{u^2(2q^2-1)+u}{d_{1}(q)}.
    \end{align}
    It follows from  \eqref{eq:k-lam-A-ii-S} and \eqref{eq:k-A-ii-1-S} that $u\cdot (q^{2m}+q^2-1)+1\leq (q^{m-1}+1)(q^{2m-2}-1)(q^2-1)^{-1}$, and so $uq^{2m}(q^2-1)<(q^{m-1}+1)(q^{2m-2}-1)$. Since $(q^{m-1}+1)(q^{2m-2}-1)<q^{2m-2}(q^{m+1}+1)$, we have that $u\leq 2 q^{m-4}$. Since $\lambda$ is a positive integer,  we conclude by \eqref{eq:k-lam-A-ii-S} that $d_{1}(q)$ must divide $u^2(2q^2-1)+u$. Since also $u\leq 2 q^{m-4}$, we have that  $q^{2m-2}-1\leq (u^2(2q^2-1)+u)(q^{2}-1)\leq 2u^2q^2(q^2-1)< 8q^{2(m-4)+2}(q^2-1)$, and so $q^{2m-2}-1< 8q^{2m-6}(q^2-1)$. Thus $q^{2m-2}-1< 8q^{2m-4}$, and hence $q^{2}\leq 8$, which is impossible as $q$ is odd.\smallskip
    
    \noindent \textbf{(3)} Let $H_{0}$ be the stabilizer of an orthogonal decomposition $V=\oplus V_{j}$ with all $V_{j}$'s isometric, and let $q$ be odd. In this case, by~\cite[Proposition 4.2.10]{b:KL-90}, the subgroup $H_{0}$ is isomorphic to $\,^{\hat{}}\Sp_{2i}(q)\wr\Sym_{t}$ with $it=m$. Here by \cite[p.16]{a:ABCD-PrimeRep}, we have that $v>q^{2i^2t(t-1)}/(t!)$. The parameter $\lambda$ divides $k$. Then  it divides $2a\cdot (t!)\cdot q^{i^2t}(q^{2i}-1)^t{\cdots}(q^2-1)^t$ by Lemmas \ref{lem:New} and \ref{lem:six}(b). Therefore, $\lambda$ divides $a$, $p$, $t!$ or $q^{2j}-1$, for some $j\in \{1,\ldots,i\}$, and since $\lambda \leq \max \{a, p, t, (q^{i}+1)/2\}$, it follows that
    \begin{align}\label{eq:lam-A-iii-2-S}
    \lambda < t \cdot (q^{i}+1)/2.
    \end{align}
    Note also by \cite[p.328]{a:Saxl2002} that
    \begin{align}\label{eq:k-A-iii-2-SP}
    k \quad \text{divides} \quad \lambda t(t-1)(q^{2i}-1)^2(q-1)^{-1}/2.
    \end{align}
    Then Lemma \ref{lem:six}(b) implies that $\lambda v<k^2\leq \lambda^2 t^2(t-1)^2(q^{2i}-1)^4(q-1)^{-2}/4$, and so
    \begin{align}\label{eq:A-ii-2-2-eq-S}
    4q^{2i^2t(t-1)}(q-1)^2<\lambda \cdot t^2(t-1)^2(t!)(q^{2i}-1)^4.
    \end{align}
    It follows from \eqref{eq:lam-A-iii-2-S} and \eqref{eq:A-ii-2-2-eq-S} that
    \begin{align}\label{eq:A-ii-eq-S}
    8q^{2i^2t(t-1)}(q-1)^2<t^3(t-1)^2(t!)(q^{2i}-1)^4(q^{i}+1).
    \end{align}
    We now consider the following subcases:\smallskip
    
    \noindent\textbf{(3.1)} Assume first that $t\geq 4$.
    Note by Lemma \ref{lem:factor-bound}(b) that $t!<2^{4t(t-3)/3}$. Then $4^{3}q^{6i^2t(t-1)}{\cdot}(q-1)^6<2^{4t(t-3)}{\cdot}t^{15}(q^{2i}-1)^{12}q^{3i}$, and so $q^{6i^2t^2-6i^2t-27i+3}<2^{4t^2-3t-6}$. Thus $2t^2(3i^2-2)<3t(2i^2-1)+27i-9<3ti(2i+9)$. Therefore,
    \begin{align}\label{eq:A-iii-2-Sp}
    2t(3i^2-2)<3i(2i+9).
    \end{align}
    As $t\geq 4$, it follows that $8(3i^2-2)\leq 2t(3i^2-2)<3i(2i+9)$, and so $8(3i^2-2)<3i(2i+9)$. Then $i=1, 2$. If $i=2$, then by \eqref{eq:A-iii-2-Sp}, we conclude that $20t=2t(3i^2-2)<3i(2i+9)=78$, and so $t<4$, which is a contradiction. Therefore, $i=1$. By \eqref{eq:A-ii-eq-S}, we have that  $8q^{2t(t-1)}(q-1)^2<t^3(t-1)^2(t!)(q^{2}-1)^4(q+1)$. As $t\geq 4$, by Lemma \ref{lem:factor-bound}(b), we conclude that $4q^{2t(t-1)-8}<2^{4t(t-3)/3}{\cdot}t^5$. Since $t^5<2^{3t}$, we have that $q^{6t(t-1)-24}<2^{4t^2-3t-6}$. Thus  $6t(t-1)-24\leq [6t(t-1)-24]{\cdot}\log_{p}q< (4t^2-3t-6)\cdot \log_{p}2<(4t^2-3t-6)\times 0.7$. Therefore, $32t^2-39t<198$, which is impossible for any $t\geq 4$.\smallskip
    
    \noindent\textbf{(3.2)} Assume now that $t=3$. It follows from \eqref{eq:A-ii-eq-S} that $q^{12i^{2}}{\cdot}(q-1)^2<3^4{\cdot}(q^{2i}-1)^4(q^{i}+1)$. Since $q^{2i}-1<q^{2i}$ and $q^{i}+1<2q^{i}$, we  conclude that $q^{12i^{2}}{\cdot}(q-1)^2<3^4{\cdot}(q^{2i}-1)^4(q^{i}+1)<2\cdot 3^{4}q^{9i}$, and so $q^{12i^{2}}<q^{9i+4}$. Thus $i=1$.
    Again, we apply \eqref{eq:A-ii-eq-S} and conclude that $q^{12}{\cdot}(q-1)^2<3^4{\cdot}(q^{2}-1)^4(q+1)$, which is impossible.
    
    \noindent\textbf{(3.3)} Assume finally that $t=2$. The inequality \eqref{eq:A-ii-eq-S} implies that $q^{4i^2}{\cdot}(q-1)^2<2(q^{2i}-1)^4(q^i+1)$. Since $q^i+1<2q^{i}$, we have that $q^{4i^2}{\cdot}(q-1)^2<2q^{9i}$. This inequality holds only for $i\in \{1,2\}$. If $i=2$, then $m=4$, and so by \eqref{eq:v}, we have that $v=q^8(q^4+q^2+1)(q^4+1)/2$. By \eqref{eq:k-A-iii-2-SP} and Lemma \ref{lem:six}(a), the parameter $k$ must divide $\lambda (q^{2}+1)^2$. Then by Lemma \ref{lem:six}(b), we conclude that $\lambda q^8(q^4+q^2+1)(q^4+1)/2<k^2\leq \lambda^2 (q^{2}+1)^4$, and so $q^8(q^4+q^2+1)(q^4+1)< 2\lambda (q^{2}+1)^4$. Then \eqref{eq:lam-A-iii-2-S} implies that $q^8(q^4+q^2+1)(q^4+1)< (q^{2}+1)^5$, which is impossible. Therefore, $i=1$, and hence \eqref{eq:v} implies that
    \begin{align}\label{eq:v-A-ii-2-2-2-SP}
    v=\frac{q^2(q^2+1)}{2}.
    \end{align}
    Since $\gcd(v-1,q+1)$ divides $\gcd(3,q+1)$, it follows from   \eqref{eq:k-A-iii-2-SP} and Lemma~\ref{lem:six}(a) that
    \begin{align*}
    k \text{ divides } c_{1}\lambda f(q),
    \end{align*}
    $f(q)=q^2-1$ and $c_{1}=\gcd(3, q+1)$. Let now $u$ be a positive integer such that $uk=c_{1}\lambda f(q)$. Then Lemma \ref{lem:six} implies that
    \begin{align}\label{eq:k-lam-A-ii-2-2-2-SP}
    2c_{1}k=u\cdot (q^2+2)+2c_{1} \quad \text{and} \quad  2c_{1}^{2}\lambda=u^2+\frac{3u^2+2c_{1}u}{q^2-1}.
    \end{align}
    Recall that $k$ divides $\lambda\cdot c_{1} f(q)$. Then \eqref{eq:k-lam-A-ii-2-2-2-SP} implies that $u(q^2+2)+2c_{1}\leq 2\lambda \cdot c_{1}^2 f(q)$, and so
    \begin{align}\label{eq:u-A-ii-2-SP}
    u< 2 c_{1}^{2}\lambda.
    \end{align}
    Note that $\lambda$ is an odd prime divisor of $k$. Then Lemmas \ref{lem:New} and \ref{lem:six}(b) implies that $\lambda$ divides $4aq^{2}(q^{2}-1)^2$. Therefore, $\lambda$ divides $a$, $p$, $(q-1)/2$ or $(q+1)/2$. We now analyse each of these possibilities.\smallskip
    
    \noindent\textbf{(3.3.1)} Let $\lambda$ divides $a$. By \eqref{eq:u-A-ii-2-SP}, we have that $u<2a\cdot c_{1}^{2}$. Note that $\lambda$ is an integer number. Then \eqref{eq:k-lam-A-ii-2-2-2-SP} implies that $q^2-1$ must divide $3u^2+2c_{1}u$, where $u<2a\cdot c_{1}^{2}$.  Thus $q^2-1\leq 3u^2+2c_{1}u\leq 12a^2\cdot c_{1}^{4}+4a\cdot c_{1}^3$, and so $q^2<16a^2\cdot c_{1}^{4}$, where $c_{1}=\gcd(3, q+1)$, and this holds only for the pairs $(p,a)\in\{(3,1), (5,1)\}$, and so $\lambda$ divides $a=1$, which is impossible.\smallskip
    
    \noindent\textbf{(3.3.2)} Let $\lambda$ divides $p$. Since $\lambda>1$, we have that $\lambda=p$, and so by \eqref{eq:u-A-ii-2-SP}, we have that $u<2p\cdot c_{1}^{2}$. As $\lambda$ is  a positive integer, it follows from \eqref{eq:k-lam-A-ii-2-2-2-SP} that $q^2-1$  divides $3u^2+2c_{1}u$, where $u<2p\cdot c_{1}^{2}$. Thus $q^2-1\leq 3u^2+2c_{1}u\leq 12p^2\cdot c_{1}^{4}+4p\cdot c_{1}^3$, and so $q^2<16p^2\cdot c_{1}^{4}$, where $c_{1}=\gcd(3, q+1)$. Thus either $(p,a)=(3,2)$, or $a=1$. If $(p,a)=(3,2)$, then by \eqref{eq:v-A-ii-2-2-2-SP}, we have that $v=3321$ and that  $k$ divides $\lambda\cdot c_{1} f(p^{2})=3f(9)=240$. Since $\lambda=3$, we conclude that $3(v-1)=k(k-1)$, for some divisor $k$ of $240$, which is a contradiction. Thus $a=1$, and so $q=p$. Since $\lambda$ is an odd prime divisor of $k$, it follows from  \eqref{eq:k-lam-A-ii-2-2-2-SP} that $q=p$ must divide $u+c_{1}$. Let now $u_{1}$ be a positive integer such that $u=u_{1}p-c_{1}$. Then by \eqref{eq:u-A-ii-2-SP}, $u_{1}p-c_{1}<2p\cdot c_{1}^{2}$, and since $p\geq 3\geq c_{1}=\gcd(3,p+1)$, we have that  $u_{1}p<2p\cdot c_{1}^{2}+c_{1}\leq 2p\cdot c_{1}^{2}+p$, and so
    \begin{align}\label{eq:u1-A-ii-2-SP}
    u_{1}< 2c_{1}^{2}+1.
    \end{align}
    Note by \eqref{eq:k-lam-A-ii-2-2-2-SP} that $p^2-1$ divides $3u^2+2u\cdot c_{1}$, where $u=u_{1}p-c_{1}$. Thus $p^2-1$ divides $3u_{1}^2p^2-4u_{1}p\cdot c_{1}+c_{1}^2$. Since $3u_{1}^2p^2-4u_{1}p\cdot c_{1}+c_{1}^2=3u_{1}^{2}(p^{2}-1)+3u_{1}^2-4u_{1}p\cdot c_{1}+c_{1}^2$, we conclude that $p^2-1$ divides $|3u_{1}^2-4u_{1}p\cdot c_{1}+c_{1}^2|$. Set $f_{p}(x):=3x^2-4 pc_{1}x+c_{1}^2$. For a fixed $p$, the quadratic equation $f_{p}(x)=0$ has roots $x_{i} =c_{1}\cdot[2p+(-1)^{i}\sqrt{4p^2-3}]/3$, for $i=1,2$. Note that $x_{2}>2c_{1}^{2}$ and $x_{1}<1$. Therefore, for all $u_{1}$ satisfying \eqref{eq:u1-A-ii-2-SP}, we have that $f_{p}(u_{1})<0$. Thus, $p^2-1$ must divide $-f_{p}(u_{1})=-3u_{1}^2+4u_{1}p\cdot c_{1}-c_{1}^2$. Then $p^2\leq 4 pc_{1}u_{1}$, and so by \eqref{eq:u1-A-ii-2-SP}, we conclude that $p\leq 8c_{1}^3$. This inequality holds only for $p=5,7, \ldots, 197$, for these values of $p$, as $c_{1}=\gcd(3, p+1)$, we can find $u_{1}$ by \eqref{eq:u1-A-ii-2-SP}, but for the pairs $(p,u_{1})$, we observe that $p^2-1$ does not divide $3u_{1}^2p^2-4u_{1}p\cdot c_{1}+c_{1}^2$ except for the case where $(p, u_{1}) \in\{(5,1),(5,3),(5,5),(11,1),(11,3),(17,3)\}$. Note here that $q=p$. Then for each such pairs $(p, u_{1})$, by \eqref{eq:v-A-ii-2-2-2-SP}, we can obtain $v$ as in the second column of Table~\ref{tbl:PSp-A-ii}. Recall that $k$ divides $\lambda\cdot c_{1} f(p)$, where $f(p)=p^2-1$, $\lambda =p$ and $c_{1}=\gcd(3, p+1)$, and so we can find the possible values of $k$ as in the third column of Table~\ref{tbl:PSp-A-ii}.  This is a contradiction as for each $k$ and $v$ as in Table~\ref{tbl:PSp-A-ii}, the equality $p=\lambda=k(k-1)/(v-1)$ does not hold. \smallskip
    
    \begin{table}
        \small
        \centering
        \caption{Some parameters for Case 3.3.2 in Proposition~\ref{prop:psp}}\label{tbl:PSp-A-ii}
        \begin{tabular}{clll}
            \noalign{\smallskip}\hline\noalign{\smallskip}
            Line &
            $(p, a)$ &
            $v$ &
            $k$ divides
            \\
            \noalign{\smallskip}\hline\noalign{\smallskip}
            $1$ &
            $(5,1)$  &
            $325$ &
            $360$
            \\
            $2$ &
            $(11,1)$ &
            $7381$ &
            $ 3960$
            \\
            $3$ &
            $(17,3)$ &
            $41905$ &
            $14688$
            \\
            %
            %
            \noalign{\smallskip}\hline\noalign{\smallskip}
        \end{tabular}
    \end{table}

    \noindent\textbf{(3.3.3)} Let $\lambda$ divides $(q-\e1)/2$, where $\e\in\{+,-\}$. Then $\gcd(\lambda, p)=1$. On the other hand by Lemma \ref{lem:Tits}, we know that $\gcd(v-1, p)=1$, and so Lemma \ref{lem:six}(a) implies that $\gcd(k,p)=1$. It follows from Lemmas~\ref{lem:New} and~\ref{lem:six}(a) that $k$ divides $|\Out(X)|\cdot |H\cap X|_{p'}=4a(q^{2}-1)^{2}$. Then \eqref{eq:k-lam-A-ii-2-2-2-SP} implies that
    \begin{align}\label{eq:G2-SL-k-u}
    u\cdot (q^2+2)+2c_{1} \text{ divides } 8a\cdot c_{1}(q^2-1)^2.
    \end{align}
    Note that $8ac_{1}u\cdot (q^2-1)^2=8ac_{1}h(q)[u\cdot (q^2+2)+2 c_{1}]+G(u,q)$, where $h(q) =q^2-4$ and $G(u,q)=8ac_{1}[9u-2c_{1}h(q)]$. Then $G(u,q)=0$ or
    we conclude by \eqref{eq:G2-SL-k-u} that
    \begin{align}\label{eq:G2-SL-k-m-2}
    u\cdot (q^2+2)+2\cdot c_{1} \text{ divides }  |G(u,q)|.
    \end{align}
    
    Suppose that $G(u,q)=0$. Then $9u=2c_{1}h(q)$. Then $u=2c_{1}h(q)/9=2c_{1}(q^2-4)/9$. Then \eqref{eq:k-lam-A-ii-2-2-2-SP} implies that $\lambda=2(q^2-1)(q^{2}-4)/81$, which is impossible.
    
    Suppose now that $G(u,q)>0$. Then $u>2(q^2-4)/9$ and by \eqref{eq:G2-SL-k-m-2}, $u\cdot (q^2+2)+2\cdot c_{1}<|G(u,q)|=72ac_{1}u-16ac_{1}^{2}h(q)\leq  72ac_{1}u$, and so $q^2+2<72a c_{1}$. Since $r(q) = 9$, it follows that $q^2+2<72a\cdot c_{1}$. This inequality holds when $q=3,5,7,9,11$. Note by \eqref{eq:u-A-ii-2-SP} that $u<c_{1}^{2}(q+1)$ as $\lambda$ divides $(q-\e1)/2$. Thus for $q\in\{3,5,7,9,11\}$, as $2(q^2-4)/9 <u<c_{1}^{2}(q+1)$, we have that $(q,u)\in\{(3,2), (3,3), (5,5), (5,6), \ldots,(5,17), (11,27), (11,28), \ldots,(11,35)\}$. We can now check \eqref{eq:k-lam-A-ii-2-2-2-SP} for these pairs $(q,u)$, and observe that for no such pairs, $\lambda$ is prime.
    
    Suppose finally that $G(u,q)<0$. Then \eqref{eq:G2-SL-k-m-2} implies that $u\cdot (q^2+2)+2c_{1}<|G(u,q)|= 16ac_{1}^{2}h(q)-72ac_{1}u<16ac_{1}h(q)=16ac_{1}\cdot (q^{2}-4)$, and so $u< 16ac_{1}$. Note by \eqref{eq:k-lam-A-ii-2-2-2-SP} that $q^{2}-1$ divides $3u^{2}+2 c_{1}u$. Then $q^{2}-1\leq 3u^{2}+2 c_{1}u<3\cdot 16^{2}a^{2}c_{1}^{2}+2\cdot 16ac_{1}^{2}$, and so $q^{2}-1<2^{10}a^{2}c_{1}^{2}$, and this holds only for $q=p^{a}$ as in Table \ref{tbl:G2-SL}. For each $q$, we can find an upper bound for $u$ listed in the same table, and it is easy to check by \eqref{eq:k-lam-A-ii-2-2-2-SP} that these possible pairs $(q,u)$ give rise to no possible parameters with $\lambda$ prime. \smallskip
    
    \begin{table}
        \small
        \centering
        \caption{Some parameters for Case 3.3.3 in Proposition~\ref{prop:psp}}\label{tbl:G2-SL}
        \begin{tabular}{cccccc} \noalign{\smallskip}\hline\noalign{\smallskip}
            $p$ &
            $3$ &
            $5$ &
            $7$ &
            $11$, $13$, \ldots $89$  \\
            \noalign{\smallskip}\hline\noalign{\smallskip}
            $a\leq $ &
            $4$ &
            $3$ &
            $2$ &
            $1$ \\
            $u< $ &
            $82$ &
            $378$ &
            $50$ &
            $270$ \\
            \noalign{\smallskip}\hline\noalign{\smallskip}
        \end{tabular}
    \end{table}

    \noindent \textbf{(4)}  Let $H = N_{G}(X(q_{0}))$ with $q=q_{0}^t$ odd and $t$ odd prime. Then by~\cite[Proposition 4.5.4]{b:KL-90}, the subgroup $H_{0}$ is isomorphic to $\PSp_{2m}(q_{0})$ with $q=q_{0}^t$. As $|\Out(X)|$ divides $2a$,  by Lemma~\ref{lem:up-lo-b} and Corollary \ref{cor:large}, we have that $q_{0}^{tm(2m-1)}<16a^2{\cdot}q_{0}^{3m(2m+1)}$. Since $a^2<2q$,  it follows that
    \begin{align}\label{eq:A-i-U}
    q_{0}^{t(2m^2-m-1)}<32{\cdot}q_{0}^{6m^2+3m}.
    \end{align}
    As $q_{0}$ is odd, $q_{0}^{t(2m^2-m-1)}<q_{0}^{6m^2+3m+4}$. Thus  $t(2m^2-m-1)<6m^2+3m+4$. If $t\geq 9$, then $9(2m^2-m-1)\leq t(2m^2-m-1) <6m^2+3m+4$, and so $12m^2<12m+13$, which is impossible. Therefore, $t=3, 5, 7$. If $t=7$, then by \eqref{eq:A-i-U}, we have that $q_{0}^{8m^2-10m-7}<32$. As $m\geq 2$ and $q_{0}$ is odd, $3^{5}\leq q_{0}^{8m^2-10m-7}<32$, and so $3^{5}<32$, which is impossible. If $t=5$, then \eqref{eq:A-i-U} implies that $q_{0}^{4m^2-8m-5}<32$, and this inequality holds only for $m=2$. If $(m, t)=(2, 5)$, then by \eqref{eq:v}, we have that
    \begin{align*}
    v=\frac{q_{0}^{16}(q_{0}^{20}-1)(q_{0}^{10}-1)}{(q_{0}^{4}-1)(q_{0}^{2}-1)}>q_{0}^{35}.
    \end{align*}
    By Lemmas \ref{lem:New} and \ref{lem:six}(b), the parameter $k$ divides $2a{\cdot}q_{0}^{4}(q_{0}^{4}-1)(q_{0}^{2}-1)$. It follows from Lemmas \ref{lem:six} and~\ref{lem:Tits} that $k$ divides $2\lambda a{\cdot}(q_{0}^{4}-1)(q_{0}^{2}-1)$. Then by Lemma~\ref{lem:six}(b), we conclude that $\lambda q_{0}^{35}< \lambda v <k^{2}\leq 4\lambda^2 a^2{\cdot}(q_{0}^{4}-1)^2(q_{0}^{2}-1)^2<4\lambda^2 a^2{\cdot}q_{0}^{12}$. Hence, $q_{0}^{23}< 4\lambda a^2$. Since $k$ divides $2a{\cdot}q_{0}^{4}(q_{0}^{4}-1)(q_{0}^{2}-1)$ and $\lambda$ is an odd prime divisor of $k$, we conclude that $\lambda \leq q_{0}^2+1$. Then the inequality $q_{0}^{23}< 4a^2\lambda$ implies that $q_{0}^{21}<8a^2$, and since $a=ts=5s$, it follows that $q_{0}^{21}<200{\cdot} s^2$, which is impossible. Hence $t=3$. In this case by \eqref{eq:v} and Lemma \ref{lem:up-lo-b}, we have that $v>q_{0}^{4m^2-4m-2}$. It follows from Lemmas \ref{lem:New} and \ref{lem:six} and Tits' Lemma~\ref{lem:Tits} that $k$ divides $2a\lambda{\cdot}g(q_{0})$, where $g(q_{0})=(q_{0}^{2m}-1){\cdots}(q_{0}^{2}-1)$. By Lemma \ref{lem:six}(b), we conclude that $\lambda q_{0}^{4m^2-4m-2}< \lambda v <k^{2}\leq 4\lambda^2 a^2{\cdot}(q_{0}^{2m}-1)^2{\cdots}(q_{0}^{2}-1)^2$. Thus
    \begin{align}\label{eq:eq-A-i-S}
    q_{0}^{2m^2-6m-2}<4a^2\lambda.
    \end{align}
    Note that $\lambda$ is an odd prime divisor of $k$ and $k$ divides $|H|$. Then $\lambda$ must divide $a$, $p$ or $(q_{0}^{2j}-1)$, for some $j\in\{1, \ldots, m\}$, and so
    \begin{align}\label{eq:lam-A-i-S}
    \lambda \leq q_{0}^m+1.
    \end{align}
    Therefore, by the inequality \eqref{eq:eq-A-i-S}, we have that $q_{0}^{2m^2-6m-2}<4a^2{\cdot}(q_{0}^m+1)$. As $a=ts=3s$ and $q_{0}^m+1<2q_{0}^m$, we conclude that $q_{0}^{2m^2-7m-2}<72s^2$ implying that  $m=2, 3, 4$. If $m=2$, then by \eqref{eq:v}, we have that $v=q_{0}^{8}(q_{0}^{8}+q_{0}^{4}+1)(q_{0}^{4}+q_{0}^{2}+1)>q_{0}^{20}$. Here by Lemma \ref{lem:six}(a)-(c), $k$ divides $2\lambda a {\cdot}\gcd(v-1, |H\cap X|)$. Then by Lemma \ref{lem:Tits} and the fact that $\gcd(v-1, q_{0}^2+1)=2$, we conclude that $k\leq 4\lambda a {\cdot}(q_{0}^2-1)^2$. So Lemma \ref{lem:six}(b) implies that $\lambda q_{0}^{20}<\lambda v< k^2\leq 16\lambda^2 a^2(q_{0}^2-1)^4$. Thus, $q_{0}^{12}<16\lambda a^2$. So by \eqref{eq:lam-A-i-S}, we have that $q_{0}^{12}<16a^2{\cdot}(q_{0}^2+1)$. Recall that $a=3s$ and $q_{0}^2+1<2q_{0}^2$. Then $q_{0}^{12}<2^5{\cdot}3^2{\cdot}s^2$, which is impossible. By the same manner as above, the remaining cases where $m=3, 4$ can be ruled out.
\end{proof}

\begin{proposition}\label{prop:orth}
    Let $\Dmc$ be a nontrivial symmetric $(v,k,\lambda)$ design with $\lambda$ prime. Suppose that $G$ is an automorphism group of $\Dmc$ of almost simple type with socle $X$. If $G$ is flag-transitive, then the socle $X$ cannot be $\POm_{n}^{\e}(q)$ with $\e \in \{\circ, -, +\}$.
\end{proposition}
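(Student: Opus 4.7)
The plan is to mirror the strategy of Propositions~\ref{prop:psl}--\ref{prop:psp}. Setting $H_0 = H \cap X$, since $v$ is odd, Lemma~\ref{lem:odd-deg} reduces the analysis to the following cases: (1) $q$ is even and $H_0$ is a maximal parabolic subgroup; (2) $q$ is odd and $H$ stabilises a nonsingular subspace; (3) $q$ is odd and $H_0$ stabilises an orthogonal decomposition $V = \bigoplus V_j$ with all $V_j$ isometric; (4) $q = q_0^t$ is odd with $t$ an odd prime and $H = N_G(X(q_0))$; together with the two sporadic configurations from Lemma~\ref{lem:odd-deg}(a)(i)(4)--(5) for $X = \Omega_7(q)$ or $X = \POm_8^+(q)$ with $q \equiv \pm 3 \pmod{8}$ prime.

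For each generic case I would (a) bound $v$ from below using \eqref{eq:v} and Lemma~\ref{lem:up-lo-b}; (b) produce a polynomial $f(q)$ of minimal degree with $k \mid \lambda f(q)$, combining Lemma~\ref{lem:six}(b,c), the subdegree estimate of Lemma~\ref{lem:subdeg}, and, in odd characteristic, the coprimality $\gcd(v-1,p)=1$ from Lemma~\ref{lem:Tits}; and (c) bound $\lambda$ from above by the largest odd prime that can divide $|H|$ via Lemma~\ref{lem:New}. Feeding these estimates into $\lambda v < k^2 \leq \lambda^2 f(q)^2$ from Lemma~\ref{lem:six}(b) typically yields an inequality of the form $q^{A} < u_{\lambda}\, q^{B}$ in which $A$ grows with the rank faster than $B$, collapsing the problem to a short list of residual $(n,q)$. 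These residuals are then eliminated by the Diophantine analysis used in step~(3) of Proposition~\ref{prop:psl} and step~(2) of Proposition~\ref{prop:unitary} --- writing $uk=\lambda f(q)$, expressing $k$ and $\lambda$ in terms of $u$ and $q$ via Lemma~\ref{lem:six}(a), and bounding $u$ --- or, for the finitely many tiny $q$ that remain, by a direct \textsf{GAP}~\cite{GAP4} check that no divisor $k$ of $|\Out(X)|\cdot|H_0|$ satisfies $k(k-1)=\lambda(v-1)$ with $\lambda$ an odd prime $\geq 5$. The orthogonal-decomposition case additionally uses Lemma~\ref{lem:factor-bound} to convert $t!$ into an exponential bound and so bound $t$, and the subfield case uses Corollary~\ref{cor:large} to reduce immediately to $t=3$.

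The main obstacle I anticipate is twofold. First, the triality stabiliser $2^3{\cdot}2^6{\cdot}\PSL_3(2)$ in $\POm_8^+(q)$, together with the two sporadic configurations $\SO_7(2) < \Omega_7(q)$ and $\Omega_8^+(2) < \POm_8^+(q)$, do not admit a clean polynomial description of $|H|$ in $q$, so the generic $\lambda v < k^2$ machinery has to be replaced by direct computation of $v$ for each admissible $q \equiv \pm 3 \pmod{8}$ prime and enumeration of divisors $k$ of $|H|$, using the primality of $\lambda$ to exclude integer solutions. Second, Lemma~\ref{lem:subdeg} explicitly excludes $X = \POm_{2m}^+(q)$ with $m$ odd from the unique-$p$-power-subdegree conclusion, so in the parabolic case~(1) for that socle I would instead invoke Lemma~\ref{lem:divisible} applied to the quasi-simple Levi factor of the parabolic. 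This forces either $p \mid k$ (incompatible with $\lambda v < k^2$ since then $k \geq p^{\operatorname{rk}}$ is too large a multiple of $\lambda$ for the counting identity to hold) or $k$ to be divisible by the index of a parabolic inside the Levi, which is again incompatible with the lower bound on $v$ once the rank of $X$ is moderately large; the remaining small ranks are handled by the same computational check as in Propositions~\ref{prop:psl}--\ref{prop:psp}.
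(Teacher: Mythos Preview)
Your case division and the generic machinery (lower bound for $v$, upper bound $k\mid\lambda f(q)$, upper bound for $\lambda$, then $\lambda v<k^2$) matches the paper's argument essentially line for line, including the use of Lemma~\ref{lem:factor-bound} in the imprimitive case and Corollary~\ref{cor:large} to force $t=3$ in the subfield case.

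There is, however, a genuine gap in your treatment of the parabolic $P_m$ in $X=\POm_{2m}^{+}(q)$ with $m$ odd. You propose to replace Lemma~\ref{lem:subdeg} by Lemma~\ref{lem:divisible} applied to the Levi factor $\SL_m(q)$, and then claim that the branch $p\mid k$ is ``incompatible with $\lambda v<k^2$ since then $k\geq p^{\mathrm{rk}}$''. That implication is not correct: Lemma~\ref{lem:divisible} says nothing about the size of $k$ in the $p\mid k$ branch, only that $p$ divides it. Knowing $2\mid k$ (here $q$ is even) gives no useful lower bound on $k$, and there is no reason $k$ should be as large as $p^{\mathrm{rk}}$. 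So this branch is left unhandled.

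The paper avoids Lemma~\ref{lem:divisible} here entirely. Instead it quotes Saxl~\cite[p.~332]{a:Saxl2002}, which for $H=P_m$ with $m$ odd produces an explicit subdegree, giving $k\mid \lambda q(q^m-1)$ directly. Feeding this into $\lambda v<k^2$ with $v>q^{m(m-1)/2}$ and $\lambda\leq (q^m-1)/(q-1)$ forces $m(m-1)<6m+4$, hence $m\in\{5,7\}$. For $m=5$ the action has rank~$3$ and Dempwolff~\cite{a:Dempwolff2001} applies; for $m=7$ one checks $\gcd(v-1,(q^7-1)/(q-1))=1$, so in fact $k\mid\lambda q(q-1)$, and $\lambda v<k^2$ then fails outright. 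You should replace your Lemma~\ref{lem:divisible} argument by this subdegree input. (A secondary point: when $G$ contains a triality automorphism of $\POm_8^{+}(q)$ there is an extra parabolic $H_0\cong[q^{11}]{:}(q-1)^2{\cdot}\GL_2(q)$ not among the standard $P_i$; the paper handles it separately via $(v-1)_p=q$, and your generic parabolic argument should accommodate it too.)
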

\begin{proof}
    Let $H_{0}=H\cap X$, where $H=G_{\alpha}$ with $\alpha$ a point of $\Dmc$. Note by Lemma \ref{lem:six}(a) that $v$ is odd, and so by Lemma \ref{lem:odd-deg}, we have one of the following possibilities:
    \begin{enumerate}[\rm (1)]
        \item $q$ is even, and $H_{0}$ is a parabolic subgroup of $X$;
        \item $q$ is odd, and $H$ is the stabilizer of a nonsingular subspace;
        \item $q$ is odd, and $H_{0}$ is the stabilizer of an orthogonal decomposition $V=\oplus V_{j}$ with all $V_{j}$'s isometric;
        \item $H_{0}$ is $\SO_{7}(2)$ or $\Omega_{8}^{+}(2)$ and $X$ is $\Omega_{7}(q)$ or $\POm_{8}^{+}(q)$, respectively, $q=p\equiv \pm 3 \mod{8}$;
        \item $X=\POm_{8}^{+}(q)$, $q=p\equiv \pm 3 \mod{8}$, $G$ contains a triality automorphism of $X$ and $H_{0}$ is $2^3\cdot 2^6\cdot \PSL_{3}(2)$;
        \item $q=q_{0}^t$ is odd with $t$ odd prime, and $H = N_{G}(X(q_{0}))$.
    \end{enumerate}
    Note in the cases (1) and (6) for $X=\Omega_{2m+1}(q)$ that we argue exactly the same as in the symplectic groups. Therefore, we exclude these possibilities, and analyse the remaining cases.\smallskip
    
    \noindent \textbf{(1)} Let  $H_{0}$ be a parabolic subgroup of $X$, and let $q$ be even. As noted above, we only need to consider the case where $X=\POm_{2m}^{\e}(q)$ with $(m,\e)\neq (2,+)$, $\e=\pm$ and $q$ even. We postpone the case where $(m,\e)=(4,+)$ and $G$ contains a triality automorphism till the end of this case. In this case by~\cite[Proposition 4.1.20]{b:KL-90}, $H_{0}$ is isomorphic to $[q^{h}]\cdot \GL_{i}(q)\times \Omega_{2m-2i}^{\e}(q)$, where $h=2mi-(3i^2+i)/2$.
    
    Suppose first that $H$ stabilises a totally singular $i$-space with $i\leq m-1$, and so $H=P_{i}$ excluding the case where $i=m-1$ and $\e=+$, where  $H=P_{m, m-1}$. It follows from~\eqref{eq:v} and Lemma \ref{lem:up-lo-b} that $v>2^{-5}q^{(4mi-3i^2-i-2)/2}$.  Note that $\lambda$ is an odd prime divisor of $k$ and $|\Out(X)|$ divides $6a$. Then by Lemma \ref{lem:six}(b), we have that
    \begin{align}\label{eq:lam-B-O+-}
    \lambda \leq q^{m-1}+1.
    \end{align}
    In all cases, by Lemma~\ref{lem:subdeg} there is a unique subdegree $d$ of $X$ that is a power of $p$ except for the case where $\e=+$, $m$ is odd and $H=P_m$ or $P_{m-1}$. Note that the $p$-part $(v-1)_{p}$ of $v-1$ is $q^2$ or $8$. Since $k$  divides $\lambda \gcd(v-1, d)$, it follows that $k$ divides $\lambda q^3$. It follows from Lemma \ref{lem:six}(b) that $\lambda q^{(4mi-3i^2-i-2)/2}<32\lambda v < 32 k^2\leq 32\lambda^2 q^{6}$. Therefore, $q^{(4mi-3i^2-i-2)/2}<32\lambda q^{6}$, an so by \eqref{eq:lam-B-O+-}, we have that
    \begin{align}\label{eq:eq-B-O+-}
    q^{(4mi-3i^2-i-2)/2}<32q^{6}(q^{m-1}+1).
    \end{align}
    Then $q^{(4mi-3i^2-i-2)/2}<2^5\cdot q^6(q^{m-1}+1)$. Since $q^{m-1}+1<2q^{m-1}$, we have that $q^{2m(2i-1)-3i^2-i-12}<2^{12}$. Since also $m\geq i+1$, it follows that $2(i+1)(2i-1)\leq 2m(2i-1)<3i^2+i+24$, and so $i^2+i<26$, then $i\in\{1, 2, 3, 4\}$. If $i=1$, then $2m=2m(2i-1)<3i^2+i+24=28$, and so $m=4, \ldots, 13$. By \eqref{eq:v}, we have that $v=(q^m-\e1)(q^{m-1}+\e1)/(q-1)$. Recall that there is a unique subdegree $d$ of $X$ that is a power of $p$. Since $k$ divide $\lambda \gcd(v-1, d)$, it follows that $k$ divides $\lambda q$. Thus Lemma \ref{lem:six}(b) that $\lambda(q^{m-1}+\e1)(q^m-\e1)/(q-1)\leq \lambda v<k^2\leq \lambda^2 q^2$, and so $(q^{m-1}+\e1)(q^m-\e1)<\lambda q^2(q-1)$. Then by \eqref{eq:lam-B-O+-}, we have that $(q^m-\e1)(q^{m-1}+\e1)<q^2(q-1)(q^{m-1}+1)$. If $\e=+$, then $(q^{m-1}+1)(q^m-1)<q^2(q-1)(q^{m-1}+1)$, and so $(q^m-1)<q^2(q-1)$, which does not hold for any $m\geq 4$, which is a contradiction. If $\e=-$, then $(q^{m-1}-1)(q^m+1)<q^2(q-1)(q^{m-1}+1)$, and so $q^{2m-1}-q^m+q^{m-1}-1<q^{m+2}-q^{m+1}+q^3-q^2$, and so $q^{2m-3}-q^{m-2}+q^{m-3}\leq q^{m}-q^{m-1}+q-1$. Since $q^{m-3}>q-1$, $q^{m-2}(q^{m-1}-1) \leq q^{m-2}(q^{2}-q)$, and so $(q^{m-1}-1) \leq (q^{2}-q)$, which is impossible.  For the remaining cases $i=2, 3, 4$, we argue exactly as in the case where $i=1$.\smallskip
    
    Suppose finally that $H=P_{m}$ when $X=\POm_{2m}^{+}(q)$. Note that here $P_{m-1}$ and $P_{m}$ are the stabilizers of totally singular $m$-spaces from the two different $X$-orbits. Here by  \eqref{eq:v}, we have that
    \begin{align}
    v=(q^{m-1}+1)(q^{m-2}+1)\cdots(q+1)>q^{m(m-1)/2}.
    \end{align}
    Note that $\lambda$ is an odd prime divisor of $k$ and $|\Out(X)|$ divides $6a$. Then by Lemma \ref{lem:six}(b), $\lambda$ must divide $3$, $a$ or $q^{j}-1$, for some $j\in \{1,\ldots, m\}$. Thus
    \begin{align}\label{eq:lam-B-2-O+-}
    \lambda \leq (q^{m}-1)/(q-1).
    \end{align}
    
    Assume that $m$ is even. Note by \cite[p. 332]{a:Saxl2002} that there is a subdegree $d$  which is a power of $p$. On the other hand, the $p$-part of $v-1$ is $q$. Since $k$ divides $\lambda \gcd(v-1, d)$, we have that $k$ divides $\lambda q$, and so Lemma \ref{lem:six}(b) implies that $\lambda q^{m(m-1)/2}<\lambda v<k^2<\lambda^2 q^{2}$, and so $q^{m(m-1)/2}<\lambda q^2$. Thus by \eqref{eq:lam-B-2-O+-}, we conclude that $q^{m(m-1)/2}(q-1)<(q^m-1)q^2$, and so $m(m-1)<2m+4$, which is impossible for $m\geq 4$. \smallskip
    
    Assume that $m$ is odd. Then \cite[p. 332]{a:Saxl2002} implies that $k$ divides $\lambda q(q^m-1)$, and so by Lemma \ref{lem:six}(b), we have that $\lambda q^{m(m-1)/2}<\lambda v<k^2<\lambda^2 q^{2}(q^m-1)^2$. Thus $q^{m(m-1)/2}<\lambda q^{2}(q^m-1)^2$. Then \eqref{eq:lam-B-2-O+-} implies that $q^{m(m-1)/2}(q-1)<q^{2}(q^m-1)^3$, and so $m(m-1)<6m+4$, then $m=5, 7$. If $m=5$, then action here is of rank three. The symmetric designs with a primitive rank $3$ automorphism group have been classified by Dempwolff  \cite{a:Dempwolff2001}, we know that there is no such symmetric design with $\lambda$ prime. If $m=7$, then since $k$ divides $\lambda q(q^7-1)$ and $\gcd(v-1, q^6+q^5+q^4+q^3+q^2+q+1)=1$, the parameter $k$ must divide $\lambda q(q-1)$. It follows from Lemma \ref{lem:six}(b), that $\lambda q^{21}<\lambda v<k^2<\lambda^2 q^{2}(q-1)^{2}$, and so $q^{21}<\lambda q^2(q-1)^{2}$. Thus by \eqref{eq:lam-B-2-O+-}, we conclude that $q^{21}<q^2(q-1)^{2}(q^7-1)$, which is impossible.\smallskip
    
    Let now $X=\POm_{8}^{+}(q)$, and let $G$ contain a triality automorphism. We use \cite[Table 8.50]{b:BHR-Max-Low}, where the maximal subgroups are determined. By case (1),  we only need to consider the case where $H\cap X$ is isomorphic to $[q^{11}]{:}(q-1)^2\cdot \GL_{2}(q)$. By \eqref{eq:v}, we have that $v=(q^6-1)(q^4-1)^2/(q-1)^3> q^{11}$.  Since the $p$-part of $v-1$ is $q$ and $k$ divides $\lambda \gcd(v-1, d)$, it follows that $k$ divides $\lambda q$. Then Lemma \ref{lem:six}(b) implies that $\lambda q^{11}<\lambda v<k^2<\lambda^2 q^{2}$, and so $q^{11}<\lambda q^2$. Note that $\lambda$ is and odd prime divisor of $k$ dividing $|\Out(X)|\cdot |H\cap X|$. Then Note that $\lambda$ is an odd prime divisor of $k$ and $|\Out(X)|$ divides $6a$. Then $\lambda$ must divide $3$, $a$ or $q^j-1$, for some $j\in\{1,2\}$, and so $
    \lambda \leq \max\{3, q+1\}\leq q+1$. Recall that $q^{11}<\lambda q^2$. Therefore,  $q^{11}<q^2(q+1)$, which is a impossible.\smallskip

    \noindent \textbf{(2)} Let $H$ be the stabilizer of a nonsingular subspace, and let $q$ be odd. Here, we need to discuss the odd and even dimension of the underlying orthogonal space separately.   \smallskip
    
    \noindent \textbf{(2.1)} Let $X=\Omega_{2m+1}(q)$ with $q$ odd and $m\geq 3$. In this case $H=N_{i}^{\e}$ with $i\leq m$. If $i=1$, then by~\cite[Proposition 4.1.6]{b:KL-90}, $H_{0}$ is isomorphic to $\,^{\hat{}}\Omega_{2m}^{\e}(q){\cdot}2$ with $\e\in\{+, -\}$. It follows from \eqref{eq:v} that $v=q^{m}(q^{m}+\e1)/2$. Note here that if $\e=-$, then $m$ is odd as $v$ must be odd.
    
    According to \cite[p.331-332]{a:Saxl2002}, $k$ must divide $\lambda d^{\e}(q)$, where $d^{\e}(q)=(q^m-\e1)/2$. Let $u$ be a positive integer such that $uk=\lambda (q^m-\e1)/2$. Then by Lemma \ref{lem:six}(a), we have that
    \begin{align}\label{eq:k-lam-A-ii-Oo}
    k=u\cdot (q^{m}+\e2)+1 \quad \text{and} \quad \lambda=2u^2+\frac{\e 3u^2+u}{d^{\e}(q)}.
    \end{align}
    Since $uk=\lambda d^{\e}(q)=\lambda (q^m-\e1)/2$, it follows from Lemma \ref{lem:six}(b) that $\lambda v<k^2\leq \lambda^2 d^{\e}(q)^2/u^2$. Therefore,
    \begin{align}\label{eq:eq-A-ii-1-Oo}
    2u^2q^m(q^m+\e1)<\lambda (q^m-\e1)^2.
    \end{align}
    Note that here $|\Out(X)|=2a$ and $\lambda$ is an odd prime divisor of $k$. Then by Lemmas \ref{lem:New} and \ref{lem:six}(b), $\lambda$ must divide $a$, $p$, $q^{m}-\e1$ or $q^{2j}-1$, for some $j\in\{1,\ldots, m-1\}$. Considering all these possible cases, it is easy to see that $\lambda \leq (q^{m}-\e1)/(q-\e1)$. So by \eqref{eq:eq-A-ii-1-Oo}, we have that $2u^2q^m(q^m+\e1)<\lambda (q^m-\e1)^2/(q-\e1)\leq (q^m-\e1)^3/(q-\e1)$.  Therefore,
    \begin{align}\label{lem31-O-u}
    u^2<\frac{(q^m-\e1)^3}{2q^m(q^m+\e1)(q-\e1)}.
    \end{align}
    Note that $\lambda$ is an integer number. Then  \eqref{eq:k-lam-A-ii-Oo} implies that $d^{\e}(q)$ must divide $|\e 3u^2+u|$, where $d^{\e}(q)=(q^m-\e1)/2$.
    
    Let now $\e=+$. Then, by \eqref{lem31-O-u}, we conclude that
    \begin{align*}
    \frac{(q^{m}-1)}{2}=d^{+}(q)\leq 3u^2+u\leq 4u^{2}<\frac{2(q^m-1)^3}{q^m(q^m+1)(q-1)},
    \end{align*}
    and so $q^m(q^m+1)(q-1)<4(q^m-1)^{2}$. Therefore,  $(q-1)<4(q^m-1)^{2}/[q^m(q^m+1)]<4$, and hence $q=3$. In this case, $G$ has rank $3$ by \cite[Theorem 1.1]{a:Kantor-rank3}, and by \cite{a:Dempwolff2001}, we know that there is no such symmetric design with $\lambda$ prime.
    
    Let now $\e=-$. Then \eqref{lem31-O-u} yields
    \begin{align*}
    \frac{(q^{m}+1)}{2}=d^{-}(q)\leq 3u^2-u< 3u^{2}<\frac{3(q^m+1)^3}{2q^m(q^m-1)(q+1)},
    \end{align*}
    and so $q^m(q^m-1)(q+1)<3(q^m+1)^{2}$. Since $q\geq 3$, it follows that $4q^m(q^m-1)<3(q^m+1)^{2}$, and so $q^{2m}<10q^{m}+3$, which is impossible as $m\geq 3$.
    
    Therefore, $i\geq 2$. Here by~\cite[Proposition 4.1.6]{b:KL-90}, $H\cap X$ is isomorphic to $\Omega_{i}^{\e}(q)\times \Omega_{n-i}(q){\cdot}4$, where $i$ is even and $\e\in\{+, -\}$. It follows from \cite[p.331]{a:Saxl2002},  we have that $v>q^{i(n-i)}/4$ and $k\leq 2a\lambda q^m$, where $n=2m+1$ and $m\geq 3$. Then by Lemma \ref{lem:six}(b), we have that
    \begin{align}\label{eq:eq2-A-ii-1-Oo}
    q^{i(n-i)}<16\lambda a^2q^{n-1}.
    \end{align}
    Since $\lambda$ is an odd prime divisor of $k$, Lemmas \ref{lem:New} and \ref{lem:six} imply that $\lambda$ must divide $a$, $p$, $q^{j_{1}}-\e$ or $q^{2j_{2}}-1$, where $j_{1}\leq \lfloor m/2 \rfloor$ and $j_{2}\leq \lfloor  (n-i-1)/2\rfloor$. Note that $m>i$. Thus $\lambda \leq (q^{(n-i-1)/2}+1)/2$, and so \eqref{eq:eq2-A-ii-1-Oo} implies that $q^{i(n-i)}<16a^2q^{(3n-i-3)/2}$. Therefore,
    \begin{align}\label{eq:A-ii-1-Oo-final}
    q^{n(2i-3)-2i^2+i+3}\leq 256 a^4.
    \end{align}
    As $m>i$, we have that $n>2i$, and so $q^{2i^2-5i+3}\leq 256 a^4$. This inequality holds only for $i=2$, in which case \eqref{eq:A-ii-1-Oo-final} implies that $q^{n-3}=q^{n(2i-3)-2i^2+i+3}\leq 256 a^4$. This inequality holds only for $(n,q)=(7, 3)$, in which case by \cite[Tables 8.39]{b:BHR-Max-Low}, $H\cap X$ is isomorphic to $\Omega_{2}^{-}(3)\times \Omega_{5}(3) \cdot 4$, and so \eqref{eq:v} implies that $v=22113$. By Lemmas \ref{lem:New} and \ref{lem:six}, $k$ is a divisor of $415720$. For these values of $(v, k)$, the fraction $k(k-1)/(v-1)$ is not prime, which is  a contradiction.\smallskip
    
    \noindent \textbf{(2.2)}  Let $X=\POm_{2m}^{\e}(q)$ with $q$ odd, $m\geq 4$ and $\e \in\{-, +\}$. Then $H=N_{i}$ with $i\leq m$. Set $n=2m$.
    
    If $i=1$, then by~\cite[Proposition 4.1.6]{b:KL-90}, $H\cap X$ is isomorphic to $\,^{\hat{}}\Omega_{2m-1}(q){\cdot}4$. Here by \eqref{eq:v}, we have that $v=q^{m-1}(q^{m}-\e1)/2$. Note that $|\Out(X)|$ divides $6a{\cdot}\gcd(4, q^{m}-1)$ and $\lambda$ is an odd prime divisor of $k$. Then by Lemmas \ref{lem:New} and \ref{lem:six}(b), $\lambda$ must divide $a$, $3$, $p$ or $q^{2j}-1$, for some $j\in\{1, 2, \ldots, m-1\}$. Therefore,
    \begin{align}\label{eq:lam-A-ii-1-Oo}
    \lambda \leq (q^{m-1}+1)/2.
    \end{align}
    According to \cite[p.332-333]{a:Saxl2002}, the parameter $k$ divides $\lambda (q^{m-1}+\e1)/2$ if $q\equiv 1 \mod{4}$, or $\lambda (q^{m-1}-\e1)/2$ if $q\equiv 3 \mod{3}$. Thus $k$ divides $\lambda d^{\e}(q)$, where $d^{\e}(q)=(q^{m-1}\pm \e1)/2$. By Lemma \ref{lem:six}(a), $k$ divides $\lambda(v-1)$. Therefore, $k$ must divide $\lambda \gcd(v-1, d^{\e}(q))$. Note that $\gcd(v-1, d^{ \e}(q))<q-1$. Therefore,  $k< \lambda(q-1)$. Then by \eqref{eq:lam-A-ii-1-Oo} and Lemma~\ref{lem:six}(b), we have that $\lambda q^{m-1}(q^{m}-\e1)\leq \lambda v<k^2<\lambda^2(q-1)^2$, and so \eqref{eq:lam-A-ii-1-Oo} implies that $q^{m-1}(q^{m}-\e1)<(q^{m-1}+1)(q-1)^2$, which is impossible.
    
    Therefore, we can assume that $1<i\leq m$. Then by \cite[p.19]{a:ABCD-PrimeRep}, $v>q^{i(2m-i)}/4$ and by \cite[p. 333]{a:Saxl2002}, $k\leq 4a\lambda {\cdot}q^m$. Then Lemma \ref{lem:six}(b) implies that $\lambda v<\lambda q^{i(2m-i)}<4k^2<64\lambda^2 a^2q^{2m}$. Thus
    \begin{align}\label{eq:eq-A-ii-2-Oo-1}
    q^{2m(i-1)-i^2}<64\lambda a^2.
    \end{align}
    Note that $\lambda$ is an odd prime divisor of $k$ and by Lemma \ref{lem:six}(b), $k$ divides $|\Out(X)|{\cdot}|H\cap X|$, where $|\Out(X)|$ divides $6a{\cdot}\gcd(q^{m}-\e1)$.  Here by~\cite[Proposition 4.1.6]{b:KL-90}, $H\cap X$ divides $|\Omega_{i}^{\delta_{1}}(q)\times \Omega_{n-i}^{\delta_{2}}(q){\cdot}4|$, where $\delta_{i}\in\{\circ, -, +\}$ and $i\geq 2$. Then $\lambda \leq \lambda_{i}(q)$, where $n=2m$ and
    \begin{align}\label{eq:lam-A-ii-Oo}
    \lambda_{i}(q)= \left\{
    \begin{array}{ll}
    2^{-1}{\cdot}(q^{(n-i)/2}+1),& \text{ if } (n-i)/2 \text{ is even and } \delta_{2}=-; \\
    2^{-1}{\cdot}(q^{(n-i-1)/2}+1),& \text{ if } i \text{ is odd};\\
    (q^{(n-i)/2}-(-1)^{\delta_{2}})(q-(-1)^{\delta_{2}})^{-1}, & \text{ otherwise}.
    \end{array}
    \right.
    \end{align}
    Thus by \eqref{eq:eq-A-ii-2-Oo-1} and \eqref{eq:lam-A-ii-Oo}, we have that $q^{2m(i-1)-i^2}<32a^2{\cdot}(q^{(n-i)/2}+1)$. Since $q^{(n-i)/2}+1<2q^{(n-i)/2}$, it follows that
    \begin{align}\label{eq:eq-A-ii-Oo-1-1}
    q^{m(2i-3)-i^2}<64a^2.
    \end{align}
    Note that $i\leq m$. Thus $q^{i^2-3i}\leq q^{m(2i-3)-i^2}<64a^2$, and so $q^{i^2-3i}<64a^2$. This inequality holds only for $i=2, 3$. If $i=3$, then by  \eqref{eq:eq-A-ii-2-Oo-1} and \eqref{eq:lam-A-ii-Oo}, we have that $q^{4m-9}<32a^2{\cdot}(q^{m-2}+1)$. Hence $q^{3m-7}<64a^2$. As $m\geq 4$, it follows that $q^{5}\leq q^{3m-7}<64a^2$, and so $q^5<64a^2$, which is impossible. Therefore, $i=2$. We now consider the following two subcases:\smallskip
    
    \begin{table}
        \centering
        \caption{Some large maximal subgroups of finite simple classical groups in Proposition~\ref{prop:orth}.}\label{tbl:class-s}
        \small
        \begin{tabular}{llll}
            \noalign{\smallskip}\hline\noalign{\smallskip}
            $X$ & $H\cap X$ & $v$ & $k$ divides \\
            \noalign{\smallskip}\hline\noalign{\smallskip}
            $\POm_{8}^{-}(3)$ & ${\hat{}}(\Omega_{2}^{-}(3)\times \Omega_{6}^{+}(3)){\cdot}2^2$ & $209223$ & $388177920$ \\
            $\POm_{8}^{-}(5)$ & ${\hat{}}(\Omega_{2}^{-}(5)\times \Omega_{6}^{+}(5)){\cdot}2^2$ & $102703125$ &$1392768000000$ \\
            $\POm_{8}^{-}(7)$ & ${\hat{}}(\Omega_{2}^{-}(7)\times \Omega_{6}^{+}(7)){\cdot}2^2$ & $6075747307$ &$296651671142400$ \\
            $\POm_{8}^{-}(9)$ & ${\hat{}}(\Omega_{2}^{-}(9)\times \Omega_{6}^{+}(9)){\cdot}2^2$ & $127287028233$ & $32486299582464000$ \\
            $\POm_{10}^{-}(3)$ & ${\hat{}}(\Omega_{2}^{-}(3)\times \Omega_{8}^{+}(3)){\cdot}2^2$ & $16409061$ & $158469754060800$ \\
            $\POm_{10}^{+}(3)$ & ${\hat{}}(\Omega_{2}^{+}(3)\times \Omega_{8}^{+}(3)){\cdot}2^2$ & $32549121$ & $158469754060800$ \\
            \noalign{\smallskip}\hline\noalign{\smallskip}
        \end{tabular}
    \end{table}

    \noindent{\textbf{(2.2.1)}} Let $m$ be even. If $\delta_{2}=-$, then by \eqref{eq:eq-A-ii-2-Oo-1} and \eqref{eq:lam-A-ii-Oo}, we have that $q^{2m-4}{\cdot}(q+1)<32a^2{\cdot}(q^{m-1}+1)$. Hence $q^{m-3}{\cdot}(q+1)<64a^2$. This inequality holds only for $(m, q)=(4, 3)$ in which case $v=189540$, which is not odd. If $\delta_{2}=+$, then by \eqref{eq:eq-A-ii-2-Oo-1} and \eqref{eq:lam-A-ii-Oo}, we have that $q^{2m-4}{\cdot}(q-1)<32a^2{\cdot}(q^{m-1}-1)$. Hence $q^{m-3}{\cdot}(q-1)<32a^2$. This inequality holds only for $(m, q)\in\{(4, 3), (4, 5), (4, 7), (4, 9)\}$.   We now apply~\cite[Proposition 4.1.6]{b:KL-90} and obtain $H\cap X$ as listed in Table~\ref{tbl:class-s}, and considering the fact that $v$ is odd, we have that $(m,q,\e)\in\{(4, 3,-), (4, 5,-), (4, 7,-), (4, 9,-)\}$. Moreover, Lemma \ref{lem:six}(b) says that $k$ divides $|\Out(X)|{\cdot}|H\cap X|$, and so we can find the possible values of $k$ as in the fourth column of Table~\ref{tbl:class-s}.  This is a contradiction as for each $k$ and $v$ as in Table~\ref{tbl:class-s}, the fraction $k(k-1)/(v-1)$ is not prime.\smallskip
    
    \noindent{\textbf{(2.2.2)}} Let $m$ be odd. If $\delta_{2}=-$, then by \eqref{eq:eq-A-ii-2-Oo-1} and \eqref{eq:lam-A-ii-Oo}, we have that $q^{2m-4}<32a^2{\cdot}(q^{m-1}+1)$. Hence $q^{m-3}<64a^2$. This inequality holds only for $m=5$ and $q=3,5,7,9$. All these cases can be ruled out as $v$ has to be odd. If $\delta_{2}=+$, then by \eqref{eq:lam-A-ii-Oo} and \eqref{eq:eq-A-ii-2-Oo-1}, we have that $q^{2m-4}{\cdot}(q-1)<32a^2{\cdot}(q^{m-1}-1)$. Hence $q^{m-3}{\cdot}(q-1)<32a^2$. This inequality holds only for $(m, q)=(5, 3)$ for $\e=\pm$. By~\cite[Proposition 4.1.6]{b:KL-90}, we can obtain $H\cap X$ as in Table~\ref{tbl:class-s}, and for each such $H\cap X$, by \eqref{eq:v}, we find $v$ as in the third column of Table~\ref{tbl:class-s}. Note by Lemma \ref{lem:six}(b) that $k$ divides $|\Out(X)|{\cdot}|H\cap X|$, and so we can find the possible values of $k$ as in the fourth column of Table~\ref{tbl:class-s}. All these cases can be ruled out as the fraction $k(k-1)/(v-1)$ is not prime.\smallskip

    \noindent \textbf{(3)} Let $H_{0}$ be the stabilizer of an orthogonal decomposition $V=\oplus V_{j}$ with all $V_{j}$'s isometric, and let $q$ be odd. This case has to be treated separately for both odd and even dimension of $V$.\smallskip
    
    \noindent \textbf{(3.1)} Let $X=\Omega_{2m+1}(q)$ with $q$ odd and $m\geq 3$. In this case $H$ is the stabilizer of a subspace decomposition into
    isometric non-singular spaces of dimension $i$, where $i$ is odd.
    
    Let $i=1$. Then by~\cite[Proposition 4.2.15]{b:KL-90}, the subgroup $H\cap X$ is isomorphic to $2^{2m}{\cdot}\S_{2m+1}$ or $2^{2m}{\cdot}\A_{2m+1}$ if $q\equiv \pm 1 \mod{8}$ or $q\equiv \pm3 \mod{8}$, respectively. The subgroups $H\cap X$ satisfying $|X|<|\Out(X)|^2{\cdot}|H\cap X|^3$ are listed in Table~\ref{tbl:class-s-2-2}, and for each such $H\cap X$, by \eqref{eq:v}, we obtain the parameter $v$ as in the fourth column of Table~\ref{tbl:class-s-2-2}. Moreover, Lemma \ref{lem:six}(b) says that $k$ divides $|\Out(X)|\cdot |H\cap X|$, and so we can find the possible values of $k$ as in the fifth column of Table~\ref{tbl:class-s-2-2}. For each possible case, we observe that $k(k-1)/(v-1)$ is not prime, which is a contradiction.
    \begin{table}
        \small
        \centering
        \caption{Some parameters for Case 3 in Proposition~\ref{prop:orth} }\label{tbl:class-s-2-2}
        \begin{tabular}{cllll}
            \noalign{\smallskip}\hline\noalign{\smallskip}
            Line &
            $X$ &
            $H\cap X$ &
            $v$ &
            $k$ divides
            \\
            \noalign{\smallskip}\hline\noalign{\smallskip}
            $1$ &
            $\Omega_{7}(3)$ &
            $2^{6}{\cdot}\A_{7}$ &
            $28431$ &
            $645120$
            \\
            $2$ &
            $\Omega_{7}(5)$ &
            $2^{6}{\cdot}\A_{7}$ &
            $  1416796875  $   &
            $  645120 $
            
            \\
            $3$ &
            $\Omega_{9}(3)$ &
            $2^{8}{\cdot}\A_{9}$ &
            $  1416290265  $   &
            $  185794560 $
            \\
            $4$ &
            $\Omega_{11}(3)$ &
            $2^{10}{\cdot}\A_{11}$ &
            $  3741072100580529  $   &
            $81749606400$
            \\
            $5$ &
            $\Omega_{13}(3)$ &
            $2^{12}{\cdot}\A_{13}$ &
            $  564416277323644023433155  $   &
            $  51011754393600 $
            \\
            $6$ &
            $\POm_{8}^{+}(3)$ &
            $2^{6}{\cdot}\A_{8}$ &
            $  3838185 $ &
            $  15482880  $
            \\
            $7$ &
            $\POm_{8}^{+}(5)$ &
            $2^{6}{\cdot}\A_{8}$ &
            $  6906884765625 $ &
            $  15482880  $
            \\
            $8$ &
            $\POm_{12}^{+}(3)$ &
            $2^{10}{\cdot}\A_{12}$ &
            $  27575442453379079259 $ &
            $  2942985830400  $
            \\
            $9$ &
            $\POm_{10}^{-}(3)$ &
            $2^{8}{\cdot}\A_{10}$ &
            $  1399578039873 $ &
            $  3715891200  $
            \\
            $6$ &
            $\POm_{14}^{-}(3)$ &
            $2^{12}{\cdot}\A_{14}$ &
            $  32152618284915465959467883895 $ &
            $  1428329123020800  $
            \\
            \noalign{\smallskip}\hline\noalign{\smallskip}
        \end{tabular}
    \end{table}
    
    Therefore, $i\geq 3$, and hence~\cite[Proposition 4.2.14]{b:KL-90} implies that $H\cap X$ is isomorphic to
    \begin{align*}
    (2^{t-1}\times \Omega_{i}(q)^t{\cdot}2^{t-1}){\cdot}\S_{t},
    \end{align*}
    where $it=2m+1$.
    
    Let $i=3$. Then $H\cap X$ is isomorphic to $(2^{t-1}\times \Omega_{3}(q)^t\cdot 2^{t-1})\cdot \S_{t}$, and so  by Lemma \ref{lem:up-lo-b}, we conclude that $q^{m^2}\prod_{j=1}^{m}(q^{2j}-1)<a^2\cdot 2^{3}\cdot 2^{6t-6}\cdot (t!)^3\cdot q^{3t}(q^2-1)^{3t}/2^{-3t}$. Since $a^2<q$ and $q^{m^2}\leq \prod_{j=1}^{m}(q^{2j}-1)$, it follows that $q^{2m^2}<2^{3t-3}\cdot (t!)^3\cdot q^{9t+1}$. Thus $q^{2m^2-9t-1}<2^{3t-3}\cdot (t!)^3$. Since $2m+1=3t$, we conclude that
    \begin{align}\label{eq:A-iii-eq-O-1}
    q^{9t^2-24t-1}<2^{6t-6}\cdot (t!)^6.
    \end{align}
    If $t=3$, then $q^{8}<2^{18}\cdot 3^6$. This inequality holds for $q\in\{3,5,7,9\}$, and so in each case, we easily observe by \eqref{eq:v} that $v$ is even, which is a contradiction. Thus $t\geq 5$. Since by Lemma \ref{lem:factor-bound}(a) we have that $t!<5^{(t^{2}-3t+1)/3}$, it follows from  \eqref{eq:A-iii-eq-O-1} that  $q^{9t^2-24t-1}<2^{6t-6}\cdot (t!)^6<2^{6t-6}\cdot 5^{2t^2-6t+2}$. Thus $q^{9t^2-24t-1}<2^{6t-6}\cdot 5^{2t^2-6t+2}$. Since $2^{6t-6}\cdot 5^{2t^2-6t+2}<5^{2t^2}$, it follows that $q^{9t^2-24t-1}<5^{2t^2}$. Then $\log_{p}q \cdot (9t^2-24t-1)<\log_{p}5 \cdot (2t^2)<3t^2$, and so $9t^2-24t-1<3t^2$. Thus, $6t^2-24t-1<0$, this inequality does not hold for any $t\geq 5$, which is a contradiction.
    
    Let $i\geq 5$. Then by Corollary \ref{cor:large} and Lemma \ref{lem:up-lo-b}, we have that $q^{m^2}\prod_{j=1}^{m}(q^{2j}-1)<a^2{\cdot}2^{3}{\cdot}2^{6t-6}\cdot (t!)^3{\cdot}q^{3it(i-1)/2}$. Since $a^2<q$ and $q^{m^2}\leq \prod_{j=1}^{m}(q^{2j}-1)$, it follows that $q^{2m^2}<2^{6t-3}\cdot (t!)^3{\cdot}q^{[3it(i-1)+2]/2}$. Thus $q^{2m^2-[3it(i-1)+2]/2}<2^{6t-3}\cdot (t!)^3$. Since $2m+1=it$, we conclude that
    \begin{align}\label{eq:A-iii-eq-O-2}
    q^{(it-1)^2-3it(i-1)-2}<2^{12t-6}{\cdot}(t!)^6.
    \end{align}
    If $t=3$, then $q^{3i-1}<2^{36}{\cdot}3^6$. Since $2^{36}{\cdot}3^6<3^{29}$, it follows that $q^{3i-1}<3^{29}$. This inequality holds only for $i\in\{5, 7, 9\}$. If $i=5$, then by \eqref{eq:A-iii-eq-O-2}, we conclude that
    $q^{14}<2^{36}{\cdot}3^6$. This inequality holds only for $q\in\{3,5,7,9\}$. Then by \eqref{eq:v}, we easily observe that $v$ is not odd, which is a contradiction. By the same manner, we can rule out the remaining case where $i=7, 9$.  Therefore $t\geq 5$, and hence by Lemma \ref{lem:factor-bound}(a), we have that $t!<5^{(t^{2}-3t+1)/3}$, and so \eqref{eq:A-iii-eq-O-2} implies that  $q^{(it-1)^2-3it(i-1)-2}<2^{12t-6}{\cdot}(t!)^6<2^{12t-6}{\cdot}5^{2t^2-6t+2}$. Since $2^{12t-6}{\cdot}5^{2t^2-6t+2}<5^{2t^2}$, it follows that $q^{(it-1)^2-3it(i-1)-2}<5^{2t^2-1}$. Then $[i^2t(t-3)+it-1]\cdot \log_{p}q<(2t^2-1)\cdot \log_{p} 5<3t^2$, and so
    \begin{align}\label{eq:A-ii-2-O-2-4.2.1}
    i^2t(t-3)+it-1<3t^2.
    \end{align}
    Note that $i\geq 5$. Then \eqref{eq:A-ii-2-O-2-4.2.1} implies that $25t^2-20t-1\leq i^2t(t-3)-5it-1<3t^2$, and so $23t^2-20t-1<0$, which is impossible.\smallskip
    
    \noindent \textbf{(3.2)} Let $X=\POm_{2m}^{\e}(q)$ with $q$ odd, $m\geq 4$ and $\e \in\{-, +\}$. In this case, $H$ is an imprimitive subgroup of $G$ stabilizing a
    decomposition $V=V_{1} \oplus \cdots \oplus V_{t}$ with the dimension of each $V_j$'s equal to $i$, so $2m=it$.\smallskip
    
    \noindent \textbf{(3.2.1)} Let $i=1$. Then by Corollary \ref{cor:large} and~\cite[Proposition 4.2.15]{b:KL-90}, we can obtain $H\cap X$ as listed in Table~\ref{tbl:class-s-2-2}. For each such $H\cap X$, by \eqref{eq:v}, we can obtain $v$ as in the third column of Table \ref{tbl:class-s-2-2}. Moreover, Lemma \ref{lem:six}(b) says that $k$ divides $|\Out(X)|\cdot |H\cap X|$, and so we can find the possible values of $k$ as in the fourth column of Table~\ref{tbl:class-s-2-2}.  This is a contradiction as for each $k$ and $v$ as in Table \ref{tbl:class-s-2-2}, the fraction $k(k-1)/(v-1)$ is not prime. Hence $i\geq 2$.\smallskip
    
    \noindent \textbf{(3.2.2)} Let $i$ be odd. Then by \cite[Proposition 4.2.14]{b:KL-90}, $H\cap X$ is isomorphic to $(2^{t-2}\times \Omega_{i}(q)^t{.}2^{t-1}){.}\S_{t}$ with $t$ even and $\e=(-1)^{m(q-1)/2}$.
    
    If $i=3$, then $t\geq 4$ as $3t=it=2m\geq 8$.  It follows from Corollary \ref{cor:large} and Lemma \ref{lem:up-lo-b} that $q^{m(2m-1)}<|\Out(X)|^2\cdot 2^{6t-6}\cdot (t!)^3\cdot q^{3t}(q^2-1)^{3t}/2^{-3t}$. Since $|\Out(X)|$ divides $24a$ and $a^2<q$, we conclude that $q^{2m^2-m}<2^{3t}\cdot 3^2\cdot (t!)^3 q^{9t+1}$. Thus $q^{2m^2-m-9t-4}<2^{3t} \cdot (t!)^3$. Since $2m=3t$, we have that
    \begin{align}\label{eq:A-ii-O+-5.1.2.1}
    q^{9t^2-21t-8}<2^{6t}\cdot (t!)^6.
    \end{align}
    Note by Lemma \ref{lem:factor-bound}(b) that $t!<2^{4t(t-3)/3}$. Thus \eqref{eq:A-ii-O+-5.1.2.1} implies that $q^{9t^2-21t-8}<2^{6t}\cdot (t!)^6<2^{6t}\cdot 2^{8t^2-24t}$, and so $q^{9t^2-21t-8}<2^{8t^2-18t}$. Then $(9t^2-21t-8)\cdot \log_{p}q<(8t^2-18t)\cdot \log_{p}2<(8t^2-18t)\times 0.7$, and so $90t^2-210t-80<56t^2-126t$. Therefore, $34t^2-84t-80<0$, this inequality does not hold for any $t\geq 4$, which is a contradiction.
    
    Therefore, $i\geq 5$. If $t=2$, then $m=i$ as $2m=it$. Let $u$ be a positive integer such that $i=2u+1$. Then by \eqref{eq:v}, we have that
    \begin{align*}
    v=\frac{q^{3u^2+2u}(q^{2u+1}-\e1)(q^{4u}-1)(q^{4u-2}-1)\cdots(q^2-1)}{2(q^{2u}-1)^2(q^{2u-2}-1)^2\cdots(q^2-1)^2},
    \end{align*}
    which is even, and this is a contradiction. If $t\geq 4$, then by Corollary \ref{cor:large} and Lemma \ref{lem:up-lo-b}, we have that $q^{m(2m-1)}<|\Out(X)|^2\cdot 2^{6t-6}\cdot  (t!)^3\cdot q^{3it(i-1)/2}$. Thus, $q^{4m^2-2m}<|\Out(X)|^4\cdot 2^{12t-12}\cdot  (t!)^6\cdot q^{3it(i-1)}$. Note that $|\Out(X)|$ divides $24a$ and $a^2<q$. Thus, $q^{4m^2-2m}<2^{12t}\cdot 3^{4}\cdot (t!)^6q^{3it(i-1)+2}$. Since $2m=it$, we conclude that
    \begin{align}\label{eq:A-iii-eq-O+-5.1.2.2}
    q^{i^2t^2-it-3it(i-1)-6}<2^{12t}\cdot (t!)^6.
    \end{align}
    Since $t!<2^{4t(t-3)/3}$ for $t\geq 4$ by Lemma \ref{lem:factor-bound}(b), we conclude that $q^{i^2t^2-it-3it(i-1)-6}<2^{12t}\cdot (t!)^6<2^{12t}\cdot 2^{8t^2-24t}$, and so $q^{i^2t^2-it-3it(i-1)-6}<2^{8t^2-12t}$. Then $(i^2t^2-it-3it(i-1)-6)\cdot \log_{p}q <(8t^2-12t)\cdot \log_{p}2<(8t^2-12t)\times 0.7$, and so $10i^2t(t-3)+20it-60<56t^2-84t$. Since $i\geq 5$, it follows that $250t^2-650t-60\leq 10i^2t(t-3)+20it-60<56t^2-84t$, Thus, $250t^2-650t-60<56t^2-84t$, and so $194t^2-566t-60<0$, this inequality does not hold for any $t\geq 4$, which is a contradiction.\smallskip
    
    \noindent \textbf{(3.2.3)} Let $i$ be even. Then by~\cite[Proposition 4.2.11]{b:KL-90}, the $H\cap X$ is isomorphic to
    \begin{align*}
    d^{-1}\Omega_{i}^{\e_{1}}(q)^t{.}2^{2(t-1)}{.}\S_{t},
    \end{align*}
    where $\e=\e_{1}^{t}$ and $d\in\{1, 2, 4\}$.

    If $t=2$, then $m=i$, as $2m=it$. Let $u$ be a positive integer such that $i=2u$. Then by \eqref{eq:v}, we have that
    \begin{align*}
    v=\frac{q^{2u^2}(q^{u}+\e_{1}1)\cdot (q^{4u-2}-1)(q^{4u-4}-1)\cdots(q^2-1)}{2\cdot (q^{2u-2}-1)^2(q^{2u-4}-1)^2\cdots(q^2-1)^2}.
    \end{align*}
    This contradicts the fact that $v$ is odd. Therefore, $t\geq 3$.
    \begin{table}
        \small
        \caption{Some parameters for Case 3.2.3 in Proposition~\ref{prop:orth} }\label{tbl:class-s-2-3}
        \begin{tabular}{cllll}
            \noalign{\smallskip}\hline\noalign{\smallskip}
            Line &
            $X$ &
            $H\cap X$ &
            $v$ &
            $k$ divides
            \\
            \noalign{\smallskip}\hline\noalign{\smallskip}
            $1$ &
            $\POm_{12}^{+}(3)$ &
            $\,{\hat{}}\Omega_{4}^{+}(3)^3{.}2^2{.}\Sym_{3}$ &
            $5898080746972747508175$ &
            $18345885696$
            \\
            $2$ &
            $\POm_{12}^{+}(5)$ &
            $\,{\hat{}}\Omega_{4}^{+}(5)^3{.}2^2{.}\Sym_{3}$ &
            $181234396436428964138031005859375$   &
            $286654464000000$
            \\
            \noalign{\smallskip}\hline\noalign{\smallskip}
        \end{tabular}
    \end{table}
    
    If $i=2$, then $m=t$, and so by \eqref{eq:v} and Lemma \ref{lem:up-lo-b}, we have that $v>q^{2t^2-t}/[2^{t-2}(t!)\cdot (q+1)^{t}]$. By \cite[p. 333]{a:Saxl2002}, the parameter $k$ is at most $2^5\cdot 3\cdot \lambda a\cdot t(t-1)(q+1)^2$, and so by Lemma \ref{lem:six}(b), we conclude that $\lambda q^{2t^2-t}/[2^{t-2}(t!)\cdot (q+1)^{t}]<\lambda v< k^2\leq 2^{10}3^2\lambda^2 a^2\cdot t^2(t-1)^2(q+1)^4$. Since $a^2<q$, $2^{10}3^2<2^{14}$ and $t^2(t-1)^2<t^4$, it follows that
    \begin{align}\label{eq:lam-A-iii-O+--2}
    q^{2t^2-t-1}<2^{t+12}\lambda t^4(t!)(q+1)^{t+4}.
    \end{align}
    Note that $\lambda$ is a prime divisor of $k$. Thus by Lemma \ref{lem:six}(b), $\lambda$ must divide $a$, $t$ or $(q+\e1)/2$. Therefore $\lambda \leq \max\{a, t, (q+1)/2\}<t(q+1)/2$, and so by \eqref{eq:lam-A-iii-O+--2}, we have that
    \begin{align}\label{eq:lam-A-iii-O+---2-1}
    q^{2t^2-t-1}<2^{t+11}t^5(t!)(q+1)^{t+5}.
    \end{align}
    As $q+1<2q$ and $t^5\leq 2^{3t}$, we conclude that $q^{2t^2-2t-6}<2^{5t+15}(t!)$. Note that $t=m\geq 4$. Then by Lemma \ref{lem:factor-bound}(b), we have that $q^{2t^2-2t-6}<2^{5t+15}(t!)<2^{5t+15}2^{4t(4-3)/3}$. Thus $q^{6t^2-6t-18}<2^{4t^2+3t+45}$, and so $(6t^2-6t-18)\cdot \log_{p}q<(4t^2+3t+45)\cdot \log_{p}2\leq (4t^2+3t+45)\times 0.7$. Hence $60t^2-60t-180<28t^2+21t+300$, and so $32t^2-81t-480<0$, then $t=4, 5$. If $t=5$, then \eqref{eq:lam-A-iii-O+---2-1} implies that $q^{44}<2^{16}\cdot 5^5(5!)\cdot (q+1)^{10}<2^{26}\cdot 5^5(5!)q^{10}$, and so $q^{34}<2^{26}\cdot 5^5\cdot (5!)$, which is impossible. If $t=4$, then by the same manner, we must have $q^{18}<2^{37}\cdot 3$, which is valid for $q=3$. Since $\lambda$ divides $a=1$, $t=4$ or $(q+\e1)/2=(3+\e1)/2$, we conclude that $\lambda=2$, which is a contradiction.
    
    If $i=4$, then $m=2t$, and so by Corollary \ref{cor:large} and Lemma \ref{lem:up-lo-b}, we conclude that $q^{m(2m-1)}<|\Out(X)|^2\cdot 2^{6t-3}\cdot  (t!)^3\cdot q^{6t}(q^4-1)^{3t}$. Thus, $q^{m(2m-1)}<|\Out(X)|^2\cdot 2^{6t-3}\cdot  (t!)^3\cdot q^{18t}$. Note that $|\Out(X)|$ divides $24a$, $a^2<q$ and $m=2t$. Thus, $q^{8t^2-2t}=q^{m(2m-1)}<2^{6t+3}3^{2}(t!)^3q^{18t+1}$, and so
    \begin{align}\label{eq:A-iii-eq-O+-5.1.2.3}
    q^{8t^2-20t-3}<2^{6t-3}\cdot  (t!)^3.
    \end{align}
    If $t=3$, then \eqref{eq:A-iii-eq-O+-5.1.2.3} yields $q^{9}<2^{18}\cdot 3^3$, and so $q=3, 5$. In each of these cases, $H\cap X$ and $v$ are recorded as in Table~\ref{tbl:class-s-2-3}. By Lemma \ref{lem:six}(b), the parameter $k$ divides $|\Out(X)|\cdot |H\cap X|$ as in the fifth column of Table~\ref{tbl:class-s-2-3}. It is easy to check for each possible parameters $v$ and $k$ that the fraction $k(k-1)/(v-1)$ is not prime, which is a contradiction.
    If $t\geq 4$, then by Lemma \ref{lem:factor-bound}(b), we have that $t!<2^{4t(t-3)/3}$, and so \eqref{eq:A-iii-eq-O+-5.1.2.3} implies that $q^{8t^2-20t-3}<2^{6t-3}\cdot  (t!)^3<2^{4t^2-6t-3}$. Thus $q^{8t^2-20t-3}<2^{4t^2-6t-3}$, and so $(8t^2-20t-3)\cdot \log_{p}q<(4t^2-6t-3)\cdot \log_{p}2\leq (4t^2-6t-3)\times 0.7$. Hence $80t^2-200t-30<28t^2-42t-21$, and so $52t^2-158t-9<0$, which has no solution for $t\geq 4$, which is a contradiction.
    
    If $i\geq 6$, then Corollary \ref{cor:large} and Lemma \ref{lem:up-lo-b} imply that $q^{m(2m-1)}<|\Out(X)|^2\cdot 2^{6t-3}\cdot  (t!)^3\cdot q^{3it(i-1)/2}$. Thus, $q^{4m^2-2m}<|\Out(X)|^4\cdot 2^{12t-6}\cdot  (t!)^6\cdot q^{3it(i-1)}$. Note that $|\Out(X)|$ divides $24a$ and $a^2<q$. Thus, $q^{4m^2-2m}<2^{12t+6}3^{4}(t!)^6q^{3it(i-1)+2}$. Since $2m=it$, we conclude that
    \begin{align}\label{eq:A-iii-eq-O+-5.1.2.2-1}
    q^{i^2t(t-3)+2it-6}<2^{12t+6}\cdot (t!)^6.
    \end{align}
    If $t=3$, then \eqref{eq:A-iii-eq-O+-5.1.2.2-1} yields $q^{6i-6}<2^{48}\cdot 3^6$. As $q$ is odd and $i\geq 6$, it follows that $q^{6i-6}<2^{48}\cdot 3^6$. This inequality holds only for $(i,q)=(6,3)$ in which case by \eqref{eq:v}, we easily observe that $v$ is even, which is a contradiction. If $t\geq 4$, then $t!<2^{4t(t-3)/3}$ by  Lemma \ref{lem:factor-bound}(b). Thus by \eqref{eq:A-iii-eq-O+-5.1.2.2-1}, we conclude that $q^{i^2t(t-3)+2it-6}<2^{12t+6}(t!)^6<2^{12t+6}2^{8t^2-24t}$, and so $q^{i^2t(t-3)+2it-6}<2^{8t^2-12t+6}$. Then $[i^2t(t-3)+2it-6]\times \log_{p}q<(8t^2-12t+6)\times \log_{p}2\leq (8t^2-12t+6)\times  0.7$. Hence $10i^2t(t-3)+20it-60<56t^2-84t+42$. Since $i\geq 6$, it follows that $360t^2-960t-60\leq 10i^2t(t-3)+20it-60<56t^2-84t+42$ and so $304t^2-876t-102<0$, which is impossible for  $t\geq 4$.\smallskip

    \noindent \textbf{(4)-(5)} In these cases, the pairs $(X,H_{0})$ are recorded in Table~\ref{tbl:A4-i-4}, and for each case, by \eqref{eq:v}, we obtain the parameter $v$ as in the fifth column of the same table. Moreover, for each pairs $(X,H\cap X)$, by Lemmas \ref{lem:New} and \ref{lem:six}(b), the parameter $k$  divides the number listed in the sixth column of Table~\ref{tbl:A4-i-4}. We now apply Lemma~\ref{lem:six}(b), and so $v<k^{2}$. For each row, this inequality is true only for $q$ given in the last column of Table~\ref{tbl:A4-i-4}. It is easy to check that for each appropriate pairs $(v,k)$, the fraction $k(k-1)/(v-1)$ is not a prime number.\smallskip
    \begin{table}
        \small
        \centering
        \caption{Some parameters for Cases 4 and 5 in Proposition~\ref{prop:orth}}\label{tbl:A4-i-4}
        \begin{tabular}{cllllll}
            \noalign{\smallskip}\hline\noalign{\smallskip}
            Line &
            $X$ &
            $H_{0}$ &
            Conditions &
            $v$ &
            $k$ divides &
            $q$
            \\
            \noalign{\smallskip}\hline\noalign{\smallskip}
            $1$ &
            $\Omega_{7}(q)$ &
            $\SO_{7}(2)$ &
            $q=p \equiv \pm 3 \mod{8}$ &
            $\frac{q^{9}(q^{6}-1)(q^4-1)(q^2-1)}{2^{10}{\cdot}3^4{\cdot}5{\cdot}7}$ &
            $2^{10}{\cdot}3^4{\cdot}5{\cdot}7$ &
            $3$, $5$
            \\
            $2$ &
            $\POm_{8}^{+}(q)$ &
            $\Omega_{8}^{+}(2)$ &
            $q=p \equiv \pm 3 \mod{8}$ &
            $\frac{q^{12}(q^{6}-1)(q^4-1)^2(q^2-1)}{2^{14}{\cdot}3^5{\cdot}5^2{\cdot}7}$ &
            $2^{15}{\cdot}3^5{\cdot}5^2{\cdot}7$&
            $3$, $5$
            \\
            $3$ &
            $\Omega_{8}^{+}(q)$ &
            $2^3\cdot 2^6\cdot \PSL_{3}(2)$ &
            $q=p \equiv \pm 3 \mod{8}$&
            $\frac{q^{12}(q^{6}-1)(q^4-1)^2(q^2-1)}{2^{14}{\cdot}3{\cdot}7}$&
            $2^{15}{\cdot}3^{2}{\cdot}7$&
            $3$
            \\
            \noalign{\smallskip}\hline\noalign{\smallskip}
        \end{tabular}
    \end{table}
    
    \noindent \textbf{(6)} Let $H = N_{G}(X(q_{0}))$ with $q=q_{0}^t$ odd and $t$ odd prime. Here, as noted before, we only need consider the case where $X=\POm_{2m}^{\e}(q)$ with $q$ odd, $n=2m$ and $\e=\pm$. By~\cite[Proposition 4.5.10]{b:KL-90}, the subgroup $H_{0}$ is isomorphic to $\POm_{2m}^{\e}(q_{0})$, where $m\geq 4$. Note that $|\Out(X)|$ divides $6a$. Then by Lemma~\ref{lem:up-lo-b} and the inequality $|X|<|\Out(X)|^2{\cdot}|H\cap X|^3$, we have that $q_{0}^{tm(2m-1)}<2^5{\cdot}3^2{\cdot}a^2{\cdot}q_{0}^{3m(2m-1)}{\cdot}(1+q_{0}^{-m})^3$. Since $a^2<2q$ and $1+q_{0}^{-m}<2$, it follows that $q_{0}^{(2m^2-m)(t-3)-t}<2^{9}{\cdot} 3^{2}$. If $t\geq 5$, then $q_{0}^{4m^2-2m-5}\leq q_{0}^{(2m^2-m)(t-3)-t}<2^{9}{\cdot} 3^{2}$, which is impossible. Hence $t=3$ in which case by \eqref{eq:v} and Lemma \ref{lem:up-lo-b}, we have that $v>q_{0}^{4m^2-2m-4}$. Since $k$ divides $\lambda (v-1,|H|)$ and $v-1$ is coprime to $q_{0}$, the parameter $k$ must divide $6a\lambda{\cdot}|H\cap X|_{p'}$. Since $|H\cap X|_{p'}< q_{0}^{2m(m-1)}(q_{0}^{m}+1)^2$, Lemma~\ref{lem:six}(b) implies that $\lambda q_{0}^{4m^2-2m-4}< \lambda v <k^{2}\leq 36a^2\lambda^2q_{0}^{2m(m-1)}(q_{0}^{m}+1)^2$. Therefore,
    \begin{align}\label{eq:k-A-i-Oe}
    q_{0}^{2m^2-4}<36a^2\lambda {\cdot}(q_{0}^{m}+1)^2.
    \end{align}
    Note that $\lambda$ is an odd prime divisor of $k$. Thus Lemmas \ref{lem:New} and \ref{lem:six}(b) imply that $\lambda$ divides $3$, $a$, $p$, $q_{0}^{m}-\e1$ or $q_{0}^{2j}-1$, for some $j\in \{1, \ldots, m-1\}$, and so $\lambda \leq q_{0}^m+1$. Then by inequality \eqref{eq:k-A-i-Oe}, we have that $q_{0}^{2m^2-4}<36a^2{\cdot}(q_{0}^m+1)^3$. Since $q_{0}^m+1<2q_{0}$,  $q_{0}^{2m^2-3m-4}<2^5{\cdot}3^2a^2$. As $a=3s$, $m\geq 4$ and $q_{0}$ is odd, $3^{16s}\leq q_{0}^{2m^2-3m-4}<2^5{\cdot}3^3s^2$, and so $3^{16s}<2^5{\cdot}3^3s^2$, which is impossible.
\end{proof}

\noindent{\textbf{Proof of Theorem~\ref{thm:main}}}
Suppose that $\Dmc$ is a nontrivial symmetric $(v, k, \lambda)$ design admitting a flag-transitive and point-primitive automorphism group $G$ with socle $X$ a finite simple group of Lie type. Suppose also that $\lambda$ is prime. The symmetric designs with $\lambda=2,3$ admitting flag-transitive transitive automorphism groups are classified in \cite{a:Zhou-lam3-affine,t:Regueiro,a:Regueiro-classification}, and so by a quick check, we observe that the pairs $(\Dmc, G)$ are as in Table \ref{tbl:main}. Therefore, we can assume that $\lambda\geq 5$. Since $k(k-1)=\lambda(v-1)$, it follows that $\lambda$ is coprime to $k$ or $\lambda$ divides $k$. In the former case, by \cite[Corollary~1.2]{a:ADM-AS-CP}, we conclude that $\Dmc$ is a projective space $\PG_{n}(q)$ or $\Dmc$ is the unique Hadamard design with parameters $(11,5,3)$ which has been already recorded in Table \ref{tbl:main}. We now consider the latter case where $\lambda$ divides $k$. We first observe by \cite[Corollary~1.2]{a:ABD-Exp} that the socle $X$ cannot be a finite simple exceptional group. Let now $X$ be a finite simple classical groups. Since $G$ is point-primitive, the point-stabiliser $H=G_\alpha$ is maximal in $G$, and considering the fact that $k(k-1)=\lambda(v-1)$, we conclude that $v$ is odd, and our main result then follows from Propositions~\ref{prop:psl}-\ref{prop:orth}.\smallskip

\noindent{\textbf{Proof of Corollary~\ref{thm:main}}}
Suppose that $\Dmc$ is a nontrivial symmetric $(v, k, \lambda)$ design with $\lambda$ prime admitting a flag-transitive and point-imprimitive automorphism group $G$.  Suppose also that $(c,d,l)$ is as in the statement of Corollary~\ref{thm:main}. If $(v,k,\lambda)$ is not one of the possibilities mentioned in Corollary~\ref{thm:main}, then \cite[Theorem~1.1]{a:Praeger-imprimitive} implies that $k=\lambda^2/2$, and since $\lambda$ is prime, we conclude that $\lambda=2$, and hence $k=4/2=2=\lambda$, which is a contradiction.

\subsection*{Acknowledgements}

The authors are grateful to the anonymous referees for helpful and constructive comments.




\begin{thebibliography}{10}
	\providecommand{\url}[1]{{#1}}
	\providecommand{\urlprefix}{URL }
	\expandafter\ifx\csname urlstyle\endcsname\relax
	\providecommand{\doi}[1]{DOI~\discretionary{}{}{}#1}\else
	\providecommand{\doi}{DOI~\discretionary{}{}{}\begingroup
		\urlstyle{rm}\Url}\fi
	
	\bibitem{a:ABD-PSL3}
	Alavi, S.H., Bayat, M.: Flag-transitive point-primitive symmetric designs and
	three dimensional projective special linear groups.
	\newblock Bulletin of Iranian Mathematical Society (BIMS) \textbf{42}(1),
	201--221 (2016)
	
	\bibitem{a:ABCD-PrimeRep}
	Alavi, S.H., Bayat, M., Choulaki, J., Daneshkhah, A.: Flag-transitive block
	designs with prime replication number and almost simple groups.
	\newblock Designs, Codes and Cryptography \textbf{88}(5), 971--992 (2020).
	\newblock \doi{10.1007/s10623-020-00724-z}.
	\newblock \urlprefix\url{https://doi.org/10.1007/s10623-020-00724-z}
	
	\bibitem{a:ABD-Exp}
	Alavi, S.H., Bayat, M., Daneshkhah, A.: Finite exceptional groups of lie type
	and symmetric designs.
	\newblock Submitted \urlprefix\url{https://arxiv.org/abs/1702.01257}
	
	\bibitem{a:ABD-PSL2}
	Alavi, S.H., Bayat, M., Daneshkhah, A.: Symmetric designs admitting
	flag-transitive and point-primitive automorphism groups associated to two
	dimensional projective special groups.
	\newblock Designs, Codes and Cryptography \textbf{79}(2), 337--351 (2016).
	\newblock \doi{10.1007/s10623-015-0055-9}.
	\newblock \urlprefix\url{http://dx.doi.org/10.1007/s10623-015-0055-9}
	
	\bibitem{a:ABD-PSL4}
	Alavi, S.H., Bayat, M., Daneshkhah, A.: Symmetric designs and projective
	special linear groups of dimension at most four.
	\newblock Journal of Combinatorial Designs \textbf{28}(9), 688--709 (2020).
	\newblock \doi{10.1002/jcd.21738}.
	\newblock \urlprefix\url{https://doi.org/10.1002/jcd.21738}
	
	\bibitem{a:ABD-Un-CP}
	Alavi, S.H., Bayat, M., Daneshkhah, A.: Flag-transitive block designs and
	unitary groups.
	\newblock Monatshefte f\"{u}r Mathematik  (Published online: April 2020).
	\newblock \doi{10.1007/s00605-020-01421-8}.
	\newblock \urlprefix\url{https://doi.org/10.1007/s00605-020-01421-8}
	
	\bibitem{a:ABDZ-PSU4}
	Alavi, S.H., Bayat, M., Daneshkhah, A., Zang~Zarin, S.: Symmetric designs and
	four dimensional projective special unitary groups.
	\newblock Discrete Mathematics \textbf{342}(4), 1159 -- 1169 (2019).
	\newblock \doi{https://doi.org/10.1016/j.disc.2018.12.010}.
	\newblock
	\urlprefix\url{http://www.sciencedirect.com/science/article/pii/S0012365X1830431X}
	
	\bibitem{a:AB-Large-15}
	Alavi, S.H., Burness, T.C.: Large subgroups of simple groups.
	\newblock J. Algebra \textbf{421}, 187--233 (2015).
	\newblock \doi{10.1016/j.jalgebra.2014.08.026}.
	\newblock \urlprefix\url{http://dx.doi.org/10.1016/j.jalgebra.2014.08.026}
	
	\bibitem{a:ADM-AS-CP}
	Alavi, S.H., Daneshkhah, A., Mouseli, F.: A classification of flag-transitive
	block designs.
	\newblock Submitted  (2019).
	\newblock \urlprefix\url{https://arxiv.org/abs/1911.06175}
	
	\bibitem{b:Beth-I}
	Beth, T., Jungnickel, D., Lenz, H.: Design theory. {V}ol. {I},
	\emph{Encyclopedia of Mathematics and its Applications}, vol.~69, second edn.
	\newblock Cambridge University Press, Cambridge (1999).
	\newblock \doi{10.1017/CBO9780511549533}.
	\newblock \urlprefix\url{https://doi.org/10.1017/CBO9780511549533}
	
	\bibitem{a:Braic-2500-nopower}
	Brai{\'c}, S., Golemac, A., Mandi{\'c}, J., Vu{\v{c}}i{\v{c}}i{\'c}, T.:
	Primitive symmetric designs with up to 2500 points.
	\newblock J. Combin. Des. \textbf{19}(6), 463--474 (2011).
	\newblock \doi{10.1002/jcd.20291}.
	\newblock \urlprefix\url{http://dx.doi.org/10.1002/jcd.20291}
	
	\bibitem{b:BHR-Max-Low}
	Bray, J.N., Holt, D.F., Roney-Dougal, C.M.: The maximal subgroups of the
	low-dimensional finite classical groups, \emph{London Mathematical Society
		Lecture Note Series}, vol. 407.
	\newblock Cambridge University Press, Cambridge (2013).
	\newblock \doi{10.1017/CBO9781139192576}.
	\newblock \urlprefix\url{http://dx.doi.org/10.1017/CBO9781139192576}.
	\newblock With a foreword by Martin Liebeck
	
	\bibitem{b:Handbook}
	Colbourn, C.J., Dinitz, J.H. (eds.): Handbook of combinatorial designs, second
	edn.
	\newblock Discrete Mathematics and its Applications (Boca Raton). Chapman \&
	Hall/CRC, Boca Raton, FL (2007)
	
	\bibitem{b:Atlas}
	Conway, J.H., Curtis, R.T., Norton, S.P., Parker, R.A., Wilson, R.A.: Atlas of
	finite groups.
	\newblock Oxford University Press, Eynsham (1985).
	\newblock Maximal subgroups and ordinary characters for simple groups, With
	computational assistance from J. G. Thackray
	
	\bibitem{a:DZ-PSU3}
	Daneshkhah, A., Zang~Zarin, S.: Flag-transitive point-primitive symmetric
	designs and three dimensional projective special unitary groups.
	\newblock Bull. Korean Math. Soc. \textbf{54}(6), 2029--2041 (2017)
	
	\bibitem{a:Dempwolff2001}
	Dempwolff, U.: Primitive rank 3 groups on symmetric designs.
	\newblock Designs, Codes and Cryptography \textbf{22}(2), 191--207 (2001).
	\newblock \doi{10.1023/A:1008373207617}.
	\newblock \urlprefix\url{https://doi.org/10.1023/A:1008373207617}
	
	\bibitem{b:Dixon}
	Dixon, J.D., Mortimer, B.: Permutation groups, \emph{Graduate Texts in
		Mathematics}, vol. 163.
	\newblock Springer-Verlag, New York (1996).
	\newblock \doi{10.1007/978-1-4612-0731-3}.
	\newblock \urlprefix\url{http://dx.doi.org/10.1007/978-1-4612-0731-3}
	
	\bibitem{a:Zhou-lam3-affine}
	Dong, H., Zhou, S.: Affine groups and flag-transitive triplanes.
	\newblock Sci. China Math. \textbf{55}(12), 2557--2578 (2012).
	\newblock \doi{10.1007/s11425-012-4476-x}.
	\newblock \urlprefix\url{http://dx.doi.org/10.1007/s11425-012-4476-x}
	
	\bibitem{GAP4}
	The GAP~Group: {The GAP~Group -- Groups, Algorithms, and Programming, Version
		4.10.2 } (2019).
	\newblock \urlprefix\url{http://www.gap-system.org}
	
	\bibitem{a:Kantor-85-2-trans}
	Kantor, W.M.: Classification of {$2$}-transitive symmetric designs.
	\newblock Graphs Combin. \textbf{1}(2), 165--166 (1985).
	\newblock \doi{10.1007/BF02582940}.
	\newblock \urlprefix\url{http://dx.doi.org/10.1007/BF02582940}
	
	\bibitem{a:Kantor-87-Odd}
	Kantor, W.M.: Primitive permutation groups of odd degree, and an application to
	finite projective planes.
	\newblock J. Algebra \textbf{106}(1), 15--45 (1987).
	\newblock \doi{10.1016/0021-8693(87)90019-6}.
	\newblock \urlprefix\url{http://dx.doi.org/10.1016/0021-8693(87)90019-6}
	
	\bibitem{a:Kantor-rank3}
	Kantor, W.M., Liebler, R.A.: The rank {$3$} permutation representations of the
	finite classical groups.
	\newblock Trans. Amer. Math. Soc. \textbf{271}(1), 1--71 (1982).
	\newblock \doi{10.2307/1998750}.
	\newblock \urlprefix\url{http://dx.doi.org/10.2307/1998750}
	
	\bibitem{b:KL-90}
	Kleidman, P., Liebeck, M.: The subgroup structure of the finite classical
	groups, \emph{London Mathematical Society Lecture Note Series}, vol. 129.
	\newblock Cambridge University Press, Cambridge (1990).
	\newblock \doi{10.1017/CBO9780511629235}.
	\newblock \urlprefix\url{http://dx.doi.org/10.1017/CBO9780511629235}
	
	\bibitem{t:Kleidman}
	Kleidman, P.B.: The subgroup structure of some finite simple groups.
	\newblock Ph.D. thesis, University of Cambridge (1987)
	
	\bibitem{a:Korableva-LnUn}
	Korableva, V.V.: Primitive parabolic permutation representations of finite
	special linear and unitary groups.
	\newblock Proc. Steklov Inst. Math. \textbf{267}(suppl. 1), S100--S110 (2009).
	\newblock \doi{10.1134/S0081543809070104}.
	\newblock \urlprefix\url{https://doi.org/10.1134/S0081543809070104}
	
	\bibitem{b:Lander}
	Lander, E.S.: Symmetric designs: an algebraic approach, \emph{London
		Mathematical Society Lecture Note Series}, vol.~74.
	\newblock Cambridge University Press, Cambridge (1983).
	\newblock \doi{10.1017/CBO9780511662164}.
	\newblock \urlprefix\url{http://dx.doi.org/10.1017/CBO9780511662164}
	
	\bibitem{a:Liebeck-Odd}
	Liebeck, M.W., Saxl, J.: The primitive permutation groups of odd degree.
	\newblock J. London Math. Soc. (2) \textbf{31}(2), 250--264 (1985).
	\newblock \doi{10.1112/jlms/s2-31.2.250}.
	\newblock \urlprefix\url{https://doi.org/10.1112/jlms/s2-31.2.250}
	
	\bibitem{a:LSS1987}
	Liebeck, M.W., Saxl, J., Seitz, G.: On the overgroups of irreducible subgroups
	of the finite classical groups.
	\newblock Proc. Lond. Math. Soc. \textbf{50}(3), 507--537 (1987).
	\newblock \doi{10.1112/plms/s3-50.3.426}.
	\newblock \urlprefix\url{http://dx.doi.org/10.1112/plms/s3-50.3.426}
	
	\bibitem{a:Mathon-96}
	Mathon, R., Spence, E.: On {$2$}-{$(45,12,3)$} designs.
	\newblock J. Combin. Des. \textbf{4}(3), 155--175 (1996).
	\newblock \doi{10.1002/(SICI)1520-6610(1996)4:3<155::AID-JCD1>3.3.CO;2-L}.
	\newblock
	\urlprefix\url{https://doi.org/10.1002/(SICI)1520-6610(1996)4:3<155::AID-JCD1>3.3.CO;2-L}
	
	\bibitem{t:Regueiro}
	O'Reilly-Regueiro, E.: Flag-transitive symmetric designs.
	\newblock Ph.D. thesis, University of London (2003)
	
	\bibitem{a:Regueiro-classical}
	O'Reilly-Regueiro, E.: Biplanes with flag-transitive automorphism groups of
	almost simple type, with classical socle.
	\newblock J. Algebraic Combin. \textbf{26}(4), 529--552 (2007).
	\newblock \doi{10.1007/s10801-007-0070-7}.
	\newblock \urlprefix\url{http://dx.doi.org/10.1007/s10801-007-0070-7}
	
	\bibitem{a:Regueiro-classification}
	O'Reilly-Regueiro, E.: Classification of flag-transitive symmetric designs.
	\newblock Electronic Notes in Discrete Mathematics \textbf{28}, 535 -- 542
	(2007).
	\newblock \doi{https://doi.org/10.1016/j.endm.2007.01.074}.
	\newblock
	\urlprefix\url{http://www.sciencedirect.com/science/article/pii/S1571065307000765}.
	\newblock 6th Czech-Slovak International Symposium on Combinatorics, Graph
	Theory, Algorithms and Applications
	
	\bibitem{a:Praeger-45-12-3}
	Praeger, C.E.: The flag-transitive symmetric designs with 45 points, blocks of
	size 12, and 3 blocks on every point pair.
	\newblock Des. Codes Cryptogr. \textbf{44}(1-3), 115--132 (2007).
	\newblock \urlprefix\url{https://doi.org/10.1007/s10623-007-9071-8}
	
	\bibitem{a:Praeger-imprimitive}
	Praeger, C.E., Zhou, S.: Imprimitive flag-transitive symmetric designs.
	\newblock J. Combin. Theory Ser. A \textbf{113}(7), 1381--1395 (2006).
	\newblock \doi{10.1016/j.jcta.2005.12.006}.
	\newblock \urlprefix\url{http://dx.doi.org/10.1016/j.jcta.2005.12.006}
	
	\bibitem{a:Saxl2002}
	Saxl, J.: On finite linear spaces with almost simple flag-transitive
	automorphism groups.
	\newblock J. Combin. Theory Ser. A \textbf{100}(2), 322--348 (2002).
	\newblock \doi{10.1006/jcta.2002.3305}.
	\newblock \urlprefix\url{http://dx.doi.org/10.1006/jcta.2002.3305}
	
	\bibitem{a:Seitz-TitsLemma}
	Seitz, G.M.: Flag-transitive subgroups of {C}hevalley groups.
	\newblock Ann. of Math. (2) \textbf{97}, 27--56 (1973)
	
\end{thebibliography}

\end{document}